\documentclass[11pt,reqno]{amsart}
\usepackage{amssymb,mathrsfs,graphicx,subfigure, enumerate}
\usepackage{amsmath,amsfonts,amssymb,amscd,amsthm,bbm}
\usepackage{extpfeil}
\usepackage{graphicx,colortbl}
\usepackage{float}
\usepackage{epsfig}
\usepackage{caption}
\usepackage{bm}
\graphicspath{{Figure/}}

\usepackage[margin=1in]{geometry}

\title[Stability of Outflow Problem of 3D Barotropic Navier--Stokes systems]{Asymptotic stability of viscous shock for the outflow problem of 3D Navier-Stokes system}

\author[Kang]{Moon-Jin Kang}
\address[Moon-Jin Kang]{\newline Department of Mathematical Sciences \newline Korea Advanced Institute of Science and Technology, Daejeon  34141, Republic of Korea}
\email{moonjinkang@kaist.ac.kr}

\author[Kim]{Hyeonseop Kim}
\address[Hyeonseop Kim]{\newline Department of Mathematical Sciences \newline Korea Advanced Institute of Science and Technology, Daejeon  34141, Republic of Korea}
\email{hskim0740@kaist.ac.kr}

\author[Lee]{Hobin Lee}
\address[Hobin Lee]{\newline Department of Mathematical Sciences \newline Seoul National University, Seoul 08826, Republic of Korea}
\email{lcuh11@snu.ac.kr}

\newtheorem{theorem}{Theorem}[section]
\newtheorem{lemma}{Lemma}[section]

\newtheorem{proposition}{Proposition}[section]
\newtheorem{remark}{Remark}[section]

\newcommand{\beq}{\begin{equation}}
\newcommand{\eeq}{\end{equation}}

\newcommand{\bbr}{\mathbb R}

\newcommand{\eps}{\varepsilon }

\newcommand{\bq}{\begin{equation}}
\newcommand{\eq}{\end{equation}}
\newcommand{\e}{\varepsilon}

\newcommand{\pa}{\partial}

\newcommand{\tU}{\tilde{U}}
\newcommand{\tu}{\tilde{u}}

\newcommand{\tv}{\tilde{v}}

\newcommand{\pv}{p(v)}

\newcommand{\tpv}{p(\tilde{v})}

\newcommand{\di}{\displaystyle}

\newcommand{\bu}{\mathbf{u}}

\begin{document}

\date{\today}

\subjclass[2010]{ 76N15, 35B35,   35Q30} 
\keywords{3D barotropic Navier-Stokes equations; viscous shock waves; long-time behavior; $a$-contraction method with shift; outflow problems}

\thanks{\textbf{Acknowledgment.} 
The authors are partially supported by  the National Research Foundation of Korea  (NRF-RS-2024-00361663), and Samsung Science and Technology Foundation under Project Number SSTF-BA2102-01. In addition, the author Hobin Lee is partially supported by BK21 SNU Mathematical Sciences Division.}
\maketitle

\begin{abstract} 
We establish the asymptotic stability of planar viscous shock waves for the
outflow problem of the three-dimensional barotropic compressible
Navier--Stokes equations in a half-space, with periodic boundary conditions
imposed in the transverse directions.
For a weak shock located sufficiently
far from the boundary, we prove that, under  small perturbations in $H^{2}$, the outflow problem admits a unique global-in-time
solution and that  the solution converges uniformly to the viscous shock, up to  a dynamical shift, whose velocity time-asymptotically decays.
This provides the first shock-stability result for the multidimensional
outflow problem.
Our proof combines the $a$-contraction method and higher-order
energy estimates adapted to the half-space boundary.
The boundary trace remaining in the zeroth-order estimate is controlled by
differentiating the perturbation system in the tangential directions and
exploiting the favorable sign of the outflow flux.
The highest-order normal derivatives are then recovered from the time and
tangential derivatives through the momentum equation, while the boundary
terms generated by the dynamical shift are controlled using the exponential
decay of the viscous shock tail.
\end{abstract}

\tableofcontents

\section{Introduction}\label{sec:1}
\setcounter{equation}{0}

We study the initial-boundary value problem for the three-dimensional barotropic compressible Navier-Stokes system:
\begin{equation}  \label{main}
\begin{cases} \partial_t\rho+\operatorname{div}_x(\rho \mathbf{u})=0, \qquad \qquad t>0, \ x\in  \mathbb{R}_+ \times \mathbb{T}^2,&  \\
\partial_t(\rho \mathbf{u})+\operatorname{div}_x(\rho \mathbf{u} \otimes \mathbf{u})+\nabla_xp(\rho)=\mu\Delta_x\bu+(\mu+\lambda)\nabla_x \operatorname{div}_x\bu,& \\
(\rho, \mathbf{u})|_{t=0}=(\rho_0(x), \mathbf{u}_0(x)),& \\ 
\mathbf{u}|_{x_1=0}= \mathbf{u}_-, \qquad \mathbf{u}_-=(u_-, 0, 0)\in \mathbb{R}^3.
\end{cases}
\end{equation}
In this system, $\rho=\rho(t,x)\in\mathbb{R}_+$ and $\mathbf{u}=\mathbf{u}(t,x)=(u_1,u_2,u_3)(t,x)\in\mathbb{R}^3$ are the mass density and the velocity field of a fluid, respectively. Under the barotropic condition, the pressure $p$ depends only on the density $\rho$ and follows the $\gamma$-law equation of state: $p(\rho)=b\rho^{\gamma}$ for constants $b>0$ and $\gamma>1$.
 In addition, $\mu$ and $\lambda$ are the viscosity coefficients that satisfy the physical constraints 
\[
\mu>0, \quad 2\mu+3\lambda\ge 0.
\]
Furthermore, the time variable is $t \in \mathbb{R}_+$, and the spatial domain is $\Omega := \mathbb{R}_+ \times \mathbb{T}^2$, where $x=(x_1, x_2, x_3) \in \Omega$ is the spatial variable. Here, the flat torus $\mathbb{T}^2 := \mathbb{R}^2/\mathbb{Z}^2$ naturally corresponds to periodic boundary conditions in the transverse directions. \\
We supplement the system with the far-field condition:
\begin{equation} \label{farfield condition}
(\rho(t,x), \mathbf{u}(t,x)) \to (\rho_{+}, \mathbf{u}_{+}), \quad as \quad x_1 \to +\infty , 
\end{equation}
where $\rho_{+}>0$ and $\mathbf{u}_{+}=(u_{+},0,0)\in \mathbb{R}^3$. 

\noindent Finally, the compatibility conditions are satisfied; that is, the initial data $(\rho_0(x), \mathbf{u}_0(x))$ satisfy both \eqref{main}$_4$ and \eqref{farfield condition}.

It is widely recognized that time-asymptotic profiles are related to the associated Riemann problem for 3D  compressible Euler system:
\begin{align*}
\begin{cases} \partial_t\rho+\operatorname{div}_x(\rho \mathbf{u})=0, \qquad \qquad t>0, \ x\in  \mathbb{R}\times\mathbb{T}^2,&  \\
\partial_t(\rho \mathbf{u})+\operatorname{div}_x(\rho \mathbf{u} \otimes \mathbf{u})+\nabla_xp(\rho)=0,& \\
(\rho, \mathbf{u})|_{t=0}=
\begin{cases}(\rho_-, \mathbf{u}_-),& x_1<0, \\
(\rho_+, \mathbf{u}_+),& x_1>0.
\end{cases}
\end{cases}   
\end{align*}
This problem has two basic types of self-similar solutions, namely, rarefaction waves and shock waves. We focus only on shock waves in this paper. In fact, the planar shock waves are the solution of this 1D systems:
\begin{align*}
\begin{cases} \partial_t\rho+\partial_{x_1}(\rho u_1)=0, \qquad \qquad t>0, \ x_1\in  \mathbb{R},&  \\
\partial_t(\rho u_1)+\partial_{x_1}(\rho u_1^2+p(\rho))=0,& \\
(\rho, u_1)|_{t=0}=
\begin{cases}(\rho_-, u_-),& x_1<0, \\
(\rho_+, u_+),& x_1>0,
\end{cases}
\end{cases}   
\end{align*}
which has a solution of the form:
\begin{equation*}
(\rho, u_1)(t, x_1)=
\begin{cases}
(\rho_-, u_-),& x_1<\sigma t, \\     
(\rho_+, u_+),& x_1>\sigma t.
\end{cases}    
\end{equation*}
Therein, $\sigma$ is determined by Rankine-Hugoniot condition:
\begin{equation}\label{rhoucoorRHcon}
\begin{cases}
-\sigma(\rho_+-\rho_-)+(\rho_+u_+-\rho_-u_-)=0,& \\
-\sigma(\rho_+u_+-\rho_-u_-)+(\rho_+u_+^2-\rho_-u_-^2)+(p(\rho_+)-p(\rho_-))=0.
\end{cases}    
\end{equation}
Moreover, since we consider a 2-shock (an outgoing shock), the following Lax entropy condition holds:
\begin{align}\label{laxcondi}
\begin{cases}
\rho_->\rho_+,& \\u_->u_+.    
\end{cases}   
\end{align}

However, because of the viscous term in equation \eqref{main}$_2$, we are required to incorporate this effect into the aforementioned Euler shock. Its viscous counterpart, called the `viscous shock', is defined in terms of the traveling wave variable $\xi:=x_1-\sigma t$ and satisfies the following ODEs:
\begin{equation} \label{introviscous}
\begin{cases}
-\sigma\tilde{\rho}'+(\tilde{\rho}\tilde{u})'=0, \qquad ':=\frac{d}{d\xi},\\
-\sigma(\tilde{\rho}\tilde{u})'+(\tilde{\rho}\tilde{u}^2)'+p(\tilde{\rho})'=(2\mu+\lambda)\tilde{u}'', \\
(\tilde{\rho}, \tilde{u})(-\infty)=(\rho_-, u_-), \quad (\tilde{\rho}, \tilde{u})(+\infty)=(\rho_+, u_+),
\end{cases}    
\end{equation}
where the conditions \eqref{rhoucoorRHcon} and \eqref{laxcondi} are also satisfied. Pictorially, its profile draws a smooth trajectory connecting smoothly two states $(\rho_-, u_-)$ and $(\rho_+, u_+)$.

In short, our focus is to explore in what manner the solution to \eqref{main}-\eqref{farfield condition} approaches the solution to \eqref{introviscous} as time goes to infinity.

\subsection{Literature Review}
\ 

Viscous shock waves have long been considered important objects of mathematical study. In 1960, Oleinik \cite{ilin1960asymptotic} first established the time-asymptotic stability of shocks in scalar conservation laws. Subsequently, Matsumura and Nishihara \cite{M1985} proved the stability of a single viscous shock for the 1D barotropic Navier-Stokes system under a zero-mass condition. Since then, several studies have extended these results by relaxing the initial restrictions \cite{G86,Liu85,SX93,LZ15}. Recently, Vasseur, Kang, and Wang \cite{KVW23} successfully proved the time-asymptotic stability of a composite wave combining a rarefaction wave and a shock using the method of $a$-contraction with shifts.  Furthermore, in \cite{HKK23}, the authors utilized the same method to establish the stability of a composite wave consisting of two viscous shocks and to remove several prior restrictions.
The $a$-contraction method serves a robust method for the uniform estimates for $L^2$-type perturbations around viscous shock waves. 
This method has been applied to the stability for viscous shocks on the half-space in \cite{han2026convergence,huang2025asymptotic,kang2025asymptotic} (refer also  \cite{eo2025traveling,HKK23,HKKL25,HL25,KV21,KV-Inven,KV-2shock,KVW23,KVW-NSF} for the whole-space setting).  
 Regarding the multi-dimensional case, there are relatively few results compared to 1D. Nevertheless, Zumbrun and his co-authors \cite{zumbrun2001multidimensional,humpherys2017multidimensional} demonstrated the spectral stability of shocks. In more recent works, \cite{wang2022nonlinear,lee4923620long,shi2025stability} established the stability of a single shock or planar composite waves involving viscous shocks. 

Beyond the whole space domain discussed so far, we may also consider spatial domains with boundaries, leading to the study of Initial-Boundary Value Problems (IBVPs). As a starting point, we set the domain to be the 1D half-space $\mathbb{R}_+$. At the boundary $x=0$, a constant state $u(t,0)=u_-$ is given. Depending on the sign of $u_-$, the problem is classified into three cases: 
\begin{align*}
\begin{aligned} 
\begin{cases}
u_->0, \quad inflow \ problem, \\
u_-=0, \quad impermeable \ wall \ problem,   \\
u_-<0, \quad outflow \ problem.
\end{cases}   
\end{aligned}    
\end{align*} 
Here, for the inflow problem, a boundary condition on $\rho$ is additionally given. For each case, the expected time-asymptotic profiles of the barotropic Navier-Stokes system were completely classified by Matsumura \cite{MA99}.

There have been a number of previous works on shock stability for these IBVPs. First, in \cite{MM99} and \cite{HMS03}, the stability of shocks was obtained for the impermeable wall and inflow problems under a zero-mass condition and specific $\gamma$ constraints. Later studies \cite{huang2025asymptotic, han2026convergence} overcame these initial limitations. Specifically, for the inflow problem, the stability of the most generic composite wave combining a boundary layer, a rarefaction wave, and a viscous shock was successfully established in \cite{han2026convergence}. For the outflow problem, however, there were no shock stability results until very recently, when \cite{kang2025asymptotic} obtained the first such result. Even more recently, the authors in \cite{huang2026stability,huang2026long} extended these stability results to the Navier-Stokes-Fourier system.

Second, generalizing the 1D half-space problem, we can also think about multi-dimensional domains such as $\mathbb{R}_+\times \mathbb{T}^2$ or $\mathbb{R}_+\times \mathbb{R}^2$. In \cite{wang2025planar,wang2021large,wang2022long,huang2026asymptotic,kagei2006local,suzuki2021stationary,KK06}, the authors showed a number of stability results for boundary layer and rarefaction waves, but not for shock waves. However, very recently, \cite{chang2025nonlinear} provided the first single shock stability result for the impermeable wall problem under Navier boundary conditions. 

Therefore, our goal in this paper is to obtain the orbital stability of a single shock for the outflow problem. This constitutes the first shock stability result for the outflow problem in the multi-dimensional barotropic Navier-Stokes system. 

\subsection{Main result}

\begin{theorem} \label{thmrho,ucoor}
For a given constant state $(\rho_+, u_+)\in \Psi:=\{(\rho, u)\in\mathbb{R}_+\times \mathbb{R} \ | \ u+\sqrt{p'(\rho)}\ge 0, \quad u<0. \}$, there exist constants $\delta^0, \varepsilon^0>0$ such that the following holds true.

\noindent For any constant state $(\rho_-,u_-)$ satisfying \eqref{rhoucoorRHcon} and \eqref{laxcondi} with
\begin{equation}
|\rho_+-\rho_-|<\delta^0,    
\end{equation}
let $(\tilde{\rho}, \tilde{u})(x_1-\sigma t)$ be the viscous 2-shock wave satisfying \eqref{introviscous} and $\tilde{\mathbf{u}}=(\tilde{u}, 0, 0)$. In addition, let $(\rho_0, \bu_0)$ be any initial data satisfying
\begin{align}
\begin{aligned}\label{smallperturbation}
&\lVert \rho_0-\rho_{+} \rVert_{L^2((\beta, \infty)\times \mathbb{T}^2)}+\lVert \bu_0-\bu_{+} \rVert_{L^2((\beta, \infty)\times\mathbb{T}^2)}\\ 
&+\lVert \rho_0-\rho_{-} \rVert_{L^2((0, \beta)\times \mathbb{T}^2)}+\lVert \bu_0-\bu_{-} \rVert_{L^2((0, \beta)\times\mathbb{T}^2)}+\lVert \nabla_x\rho_0 \rVert_{H^1(\Omega)}+\lVert \nabla_x\bu_0 \rVert_{H^1(\Omega)}<\varepsilon^0.
\end{aligned}
\end{align}
Then the compressible Navier-Stokes system \eqref{main}-\eqref{farfield condition} possesses a unique global-in-time solution $(\rho, \bu)$ in the following sense: there exists Lipschitz continuous function $X(t)$ satisfying

\begin{align}
\begin{aligned} \label{rhoucoordmainthm}  
\rho(t, x)-\tilde{\rho}(x_1-\sigma t-X(t)-\beta)&\in C\left(0, +\infty; H^2(\Omega)\right),\\
\mathbf{u}(t, x)-\mathbf{\tilde{u}}(x_1-\sigma t-X(t)-\beta)&\in C\left(0, +\infty; H^2(\Omega)\right),\\
\nabla_x\left(\rho(t, x)-\tilde{\rho}(x_1-\sigma t-X(t)-\beta)\right)&\in L^2\left(0, +\infty; H^1(\Omega)\right), \\
\nabla_x\left(\mathbf{u}(t, x)-\tilde{\mathbf{u}}(x_1-\sigma t-X(t)-\beta)\right)&\in L^2\left(0, +\infty; H^2(\Omega)\right),\\
\partial_{t}\left(\rho(t, x)-\tilde{\rho}(x_1-\sigma t-X(t)-\beta)\right)&\in L^2(0, +\infty; H^1(\Omega)),\\
\partial_{t}\left(\mathbf{u}(t, x)-\tilde{\mathbf{u}}(x_1-\sigma t-X(t)-\beta)\right)&\in L^2(0, +\infty; H^1(\Omega)).
\end{aligned}
\end{align}

\noindent Furthermore, we obtain the long-time behavior:
\begin{align}
\begin{aligned} \label{rhoucoordmainthm2}  
&\lim\limits_{t\rightarrow +\infty}\sup\limits_{x\in \Omega}\left|\rho(t, x)-\tilde{\rho}(x_1-\sigma t-X(t)-\beta)\right|=0,\\
&\lim\limits_{t\rightarrow +\infty}\sup\limits_{x\in \Omega}\left|\mathbf{u}(t, x)-\tilde{\mathbf{u}}(x_1-\sigma t-X(t)-\beta)\right|=0,
\end{aligned}
\end{align}
where
\begin{equation} \label{derivgrowi}
\lim\limits_{t\to+\infty} |\dot{X}(t)|=0,
\end{equation}
\end{theorem}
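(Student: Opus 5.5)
The plan is the standard continuation argument: local existence, a priori estimates with a dynamical shift, and an extension step. First I would pass to a convenient formulation. The most natural choice is to work with the original variables $(\rho,\bu)$, or with a mixed Lagrangian-type variable in $x_1$ only. I would then define the shift $X(t)$ by the ODE typical of the $a$-contraction method:
\[
\dot X(t)=-\frac{M}{\delta}\int_\Omega a(\xi-X)\,\partial_\xi\tilde\rho(\xi-X)\,(\text{weighted combination of }\rho-\tilde\rho^{X},\ u_1-\tilde u^{X})\,dx,\qquad X(0)=0 .
\]
Here $\xi=x_1-\sigma t-\beta$, and the weight $a$ is monotone with $a-1=O(\lambda/\delta)\,(\tilde p-p_-)$. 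Local existence in $H^2$ for the outflow problem with the compatibility condition is classical (Kagei--Kawashima type). Global existence then follows once I prove an a priori estimate: if
\[
\sup_{[0,T]}\bigl\|(\rho-\tilde\rho^X,\bu-\tilde\bu^X)\bigr\|_{H^2}\le\varepsilon_1,
\]
then the energy $\sup_t\|\cdot\|_{H^2}^2$ plus the dissipation $\int_0^T(\delta|\dot X|^2+G_S+\|\nabla(\cdot)\|_{H^1}^2+\|\nabla\bu\|_{H^2}^2)\,dt$ is bounded by $C(\|\cdot\|_{H^2}^2(0)+e^{-c\delta\beta})$. Here $G_S$ is the good term $\int|\tilde u'|\,|\cdot|^2$.

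\textbf{Zeroth order.} I would compute
\[
\frac{d}{dt}\int_\Omega a\,\eta(U|\tilde U^X)\,dx
\]
for the relative entropy $\eta$. The interior terms are treated exactly as in the whole-space multi-D $a$-contraction results. Averaging over $\mathbb T^2$ reduces the dangerous part to the 1D Poincaré-type inequality on $(0,1)$ of Kang--Vasseur, while the transverse fluctuations are dissipated by the viscosity. The new features are boundary terms at $x_1=0$. Since $\bu=\bu_-$ there, the velocity perturbation equals $u_--\tilde u(-\sigma t-X-\beta)$. This is exponentially small, of order $\delta e^{-c\delta(\beta+\sigma t+X)}$, because $\sigma>0$ for a 2-shock with $u_+<0$ near the sonic line and $|X|\le Ct$ with small constant. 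The remaining boundary flux has the form
\[
-\int_{\mathbb T^2}\rho u_-\,\tfrac{|\cdot|^2}{2}\,dx'
\]
with $u_-<0$, so it has a favorable sign. What remains is a trace of the density perturbation times small or decaying factors. I would control it by the favorable outflow term plus the trace inequality, and use $\beta$ large to make all shock-tail contributions $\int_0^\infty e^{-c\delta(\beta+\sigma t)}dt$ small.

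\textbf{Higher order.} Tangential derivatives $\partial_{x_2},\partial_{x_3}$ and $\partial_t$ commute with the boundary condition, since $\partial_t\bu=0$ and $\partial_{x'}\bu=0$ on $x_1=0$. So their $L^2$ and $H^1$ energy estimates produce only boundary terms with the good outflow sign. Normal derivatives of $\rho$ are obtained by the standard effective-viscous-flux trick: combining the continuity and momentum equations gives a damped transport equation for $\partial_{x_1}\rho$, with damping $p'(\rho)/(2\mu+\lambda)$. Normal second derivatives of $\bu$ come from the momentum equation as an elliptic (Stokes/Lamé) problem with Dirichlet data, controlled through $\partial_t\bu$, the tangential derivatives and $\nabla\rho$. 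Shift terms such as $\dot X\,\partial\tilde U^X$ are bounded by $\delta|\dot X|^2+G_S$, and their boundary contributions by $e^{-c\delta\beta}$.

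\textbf{Main obstacle.} I expect the hardest step to be the boundary trace in the zeroth-order and first-order estimates, in particular the term involving $(p(\rho)-p(\tilde\rho^X))\,u$ and the normal-derivative traces $\partial_{x_1}u_1$ at $x_1=0$. These cannot be integrated away. My first attempt is to rewrite $\partial_{x_1}u_1|_{x_1=0}$ using the momentum equation: since $\partial_t\bu=0$ and $\partial_{x'}\bu=0$ there, the boundary equation expresses $(2\mu+\lambda)\partial_{x_1}^2u_1$ through $\nabla p$. I would then use the outflow sign to absorb the $\rho$-trace.

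\textbf{Conclusion.} With the uniform bounds in hand, the time-integrability of
\[
\frac{d}{dt}\bigl\|\nabla(\cdot)\bigr\|_{L^2}^2
\]
together with $\int_0^\infty\|\nabla(\cdot)\|^2\,dt<\infty$ gives $\|\nabla(\cdot)\|_{L^2}\to0$. Gagliardo--Nirenberg interpolation in 3D, $\|f\|_\infty\le C\|f\|_{H^1}^{1/2}\|\nabla f\|_{H^1}^{1/2}$ adapted to the half-space, then gives the uniform convergence \eqref{rhoucoordmainthm2}. Finally, $|\dot X|\le C\|(\cdot)\|_{L^\infty}$ yields \eqref{derivgrowi}.
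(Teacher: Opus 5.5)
Your overall architecture matches the paper's: continuation, $a$-contraction with a shift, tangential derivatives exploiting the outflow sign, a damped transport equation for $\partial_{x_1}\phi$, and recovery of normal derivatives of $\psi$ from the momentum equation. However, the step that carries your top-order estimate fails as stated.

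You treat $\partial_t$ like $\partial_{x'}$ because $\partial_t\mathbf u=0$ on $\{x_1=0\}$. That holds for $\mathbf u$, but not for the perturbation $\psi=\mathbf u-\tilde{\mathbf u}^{X,\beta}$ you must estimate: $\psi_t|_{x_1=0}=(\sigma+\dot X)\,\tilde{\mathbf u}'(-\sigma t-X-\beta)\neq0$. The $\partial_t$-energy identity therefore contains the viscous boundary term $-(2\mu+\lambda)\int_{\mathbb T^2}v\,\psi_{1t}\,\partial_{x_1}\psi_{1t}\,dx'\big|_{x_1=0}$. This term has no sign, and it involves the trace of $\partial_{x_1}\psi_{1t}$, which needs $\nabla^2\psi_t$ and lies outside your energy class. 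The exponential smallness of $\psi_{1t}|_{x_1=0}$ alone does not help. Switching to $\partial_t\mathbf u$, which does vanish on the boundary, is not an option either: $\|\partial_t\tilde{\mathbf u}^{X,\beta}\|\approx\sigma\|\tilde u'\|\sim\delta^{3/2}$ is constant in time, so the resulting sources and dissipation are not time-integrable. Moreover, differentiating the perturbation system in $t$ creates sources $\ddot X\,\tilde v_{x_1}$ and $\ddot X\,\tilde{\mathbf u}_{x_1}$, and nothing in your scheme controls $\ddot X$ (the ODE only makes $X$ Lipschitz). This step cannot be skipped, because your Lam\'e recovery of $\sup_t\|\nabla^2\psi\|$ and $\int_0^T\|\nabla^3\psi\|^2$ requires $\sup_t\|\psi_t\|$ and $\int_0^T\|\nabla\psi_t\|^2$. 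The paper supplies both missing ingredients. It differentiates the shift ODE to get $|\ddot X|^2\le C(|\dot X|^2+\delta\|\phi_t\|^2+\delta^3\mathcal G^S)$ (Lemma~\ref{L:21}). It then integrates the bad boundary term by parts in time along $\{x_1=0\}$, writing $\psi_{1t}\partial_{x_1}\psi_{1t}=\partial_t(\psi_{1t}\psi_{1x_1})-\psi_{1tt}\psi_{1x_1}$. Here $\psi_{tt}|_{x_1=0}=\ddot X\tilde{\mathbf u}'-(\sigma+\dot X)^2\tilde{\mathbf u}''$ is explicit and of size $O(\delta^2e^{-c\delta\beta})$, so only traces of $\psi_{1x_1}$ remain and \eqref{eq:treatbd} applies (Lemma~\ref{L:22}). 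Your ``main obstacle'' remedy addresses $\partial_{x_1}u_1|_{x_1=0}$, not this term. That trace is harmless anyway, since it always multiplies exponentially small boundary values and is bounded via \eqref{eq:treatbd}.

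The zeroth-order step is also underspecified. In $(\rho,\mathbf u)$ the only dissipation is $\|\nabla\psi\|^2$. The Kang--Vasseur Poincar\'e argument, by contrast, is run on $w=p(v)-p(\tilde v)$ (Lemma~\ref{lem-poin}, on $[y_0,1]\times\mathbb T^2$ in the half-space) and consumes $\mathcal D=\int|\nabla_x(p(v)-p(\tilde v))|^2$. Its tangential part, of size $\delta^{-1}\int|\nabla_{x'}w|^2/(y(1-y))$, is also what removes the transverse density fluctuations; in $(\rho,\mathbf u)$ there is no such term. The whole-space multi-D results you propose to copy get $\mathcal D$ from the effective velocity $\mathbf h=\mathbf u-(2\mu+\lambda)\nabla_xv$, and so does the paper. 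Staying in $(\rho,\mathbf u)$ would need a different key inequality (a Poincar\'e argument on the velocity, with the density slaved to it), which you do not set up.

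Once you use $\mathbf h$, the boundary terms are no longer just a density trace times small factors. Besides $\mathcal P_1,\mathcal P_2\le0$ and the exponentially small $\mathcal P_3$, the curl--curl part of $R$ produces $\mathcal B_{8,2,2,2}$. This is a product of the traces of $\nabla_{x'}\phi$ and $\partial_{x_1}\psi'$ with an $O(1)$ coefficient, and neither factor is small. The paper bounds it by $C_\nu\int_{\mathbb T^2}|\nabla_{x'}\phi|^2\big|_{x_1=0}dx'+\nu\|\nabla\psi\|_{H^1}^2$. It then controls that trace through the separate tangential estimate of Lemma~\ref{L:bdesti}, where the outflow sign yields $\int_0^t\|\phi_z|_{x_1=0}\|_{L^2(\mathbb T^2)}^2$ with only initial data and $O(\varepsilon+\delta^2)$ terms on the right. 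Your plan has the tangential estimate but not this feedback into the zeroth order.

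Finally, $\|f\|_{L^\infty}\le C\|f\|_{H^1}^{1/2}\|\nabla f\|_{H^1}^{1/2}$ does not give decay, since $\|\nabla^2 f\|$ is only bounded. Use Lemma~\ref{infty interpolation inequality lemma} instead, which needs only $\|\nabla f\|\to0$.
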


\begin{remark}

The constant shift $\beta>0$ and the dynamical shift $X(t)$ play
distinct roles. We choose
$\beta$ so that $\delta\beta$ is sufficiently large. This places the
initial shock front sufficiently far from the boundary and, by the
exponential decay of the viscous shock profile, makes the discrepancy
between the prescribed boundary data and the boundary trace of the
shifted profile exponentially small, of order $e^{-c\delta\beta}$.
Such boundary errors can therefore be absorbed into the energy
estimates.\\
The dynamical shift $X(t)$ contributes the time-modulation of shock, which will be defined by the a-contraction method.  
The asymptotic behavior \eqref{derivgrowi}
implies that $X(t)=o(t)$, so the modulated shock has the same
asymptotic propagation speed $\sigma$ as the unshifted profile.
However, we do not prove that $X(t)$ converges as $t\to\infty$.
Thus, our result establishes orbital convergence to the family of
spatial translates of the viscous shock, rather than convergence to
a single fixed translate.

\end{remark}


\subsection{Idea of proof}

The proof of our main theorem is reduced to establishing the a priori
estimate in Proposition~\ref{prop:main}. 
For the zeroth-order estimate, we employ the method of
$a$-contraction with shifts. To this end, we introduce the effective
velocity
\[
h:=u-(2\mu+\lambda)\nabla_x v,
\]
motivated by the BD entropy structure
\cite{bresch2003existence,bresch2003some,BD06}.
Rewriting the system in the $(v,h)$-coordinates yields a diffusion
term in the equation for $v$, which is unavailable in the original
$(v,u)$-coordinates. This additional dissipation is crucial for
controlling the nonlinear pressure perturbation and the terms
generated by the viscous shock profile. 

A new difficulty in the present multidimensional half-space problem
is that the zeroth-order estimate leaves a boundary trace of the form
\[
\int_{\mathbb T^2}
\left|
\nabla_{x'}(v-\widetilde v)
\right|^2
\bigg|_{x_1=0}
\,dx',
\qquad x'=(x_2,x_3).
\]
 We therefore estimate
this term independently by differentiating the perturbation system
in the tangential directions. Since the prescribed velocity satisfies
the outflow condition $u_-<0$, the transport flux through the boundary
has a favorable sign and produces a boundary dissipation term for the
tangential derivatives. This provides precisely the trace estimate
required to close the zeroth-order energy estimates.

The higher-order estimates require a separate treatment of the normal
and tangential directions (see \cite{KK06}). In particular, a direct top-order energy
estimate in the normal direction would generate boundary traces that
are not directly controllable. Instead, we first estimate the temporal
and tangential derivatives and then recover the pure normal derivatives
from the momentum equation.  Schematically, writing
\[
\phi:=v-\widetilde v,
\qquad
\psi:=u-\widetilde u,
\]
we obtain from the perturbation system~\eqref{eq:peq}
\[
\partial_{x_1}^2\psi
\sim \psi_t\,+\,\text{tangential and l.o.t.},
\qquad
\partial_{x_1}^3\psi
\sim \partial_{x_1}\psi_t
+\text{tangential and l.o.t.}.
\]
Thus, the principal remaining estimates are
\[
\partial_t(\phi,\psi)
\in L^\infty(0,T;L^2(\Omega)),
\qquad
\nabla_x\psi_t
\in L^2(0,T;L^2(\Omega)).
\]

Differentiating the perturbation system with respect to time introduces
the second derivative $\ddot X$ of the dynamical shift. We control this
quantity by differentiating the ODE defining $X(t)$ and using the
lower-order dissipation estimates, which yields
\[
\ddot X\in L^2(0,T).
\]
Another delicate term arises from the boundary contribution in the
time-differentiated energy identity, which contains a term of the form
\[
\int_{\mathbb T^2}
\psi_{1t}\,\partial_{x_1}\psi_{1t}
\bigg|_{x_1=0}
\,dx'.
\]
Since the prescribed boundary velocity is independent of time,
the boundary values of $\psi_t$ and $\psi_{tt}$ are determined by
the derivatives of the shifted shock profile. We therefore integrate
by parts in time on the boundary, transferring one time derivative
and exploiting the exponential decay of the shock tail near
$x_1=0$. The resulting terms are controlled by the estimate of
$\ddot X$ and by the exponentially small boundary mismatch generated
by the large constant shift $\beta$.

The estimates for
\[
\partial_t(\phi,\psi),\qquad
\nabla_x\psi_t,\qquad
\nabla_x^2\phi,\qquad
\nabla_x^3\psi
\]
are mutually coupled and are closed simultaneously through
Lemmas~\ref{L:22}--\ref{L:24}. Combining these estimates with the
zeroth- and lower-order estimates yields Proposition~\ref{prop:main}.

\section{Reformulation of the main result}\label{sec:2}
\setcounter{equation}{0}

To address the transverse perturbation of the viscous planar shock, we use the Helmholtz Decomposition \cite{wang2022nonlinear, lee4923620long}, given by 
\[\Delta_{x}\mathbf{u}=\nabla_x\operatorname{div}_x\mathbf{u}- \nabla_{x}\times \left(\nabla_{x}\times \mathbf{u}\right),\]
where $\nabla_x\operatorname{div}_x\mathbf{u}$ denotes the irrotational part of $\Delta \mathbf{u}$ and $\nabla\times (\nabla\times \mathbf{u})$ denotes the rotational part.

We use the above decomposition and the specific volume $v=\frac{1}{\rho}$ to rewrite the system \eqref{main} into 
(as in \cite{MA99} for 1D case)
\begin{equation} \label{changed main}
\begin{cases}
\rho\left( \partial_tv+\mathbf{u}\cdot \nabla_{x}v \right)=\operatorname{div}_{x}\mathbf{u}, \\
\rho\left( \partial_t\mathbf{u}+\mathbf{u}\nabla_{x}\mathbf{u} \right)+\nabla_{x}p(v)=(2\mu+\lambda)\nabla_{x}\operatorname{div}_{x}\mathbf{u}-\mu \nabla_{x}\times \left(\nabla_{x}\times \mathbf{u}\right), \\
(v, \mathbf{u})|_{t=0}=(v_0(x), \mathbf{u}_0(x)), \\
\mathbf{u}|_{x_1=0}=(u_-,0, 0),
\end{cases}
\end{equation}
together with the far-field conditions
\beq \label{nini}
(v(t, x), \mathbf{u}(t,x)) \to (v_{+}, \mathbf{u}_{+}), \quad as \quad x_1 \to +\infty , 
\eeq
where $p(v)=v^{-\gamma}$. \\

Recall the viscous 2-shock equations in Eulerian Coordinates (\cite{wang2022nonlinear, lee4923620long}):
\begin{equation} \label{original viscous shock equation}
\begin{cases}
-\sigma\tilde{\rho}'+(\tilde{\rho}\tilde{u})'=0, \qquad ':=\frac{d}{d\xi},\\
-\sigma(\tilde{\rho}\tilde{u})'+(\tilde{\rho}\tilde{u}^2)'+p(\tilde{\rho})'=(2\mu+\lambda)\tilde{u}'', \\
(\tilde{\rho}, \tilde{u})(-\infty)=(\rho_-, u_-), \quad (\tilde{\rho}, \tilde{u})(+\infty)=(\rho_+, u_+).
\end{cases}    
\end{equation}

Introducing $\tilde{v}:=1/\tilde{\rho}$, the first two equations in
\eqref{original viscous shock equation} can be rewritten as follows:
\begin{equation} \label{changed viscous shock equation}
\begin{cases}
\tilde{\rho}\left(-\sigma\tilde{v}'+\tilde{u}\tilde{v}'\right)=\tilde{u}', \\
\tilde{\rho}\left(-\sigma\tilde{u}'+\tilde{u}\tilde{u}'\right)
+p(\tilde{v})'=(2\mu+\lambda)\tilde{u}'',
\end{cases}
\end{equation}
where $p(\tilde{v})=\tilde{v}^{-\gamma}$. 

Integrating \eqref{original viscous shock equation}$_1$ on $(-\infty, x_1-\sigma t)$, we have
\begin{equation} \label{changed RH condition}
-\sigma\tilde{\rho}+\tilde{\rho}\tilde{u}=-\sigma\rho_-+\rho_-u_{-}.
\end{equation}

Let 
\begin{equation} \label{changed shock speed}
\sigma^* := \sigma\rho_- - \rho_-u_{-}.
\end{equation}

From \eqref{changed RH condition} and \eqref{changed shock speed}, the equations \eqref{changed viscous shock equation} and the far-field conditions can be rewritten as
\begin{equation} \label{change shock equation 1}
\begin{cases}
-\sigma^*\tilde{v}'=\tilde{u}',\\ 
-\sigma^*\tilde{u}'+p(\tilde{v})'=(2\mu+\lambda)\tilde{u}'',
\end{cases}    
\end{equation}
and
\begin{equation} \label{change shock equation 2}
(\tilde{v},\tilde{u})(-\infty)=(v_-,u_-), \quad (\tilde{v},\tilde{u})(+\infty)=(v_+,u_+), \quad v_-=\frac{1}{\rho_-}, \  v_+=\frac{1}{\rho_+}.
\end{equation}

Integrating \eqref{change shock equation 1} with respect to $\xi$, we have
\begin{equation} \label{changed RH condition 2}    
\begin{cases} 
-\sigma^*(v_+-v_-)=u_{+}-u_{-}, \\
-\sigma^*(u_{+}-u_{-})+p(v_+)-p(v_-)=0.
\end{cases}
\end{equation}
Since our viscous shock is 2-shock, \eqref{changed RH condition 2} yields
 \beq\label{sspeed}
 \sigma^*=\sqrt{-\frac{p(v_+)-p(v_-)}{v_+-v_-}}.
\eeq

In the remaining part of the paper, we will prove the following theorem that is stated in terms of the volume variable. Of course, Theorem \ref{thmrho,ucoor} obviously implies Theorem \ref{thmv,ucoor}, since we deal with small perturbations in $L^\infty ((0,\infty)\times\Omega)$ and so the values of $v$ and $\rho$ stay near the reference point.

\begin{theorem} \label{thmv,ucoor}
For a given constant state $(\rho_+, u_+)\in \Psi$ with $v_+=\frac{1}{\rho_+}$, there exist constants $\delta_0, \varepsilon_0>0$ such that the following holds true.

\noindent For any constant state $(v_-,u_-)$ satisfying \eqref{changed RH condition 2} and \eqref{sspeed} with
\begin{equation}
|v_+-v_-|<\delta_0,    
\end{equation}
let $(\tilde{v}, \tilde{u})(x_1-\sigma t)$ be the viscous 2-shock wave satisfying \eqref{changed viscous shock equation} and $\tilde{\mathbf{u}}=(\tilde{u}, 0, 0)$. In addition, let $(v_0, \bu_0)$ be any initial data satisfying
\begin{align*}
&\lVert v_0-v_{+} \rVert_{L^2((\beta, \infty)\times \mathbb{T}^2)}+\lVert \bu_0-\bu_{+} \rVert_{L^2((\beta, \infty)\times\mathbb{T}^2)}\\ 
&+\lVert v_0-v_{-} \rVert_{L^2((0, \beta)\times \mathbb{T}^2)}+\lVert \bu_0-\bu_{-} \rVert_{L^2((0, \beta)\times\mathbb{T}^2)}+\lVert \nabla_xv_0 \rVert_{H^1(\Omega)}+\lVert \nabla_x\bu_0 \rVert_{H^1(\Omega)}<\varepsilon_0.
\end{align*}
Then the compressible Navier-Stokes system \eqref{changed main}-\eqref{nini} possesses a unique global-in-time solution $(v, \bu)$ in the following sense: there exists Lipschitz continuous function $X(t)$ satisfying

\begin{align}
\begin{aligned} \label{thmstate1}  
v(t, x)-\tilde{v}(x_1-\sigma t-X(t)-\beta)&\in C\left(0, +\infty; H^2(\Omega)\right),\\
\mathbf{u}(t, x)-\mathbf{\tilde{u}}(x_1-\sigma t-X(t)-\beta)&\in C\left(0, +\infty; H^2(\Omega)\right),\\
\nabla_x\left(v(t, x)-\tilde{v}(x_1-\sigma t-X(t)-\beta)\right)&\in L^2\left(0, +\infty; H^1(\Omega)\right), \\
\nabla_x\left(\mathbf{u}(t, x)-\tilde{\mathbf{u}}(x_1-\sigma t-X(t)-\beta)\right)&\in L^2\left(0, +\infty; H^2(\Omega)\right),\\
\partial_{t}\left(v(t, x)-\tilde{v}(x_1-\sigma t-X(t)-\beta)\right)&\in L^2(0, +\infty; H^1(\Omega)),\\
\partial_{t}\left(\mathbf{u}(t, x)-\tilde{\mathbf{u}}(x_1-\sigma t-X(t)-\beta)\right)&\in L^2(0, +\infty; H^1(\Omega)).
\end{aligned}
\end{align}

\noindent Furthermore, we obtain the long-time behavior:
\begin{align}
\begin{aligned} \label{thmstate2}
&\lim\limits_{t\rightarrow +\infty}\sup\limits_{x\in \Omega}\left|v(t, x)-\tilde{v}(x_1-\sigma t-X(t)-\beta)\right|=0,\\
&\lim\limits_{t\rightarrow +\infty}\sup\limits_{x\in \Omega}\left|\mathbf{u}(t, x)-\tilde{\mathbf{u}}(x_1-\sigma t-X(t)-\beta)\right|=0,
\end{aligned}
\end{align}
where
\begin{equation} \label{as}
\lim\limits_{t\rightarrow +\infty}\dot{X}(t)=0.
\end{equation}
\end{theorem}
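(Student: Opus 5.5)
The plan is the standard continuation argument: combine local well-posedness with a uniform a priori estimate, then extract the long-time behavior from the resulting time-integrability.

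\textbf{Setting up the perturbation and the shift.} Set $\phi=v-\tilde v^{X,\beta}$ and $\psi=\bu-\tilde{\bu}^{X,\beta}$, where $\tilde v^{X,\beta}(t,x)=\tilde v(x_1-\sigma t-X(t)-\beta)$. The shift $X(t)$ solves an ODE of $a$-contraction type,
\[
\dot X(t)=-\frac{M}{\delta}\int_{\Omega} a\,\bigl(\tilde h^{X,\beta}_{x_1}\,(h_1-\tilde h^{X,\beta}) - \tilde v^{X,\beta}_{x_1}\,(p(v)-p(\tilde v^{X,\beta}))\bigr)\,dx, \qquad X(0)=0,
\]
where $h$ is the effective velocity and $a$ is the weight, monotone in the shock variable. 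Its right-hand side is Lipschitz in $X$ as long as the solution stays in $H^2$, so $X$ exists whenever the solution does. Local existence of $(v,\bu)$ in the stated class on $[0,T_0]$ follows from the standard IBVP theory for the outflow problem, as in \cite{KK06}. The compatibility conditions and the boundary mismatch $|\tilde u(-\sigma t-X-\beta)-u_-|\lesssim \delta e^{-c\delta(\beta+\sigma t)}$ enter only as forcing terms; these are exponentially small and integrable in time because $\sigma>0$ for the 2-shock.

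\textbf{A priori estimate and continuation.} Next I prove Proposition~\ref{prop:main}: if $\sup_{[0,T]}\|(\phi,\psi)\|_{H^2}\le\varepsilon_1$, then
\[
\sup_{[0,T]}\|(\phi,\psi)\|_{H^2}^2+\delta\int_0^T|\dot X|^2+\int_0^T\bigl(\mathcal G+\|\nabla\phi\|_{H^1}^2+\|\nabla\psi\|_{H^2}^2+\|\partial_t(\phi,\psi)\|_{H^1}^2\bigr)+\int_0^T|\ddot X|^2 \le C\bigl(\|(\phi_0,\psi_0)\|_{H^2}^2+e^{-c\delta\beta}\bigr).
\]
The steps follow the outline in the introduction.
\begin{enumerate}
\item The zeroth-order weighted relative entropy estimate is done in the $(v,h)$ variables via the $a$-contraction method and a Poincaré-type inequality on the shock layer. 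It leaves the boundary trace $\int_{\mathbb T^2}|\nabla_{x'}\phi|^2|_{x_1=0}\,dx'$.
\item This trace is absorbed by the tangentially differentiated estimate, which has the good sign because $u_-<0$.
\item The estimates for $\partial_t$ and the tangential derivatives follow, with $\ddot X$ controlled by differentiating the ODE above.
\item The normal derivatives are recovered from the momentum equation through elliptic-type relations, and $\nabla^2\phi$ from the mass equation by the usual effective-viscous-flux damping.
\end{enumerate}
The key smallness conditions are $\delta_0$, $\varepsilon_0$, and $e^{-c\delta\beta}$ small. A continuity argument then extends the solution globally and gives the regularity in \eqref{thmstate1}. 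Uniqueness follows from an energy estimate on the difference of two solutions in the same class.

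\textbf{Long-time behavior.} Set $g(t)=\|\nabla(\phi,\psi)\|_{L^2}^2$. Then $g\in L^1(0,\infty)$, and using the equations $|g'|\lesssim \|\nabla(\phi,\psi)\|_{H^1}^2+\|\nabla\partial_t(\phi,\psi)\|^2$, so $g'\in L^1$ as well. Hence $g(t)\to0$. Gagliardo--Nirenberg on $\mathbb R_+\times\mathbb T^2$ gives
\[
\|f\|_{L^\infty}\lesssim \|f\|_{H^1}^{1/2}\|\nabla f\|_{H^1}^{1/2}\quad\text{or}\quad \|\nabla f\|^{1/2}\|\nabla^2 f\|^{1/2},
\]
modulo the zero mode, which is handled by a 1D Agmon inequality on the $x'$-average. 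This yields \eqref{thmstate2}. For \eqref{as}, the ODE gives $|\dot X|\lesssim \|(\phi,\psi)\|_{L^\infty}$ weighted against $|\tilde v'|$, so $\dot X\to0$.

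\textbf{Main obstacle.} I expect the hardest step to be the time-differentiated energy estimate, specifically the boundary term $\int_{\mathbb T^2}\psi_{1t}\partial_{x_1}\psi_{1t}\,dx'|_{x_1=0}$. There $\psi_t|_{x_1=0}=(\sigma+\dot X)\tilde u'$ is small only through the exponential tail, while the normal derivative $\partial_{x_1}\psi_{1t}$ is not controlled on the boundary. I would first replace $\partial_{x_1}\psi_{1t}$ using the momentum equation, then integrate by parts in time on the boundary, and bound the result using $\ddot X\in L^2$ and the factor $e^{-c\delta\beta}$.
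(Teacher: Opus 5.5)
Your proposal is correct and follows essentially the paper's route: local existence plus the uniform a priori estimate of Proposition~\ref{prop:main}, built from the zeroth-order $a$-contraction estimate, the outflow-signed tangential trace estimate, the time-derivative estimate with $\ddot X\in L^2$, and recovery of normal derivatives, closed by the standard continuation argument. The long-time behavior \eqref{thmstate2}--\eqref{as} then follows from $\|\nabla_x(\phi,\psi)\|^2\in W^{1,1}(0,\infty)$, the interpolation inequality of Lemma~\ref{infty interpolation inequality lemma}, and the ODE bound on $\dot X$; this is exactly the standard argument the paper defers to \cite{KVW23, HKK23}.

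The one superfluous step is your proposed substitution via the momentum equation in the boundary term $\int_{\mathbb T^2}\psi_{1t}\,\partial_{x_1}\psi_{1t}\,dx'$, which in any case would not produce $\partial_{x_1}\psi_{1t}$. The paper instead integrates by parts in time directly. It uses that $\psi_{1t}$ and $\psi_{1tt}$ on $\{x_1=0\}$ are explicit in terms of $\tilde u'$, $\tilde u''$, $\dot X$ and $\ddot X$, and it bounds $\psi_{1x_1}|_{x_1=0}$ by the trace inequality.
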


\section{Preliminaries}
\setcounter{equation}{0}

In this section, we present several analytical tools which play useful roles in the proof.

\subsection{Estimates on viscous shock}

Note that the ODE system \eqref{change shock equation 1} with \eqref{change shock equation 2}-\eqref{sspeed} shares the same structure as that of viscous shocks in Lagrangian mass coordinates. Consequently, the following lemma regarding the properties of viscous shocks follows directly from previous results \cite{MN85,KV21}. 
\begin{lemma}\label{lem:shock-est}
	Fix any given state $U_{+}:=(v_+, u_+)\in \mathbb{R}_+\times \mathbb{R}$. Then there exist a constant $C>0$ such that the following holds: 
  For every end state $U_-:=(v_-, u_-)$ connected with $U_+$ via 2-shock curve, there exists a unique solution $(\tilde{v}, \tilde{u})(\xi)$ joining from $U_-$ to $U_+$ satisfying $\tilde{v}(0)=\frac{v_-+v_+}{2}$ without loss of generality, where $\xi:=x_1-\sigma t$. Let $\delta:=|p(v_-)-p(v_+)|$.
Then the following estimates holds:
\[|\tilde{v}(\xi)-v_{\pm}|, |\tilde{u}(\xi)-u_{\pm}|\le C\delta e^{-C\delta|\xi|}, \quad \pm\xi>0,\]
\[
\tilde{v}'\sim \tilde{u}' \quad i.e., \quad \frac{|\tilde{v}'(\xi)|}{C}\le |\tilde{u}'(\xi)|\le C|\tilde{v}'(\xi)|, \quad for \ \ \xi\in \mathbb{R},
\]
\[
\frac{\delta^2e^{-C\delta|\xi|}}{C}\le \tilde{v}'(\xi)\le C\delta^2e^{-C\delta|\xi|},
\] 
in addition, 
\[
|(\tilde{v}''(\xi),\tilde{u}''(\xi))|\le C\delta|(\tilde{v}'(\xi),\tilde{u}'(\xi))|,
\]
\[
|(\tilde{v}'''(\xi),\tilde{u}'''(\xi))|\le C\delta^2|(\tilde{v}'(\xi), \tilde{u}'(\xi))|.
\]
\end{lemma}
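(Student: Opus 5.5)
The plan is to reduce the ODE system \eqref{change shock equation 1}--\eqref{change shock equation 2} to a single scalar first-order ODE for $\tilde v$ and read off all the estimates from it. Integrating \eqref{change shock equation 1} once from $+\infty$ and using the first equation gives $\tilde u-u_+=-\sigma^*(\tilde v-v_+)$, so $\tilde u'=-\sigma^*\tilde v'$. Substituting into the second equation and integrating gives
\[
(2\mu+\lambda)\sigma^*\,\tilde v'=\big(p(\tilde v)-p(v_+)\big)+(\sigma^*)^2(\tilde v-v_+)=:F(\tilde v).
\]
By \eqref{changed RH condition 2}--\eqref{sspeed}, $F(v_\pm)=0$. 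Since $p$ is strictly convex, $F$ is convex with exactly these two zeros, and $F$ has a fixed sign between them. For the 2-shock, $v_->v_+$ when written in the volume variable, and on $(v_+,v_-)$ the profile satisfies $\tilde v'>0$, matching the sign in the statement.

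Existence and uniqueness then follow from the standard phase-line argument. The scalar autonomous ODE $\tilde v'=F(\tilde v)/((2\mu+\lambda)\sigma^*)$ has a unique heteroclinic orbit joining the two equilibria, and the normalization $\tilde v(0)=(v_-+v_+)/2$ removes the translation invariance. One then recovers $\tilde u$ from $\tilde u-u_+=-\sigma^*(\tilde v-v_+)$. This relation also gives $|\tilde u'|=\sigma^*|\tilde v'|$, with $\sigma^*$ bounded above and below uniformly for small $\delta$. The same bounds give $|\tilde u-u_\pm|\sim|\tilde v-v_\pm|$.

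The quantitative bounds come from the weak-shock structure of $F$. Because $F$ is convex and vanishes at $v_\pm$, one has $F(v)\approx \tfrac12 p''(v_+)(v-v_+)(v-v_-)$ for $v$ between the endpoints, with comparability constants uniform once $|v_--v_+|\lesssim\delta$ is small. The ODE is then comparable to a logistic equation $w'\sim c(w-v_+)(v_--w)$. Explicit comparison with its solution, a $\tanh$-type profile of amplitude $\sim\delta$ and rate $\sim\delta$, yields two consequences:
\[
|\tilde v-v_\pm|\le C\delta e^{-C\delta|\xi|},\qquad
\tilde v'\sim(\tilde v-v_+)(v_--\tilde v)\sim\delta^2e^{-C\delta|\xi|}.
\]
The upper and lower bounds on $\tilde v'$ come from bounding each factor by $\delta$ on one side and by the exponential tail on the other. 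The exponential rates can be made explicit via $F'(v_\pm)\sim\mp\delta$, obtained by linearizing at each end. This comparison step, which requires keeping every constant uniform in $\delta$, is the main technical point.

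The higher-derivative bounds follow by differentiating the ODE. First,
\[
\tilde v''=\frac{F'(\tilde v)\tilde v'}{(2\mu+\lambda)\sigma^*},\qquad |F'(\tilde v)|\le C\delta,
\]
where the bound on $F'$ holds because $F'$ vanishes at an interior point of an interval of length $\sim\delta$ and $F''=p''$ is bounded. Differentiating once more,
\[
\tilde v'''\propto F''(\tilde v)(\tilde v')^2+F'(\tilde v)\tilde v''=O(\delta^2)\,\tilde v',
\]
using $\tilde v'\le C\delta^2$. The corresponding bounds for $\tilde u$ follow from $\tilde u^{(k)}=-\sigma^*\tilde v^{(k)}$. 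All of this is the content of \cite{MN85,KV21}, since the ODE here has the same form as the Lagrangian viscous shock ODE.
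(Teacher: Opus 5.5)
Your route is essentially the paper's. The paper gives no argument beyond observing that the profile ODE has the structure of the Lagrangian viscous-shock ODE and citing \cite{MN85,KV21}, and your scalar reduction, logistic comparison and differentiation of the ODE are the argument behind those references.

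There is, however, a sign/orientation slip that you need to fix.
\begin{itemize}
\item \emph{The reduced ODE has the wrong sign.} Integrating the second profile equation from $\xi$ to $+\infty$ gives $(2\mu+\lambda)\tilde u'=F(\tilde v)$. Since $\tilde u'=-\sigma^*\tilde v'$, this means $(2\mu+\lambda)\sigma^*\tilde v'=-F(\tilde v)$, not $+F(\tilde v)$.
\item \emph{The ordering of the end states is reversed.} In this paper the Lax condition $\rho_->\rho_+$ gives $v_-<v_+$, not $v_->v_+$. Equivalently, $-\sigma^*(v_+-v_-)=u_+-u_-<0$ with $\sigma^*>0$.
\end{itemize}
As written, these two claims contradict your conclusion. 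Convexity forces $F<0$ strictly between its roots, so your formula gives $\tilde v'<0$. With $v_->v_+$ the profile would then decrease from $v_-$ to $v_+$. That is incompatible with your assertion $\tilde v'>0$ and with the lower bound $\tilde v'\ge \delta^2e^{-C\delta|\xi|}/C$ in the statement.

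With both corrections:
\begin{itemize}
\item the profile takes values in $(v_-,v_+)$;
\item $\tilde v'=-F(\tilde v)/((2\mu+\lambda)\sigma^*)>0$ and $\tilde u'<0$;
\item $F'(v_\pm)\sim\pm\delta$ rather than $\mp\delta$. By Taylor expansion, $F'(v_+)=\tfrac12p''(\zeta)(v_+-v_-)>0$ and $F'(v_-)=-\tfrac12p''(\zeta')(v_+-v_-)<0$ for intermediate points $\zeta,\zeta'$.
\end{itemize}
The rest of your sketch goes through unchanged, because it only uses absolute values. This covers the bound $|F'|\le C\delta$ via Rolle, the two-sided exponential bounds from the logistic comparison, and the estimates on $\tilde v''$ and $\tilde v'''$.
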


\subsection{Useful inequalities}
Here, we present several inequalities that will be useful in the subsequent proofs.

\subsubsection{Sobolev-type inequalities} 
First, we introduce three Sobolev-type inequalities. Lemma \ref{lem-poin} is an extension of the 1D Poincaré-type inequalities in \cite[Lemma 2.9]{KV21} and \cite[Lemma 3.1]{wang2022nonlinear} to the multi-dimensional domain $[\alpha,1]\times\mathbb{T}^2$. Lemma \ref{infty interpolation inequality lemma} gives an interpolation inequality over $\mathbb{R}_+\times \mathbb{T}^2$. Finally, Lemma \ref{GNIsevera} is a straightforward application of the well-known Gagliardo-Nirenberg interpolation inequality.
\begin{lemma} \label{lem-poin}
	For any $f:\left[ \alpha, 1 \right]\times \mathbb{T}^2 \to \mathbb{R}$ and $\alpha\in (0, 1)$ satisfying
\[\int_{\mathbb{T}^2} \int_{\alpha}^{1} \left[ (y_1-\alpha)(1-y_1)\left| \partial_{y_1}f \right|^2+\frac{\left| \nabla_{y'}f \right|^2}{(y_1-\alpha)(1-y_1)} \right]  \, dy_1 \, dy'<\infty,\]
we have
\begin{equation} \label{poincare inequality}
\begin{aligned}
&\int_{\mathbb{T}^2} \int_{\alpha}^{1} \left| f-\bar{f} \right|^2 \, dy_1 \, dy'\le \frac{1}{2} \int_{\mathbb{T}^2} \int_{\alpha}^{1}  (y_1-\alpha)(1-y_1)\left| \partial_{y_1}f \right|^2 \, dy_1 \, dy' \\
& \quad \quad \quad \quad \quad \quad \quad \quad \quad \quad \quad \ +\frac{1}{16\pi^2}\int_{\mathbb{T}^2} \int_{\alpha}^{1} \frac{\left| \nabla_{y'}f \right|^2}{y_1(1-y_1)} \, dy_1 \, dy',
\end{aligned}
\end{equation}
where $\bar{f}:=\frac{1}{(1-\alpha)}\int_{\mathbb{T}^2}\int_{\alpha}^{1}fdy_1dy'$.
\end{lemma}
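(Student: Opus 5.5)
The proof separates the zero transverse Fourier mode, handled by the one-dimensional weighted Poincaré inequality, from the nonzero modes, handled by the tangential gradient term with an elementary pointwise bound on the weight.

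\emph{Fourier decomposition.} Expand $f$ in Fourier series in $y'\in\mathbb{T}^2$:
\[
f(y_1,y')=\sum_{k\in\mathbb{Z}^2} f_k(y_1)\,e^{2\pi i k\cdot y'}.
\]
Since $|\mathbb{T}^2|=1$, the mean is
\[
\bar f=\frac{1}{1-\alpha}\int_\alpha^1 f_0(y_1)\,dy_1=:\bar f_0,
\]
so $\bar f$ involves only the zero mode. By Parseval in $y'$ for each fixed $y_1$, and then integration in $y_1$,
\[
\int_{\mathbb{T}^2}\int_\alpha^1|f-\bar f|^2
=\int_\alpha^1|f_0-\bar f_0|^2\,dy_1+\sum_{k\neq0}\int_\alpha^1|f_k|^2\,dy_1 .
\]
Likewise,
\[
\int_{\mathbb{T}^2}\int_\alpha^1 w\,|\partial_{y_1}f|^2=\sum_k\int_\alpha^1 w\,|f_k'|^2 \;\ge\; \int_\alpha^1 w\,|f_0'|^2,
\]
for any nonnegative weight $w(y_1)$. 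For the tangential gradient,
\[
\int_{\mathbb{T}^2}\int_\alpha^1 \frac{|\nabla_{y'}f|^2}{y_1(1-y_1)}
=\sum_{k\ne0}4\pi^2|k|^2\int_\alpha^1\frac{|f_k|^2}{y_1(1-y_1)} .
\]
The finiteness assumption justifies these manipulations. It controls the second term with the weight $1/(y_1(1-y_1))$, because $y_1(1-y_1)\ge (y_1-\alpha)(1-y_1)$.

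\emph{Zero mode.} Apply the 1D weighted Poincaré inequality of \cite[Lemma 2.9]{KV21}, $\int_0^1|g-\bar g|^2\le\frac12\int_0^1 s(1-s)|g'|^2\,ds$, to $g(s)=f_0(\alpha+(1-\alpha)s)$. Under the change of variables $y_1=\alpha+(1-\alpha)s$:
\begin{itemize}
\item the left side gains a factor $(1-\alpha)$ from $dy_1$;
\item on the right, $(y_1-\alpha)(1-y_1)=(1-\alpha)^2s(1-s)$, $|f_0'|^2=(1-\alpha)^{-2}|g'|^2$ and $dy_1=(1-\alpha)ds$, so it also gains exactly $(1-\alpha)$.
\end{itemize}
Hence the constant $\frac12$ is scale invariant, and
\[
\int_\alpha^1|f_0-\bar f_0|^2\le\frac12\int_\alpha^1(y_1-\alpha)(1-y_1)|f_0'|^2 .
\]

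\emph{Nonzero modes.} For $k\ne0$ we have $|k|^2\ge1$, and $y_1(1-y_1)\le\frac14$ on $(0,1)\supset[\alpha,1]$ (this uses $\alpha>0$). Therefore
\[
\frac{1}{16\pi^2}\cdot 4\pi^2|k|^2\int_\alpha^1\frac{|f_k|^2}{y_1(1-y_1)}\ge \frac{1}{4}\cdot 4\int_\alpha^1|f_k|^2=\int_\alpha^1|f_k|^2 .
\]
Summing over $k\ne0$ controls the nonzero-mode part of $\|f-\bar f\|^2$ by the tangential term. Adding this to the zero-mode estimate gives \eqref{poincare inequality}.

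The only delicate point is checking that the 1D inequality rescales to $[\alpha,1]$ with the same constant $\frac12$; the remaining steps are bookkeeping with Parseval's identity.
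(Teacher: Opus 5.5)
Your proof is correct, but it is organized differently from the paper's. The paper's only justification is a change of variables in the $[0,1]\times\mathbb{T}^2$ inequality of \cite[Lemma 3.1]{wang2022nonlinear}. You instead rescale just the one-dimensional inequality of \cite[Lemma 2.9]{KV21}, and only for the zero transverse mode; your check that the constant $\frac12$ survives $y_1=\alpha+(1-\alpha)s$ is right. You then treat the nonzero modes directly via Parseval, $|k|\ge 1$ and $y_1(1-y_1)\le\frac14$, which gives exactly the constant $\frac{1}{16\pi^2}$.

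The advantage of your route is that it proves the inequality exactly as stated. A literal change of variables $y_1=\alpha+(1-\alpha)s$ in the $[0,1]\times\mathbb{T}^2$ lemma, whose tangential term is $\frac{1}{16\pi^2}\int_{\mathbb{T}^2}\int_0^1\frac{|\nabla_{y'}g|^2}{s(1-s)}\,ds\,dy'$, leaves the torus unscaled. It produces the tangential term
\[
\frac{(1-\alpha)^2}{16\pi^2}\int_{\mathbb{T}^2}\int_{\alpha}^{1}\frac{|\nabla_{y'}f|^2}{(y_1-\alpha)(1-y_1)}\,dy_1\,dy'.
\]
This is not pointwise comparable with the stated $\frac{1}{16\pi^2}\frac{1}{y_1(1-y_1)}$: the ratio of weights is $\frac{(1-\alpha)^2y_1}{y_1-\alpha}$, which blows up as $y_1\to\alpha^+$ and equals $1-\alpha<1$ at $y_1=1$. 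The stated weight $\frac{1}{y(1-y)}$ is the one needed in the proof of Lemma~\ref{mainlemma}, where it must be matched against the good term coming from $\mathcal{D}_2$. So one really has to reopen the argument for the nonzero modes, as you do. The paper's citation route buys only brevity.

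Your argument also yields the version with weight $\frac{1}{(y_1-\alpha)(1-y_1)}$, since that weight is at least $\frac{4}{(1-\alpha)^2}\ge 4$.
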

Since we can easily prove this Lemma by applying a change of variables to \cite[Lemma 3.1]{wang2022nonlinear}, we omit the detailed proof.

\begin{lemma} \label{infty interpolation inequality lemma}
For any $g \in H^2(\Omega)$ where $\Omega=\mathbb{R}_+\times\mathbb{T}^2$, we have
\[ 
\lVert g \rVert_{L^{\infty}(\Omega)} \le C\lVert g \rVert^{\frac{1}{2}}_{L^2(\Omega)}\lVert \partial_{x_1}g \rVert^{\frac{1}{2}}_{L^2(\Omega)} + C\lVert \nabla_xg \rVert^{\frac{1}{2}}_{L^2(\Omega)}\lVert \nabla_x^2g \rVert^{\frac{1}{2}}_{L^2(\Omega)}.
\]
\end{lemma}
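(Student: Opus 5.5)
We split $g$ into its transverse mean and the oscillatory remainder, and estimate each part by a different one-dimensional or three-dimensional argument. Writing $x=(x_1,x')$ with $x'\in\mathbb{T}^2$, set
\[
\bar g(x_1):=\int_{\mathbb{T}^2} g(x_1,x')\,dx', \qquad \tilde g:=g-\bar g .
\]
Then $\lVert g\rVert_{L^\infty}\le \lVert \bar g\rVert_{L^\infty(\mathbb{R}_+)}+\lVert \tilde g\rVert_{L^\infty(\Omega)}$, and we bound the two pieces separately.

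\emph{The mean part.} Since $\bar g\in H^1(\mathbb{R}_+)$ decays at infinity, we have for every $x_1\ge 0$
\[
\bar g(x_1)^2=-2\int_{x_1}^\infty \bar g\,\partial_{x_1}\bar g\,dy_1 .
\]
This gives $\lVert \bar g\rVert_{L^\infty}\le \sqrt2\,\lVert \bar g\rVert_{L^2}^{1/2}\lVert \partial_{x_1}\bar g\rVert_{L^2}^{1/2}$. By Jensen (the torus has unit measure), $\lVert \bar g\rVert_{L^2(\mathbb{R}_+)}\le \lVert g\rVert_{L^2(\Omega)}$ and $\lVert \partial_{x_1}\bar g\rVert_{L^2}\le \lVert\partial_{x_1} g\rVert_{L^2}$. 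This produces the first term on the right-hand side.

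\emph{The oscillatory part.} Here $\tilde g$ has zero mean on each slice $\{x_1\}\times\mathbb{T}^2$. Hence the Poincaré inequality on $\mathbb{T}^2$ gives $\lVert \tilde g\rVert_{L^2(\Omega)}\le C\lVert \nabla_{x'}\tilde g\rVert_{L^2(\Omega)}\le C\lVert \nabla_x g\rVert_{L^2}$. Moreover $\nabla_x\tilde g=\nabla_x g-(\partial_{x_1}\bar g,0,0)$ and $\nabla_x^2\tilde g=\nabla_x^2 g-(\partial_{x_1}^2\bar g)e_1\otimes e_1$, so by Jensen again
\[
\lVert \nabla_x\tilde g\rVert_{L^2}\le 2\lVert\nabla_x g\rVert_{L^2},\qquad \lVert \nabla^2_x\tilde g\rVert_{L^2}\le 2\lVert\nabla^2_x g\rVert_{L^2}.
\]

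It then suffices to prove the 3D Agmon-type inequality $\lVert f\rVert_{L^\infty(\Omega)}\le C\lVert f\rVert_{H^1}^{1/2}\lVert f\rVert_{H^2}^{1/2}$ on $\Omega$. Once we have it, applying it to $f=\tilde g$ and using the Poincaré bound gives $\lVert \tilde g\rVert_{H^1}\le C\lVert\nabla_x g\rVert_{L^2}$. It also gives $\lVert\tilde g\rVert_{H^2}\le C(\lVert\nabla_x g\rVert_{L^2}+\lVert\nabla^2_x g\rVert_{L^2})$.

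This is \textbf{the main obstacle}: the $H^2$ norm contains a lower-order piece, and we want the pure homogeneous product $\lVert\nabla_x g\rVert^{1/2}\lVert\nabla^2_x g\rVert^{1/2}$. To remove it, we apply Poincaré once more to $\nabla_{x'}\tilde g$, which also has zero slice mean. This gives $\lVert\nabla_{x'}\tilde g\rVert_{L^2}\le C\lVert\nabla_{x'}^2\tilde g\rVert_{L^2}\le C\lVert\nabla_x^2 g\rVert_{L^2}$, and therefore $\lVert\tilde g\rVert_{L^2}+\lVert\nabla_x\tilde g\rVert_{L^2}\le C\lVert\nabla_x g\rVert_{L^2}$.

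The remaining term $\lVert\partial_{x_1}\tilde g\rVert_{L^2}$ is bounded by $\lVert\nabla_x g\rVert_{L^2}$, not by second derivatives. We therefore need a refined Agmon inequality $\lVert f\rVert_{L^\infty}\le C\lVert f\rVert_{H^1}^{1/2}\lVert f\rVert_{\dot H^2}^{1/2}+C\lVert f\rVert_{H^1}$ with the lower-order piece handled by the transverse Poincaré inequality. I would obtain this through the Fourier series in $x'$. Write $\tilde g=\sum_{k\neq0}\hat g_k(x_1)e^{2\pi i k\cdot x'}$, with $|k|\ge1$. For each mode the 1D bound gives
\[
\lVert\hat g_k\rVert_{L^\infty}\le \sqrt2\,\lVert\hat g_k\rVert_{L^2}^{1/2}\lVert\hat g_k'\rVert_{L^2}^{1/2}.
\]
Summing over $k$ with Cauchy--Schwarz and the weights $|k|^{-1}(1+|k|)^{-1}$, split between $|k|\le N$ and $|k|>N$ and optimized in $N$ as in the standard Agmon argument, bounds $\sum_k\lVert\hat g_k\rVert_{L^\infty}$. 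The result is $C\lVert\nabla_x\tilde g\rVert_{L^2}^{1/2}\lVert\nabla_x^2\tilde g\rVert_{L^2}^{1/2}$, up to a logarithm-free constant since $|k|\ge1$.

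Combining the mean and oscillatory bounds yields the lemma.
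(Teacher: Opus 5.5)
Your proof is correct, and it takes a different route from the paper, which gives no argument and only refers to a general Sobolev-space reference (Adams).

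\textbf{What your splitting buys.} Separating the slice mean $\bar g$ from the oscillation $\tilde g$ shows exactly where each term on the right comes from. The $x'$-independent mode is genuinely one-dimensional: for $g=f(x_1/L)$ one has $\|\nabla_x g\|^{1/2}\|\nabla_x^2 g\|^{1/2}=O(L^{-1})$ while $\|g\|_{L^\infty}$ stays fixed. This is why the first term is needed. All modes with $k\neq0$ obey the homogeneous three-dimensional Agmon bound.

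\textbf{The Fourier step closes.} Set $a_k=\|\hat g_k\|_{L^2(\mathbb{R}_+)}$ and $b_k=\|\hat g_k'\|_{L^2(\mathbb{R}_+)}$. For $|k|\le N$ use $\sqrt{a_kb_k}\le\frac12|k|^{-1/2}(|k|a_k+b_k)$, and for $|k|>N$ use $\sqrt{a_kb_k}\le\frac12|k|^{-3/2}(|k|^2a_k+|k|b_k)$. Cauchy--Schwarz together with $\sum_{1\le|k|\le N}|k|^{-1}\le CN$ and $\sum_{|k|>N}|k|^{-3}\le CN^{-1}$ gives $\|\tilde g\|_{L^\infty}\le C\big(N^{1/2}\|\nabla_x\tilde g\|+N^{-1/2}\|\nabla_x^2\tilde g\|\big)$ for every $N>0$. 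Optimizing in $N$ finishes this part.

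\textbf{The ``main obstacle'' is not an obstacle.} That paragraph rests on an inaccurate claim. The function $\partial_{x_1}\tilde g$ also has zero mean on every slice, so Poincar\'e gives $\|\partial_{x_1}\tilde g\|\le C\|\nabla_{x'}\partial_{x_1}\tilde g\|\le C\|\nabla_x^2 g\|$. With the bounds you already have, this yields $\|\tilde g\|_{H^1}\le C\|\nabla_x\tilde g\|$ and $\|\tilde g\|_{H^2}\le C\|\nabla_x^2\tilde g\|$. The standard inhomogeneous Agmon inequality on $\Omega$ (e.g.\ via an $H^2$ extension to $\mathbb{R}\times\mathbb{T}^2$), applied to $\tilde g$, then finishes at once. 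The mode-by-mode summation is needed only if you want a fully self-contained proof.

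Also, the sentence ending ``and therefore $\|\tilde g\|+\|\nabla_x\tilde g\|\le C\|\nabla_x g\|$'' does not follow from the line before it. What that line actually gives is $\|\tilde g\|+\|\nabla_{x'}\tilde g\|\le C\|\nabla_x^2 g\|$.
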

We refer to \cite{adams2003jjf} for its proof.  

\begin{lemma} \label{GNIsevera}
For any $f : \Omega \to \mathbb{R}$ belonging to $H^1$, it holds from Gagliardo-Nirenberg interpolation inequality that
\begin{align} \label{GNIAPP1}
\lVert f \rVert_{L^3}\le C\sqrt{ \lVert f \rVert_{L^6}}\sqrt{ \lVert f \rVert_{L^2}}.   
\end{align}
On the other hand, using Sobolev embedding, we have
\begin{align} \label{GNIAPP2}
\lVert f \rVert_{L^6}\le C\lVert f \rVert_{H^1}.    
\end{align}
Combining \eqref{GNIAPP1} and \eqref{GNIAPP2}, we get
\begin{align} \label{GNIAPP3}
\lVert f \rVert_{L^3}\le C \lVert f \rVert_{H^1}.    
\end{align}
Finally, by H\"older's inequality and \eqref{GNIAPP2}-\eqref{GNIAPP3}, we get
\begin{align} \label{GNIAPP4}
\lVert fg \rVert_{L^2}\le \lVert f \rVert_{L^3}\lVert g \rVert_{L^6}\le C\lVert f \rVert_{H^1}\lVert g \rVert_{H^1}.   
\end{align}
\end{lemma}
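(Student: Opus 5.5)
The statement is Lemma~\ref{GNIsevera}, which is elementary. I will prove the four inequalities in order, working on $\Omega=\mathbb{R}_+\times\mathbb{T}^2$, a three-dimensional Lipschitz domain with the extension property. The approach is to reduce everything to $\mathbb{R}^3$ (or $\mathbb{R}\times\mathbb{T}^2$) through a bounded extension operator.

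\textbf{Step 1: the extension operator.} Take $E$ to be reflection across $x_1=0$, namely $Ef(x_1,x')=f(-x_1,x')$ for $x_1<0$. This map is bounded from $L^2(\Omega)$ to $L^2(\mathbb{R}\times\mathbb{T}^2)$ and from $H^1(\Omega)$ to $H^1(\mathbb{R}\times\mathbb{T}^2)$. Both operator norms are controlled by a factor $\sqrt2$.

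\textbf{Step 2: the Sobolev embedding \eqref{GNIAPP2}.} On $\mathbb{R}\times\mathbb{T}^2$, which is locally three-dimensional with bounded geometry, the embedding $H^1\hookrightarrow L^6$ holds. To see this, cover the domain by unit cells $[k,k+1]\times\mathbb{T}^2$ with bounded overlap. On each cell, a localized $H^1(\mathbb{R}^3)$ Sobolev inequality holds with a uniform constant. Summing $\|f\|_{L^6(\text{cell})}^6\le C\|f\|_{H^1(\text{cell})}^6$ over cells and using $\sum a_k^3\le(\sum a_k)^3$ gives $\|f\|_{L^6}\le C\|f\|_{H^1}$. Restricting back to $\Omega$ yields \eqref{GNIAPP2}.

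\textbf{Step 3: the interpolation \eqref{GNIAPP1}.} This is just Hölder interpolation of Lebesgue norms. Since $\tfrac13=\tfrac{1/2}{6}+\tfrac{1/2}{2}$, we get $\|f\|_{L^3}\le\|f\|_{L^6}^{1/2}\|f\|_{L^2}^{1/2}$ with constant $1$. Concretely, write $|f|^3=|f|^{3/2}|f|^{3/2}$ and apply Cauchy--Schwarz with the pairing $\|f\|_{L^6}^{3}\|f\|_{L^2}^{... }$; more precisely, $\int|f|^3\le(\int|f|^6)^{1/2}(\int|f|^0\cdot|f|^{0})\dots$, so I would use the standard Riesz--Thorin/Hölder form directly.

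\textbf{Step 4: \eqref{GNIAPP3} and \eqref{GNIAPP4}.} Substituting \eqref{GNIAPP2} into \eqref{GNIAPP1}, and using $\|f\|_{L^2}\le\|f\|_{H^1}$, gives \eqref{GNIAPP3}. For \eqref{GNIAPP4}, apply Hölder with $\tfrac12=\tfrac13+\tfrac16$ to get $\|fg\|_{L^2}\le\|f\|_{L^3}\|g\|_{L^6}$. Then bound the two factors by \eqref{GNIAPP3} and \eqref{GNIAPP2} respectively.

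\textbf{Main obstacle.} The only point requiring care is the uniformity of the Sobolev constant in Step 2 on the unbounded, partly periodic domain. The cell decomposition, or equivalently a finite atlas on $\mathbb{T}^2$ combined with the standard result on uniform domains (see \cite{adams2003jjf}), handles this. Everything else is Hölder's inequality.
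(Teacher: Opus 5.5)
Your argument is correct and follows the same skeleton as the paper: Lebesgue interpolation for \eqref{GNIAPP1}, the embedding $H^1\hookrightarrow L^6$ for \eqref{GNIAPP2}, then H\"older with $\tfrac12=\tfrac13+\tfrac16$. Beyond the paper, you note that \eqref{GNIAPP1} is pure H\"older interpolation with constant $1$ rather than genuine Gagliardo--Nirenberg, and you justify the embedding on the unbounded domain $\mathbb{R}_+\times\mathbb{T}^2$ by a cell decomposition, where the paper simply cites it. The one thing to fix is the ``concrete'' Cauchy--Schwarz sentence in Step 3, which fails as written: pairing $|f|^3$ against $1$ costs $\bigl(\int_\Omega 1\,dx\bigr)^{1/2}=\infty$. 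Instead, apply H\"older with exponents $4$ and $\tfrac43$ to $|f|^3=|f|^{3/2}|f|^{3/2}$, which gives $\|f\|_{L^3}^3\le\|f\|_{L^6}^{3/2}\|f\|_{L^2}^{3/2}$.
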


\subsubsection{Estimates on the relative quantities}
Dafermos \cite{D79} and DiPerna \cite{diperna1979uniqueness} exploit the relative entropy method for the $L^2$ stability and uniqueness of Lipschitz solutions to the hyperbolic conservation laws. We adopt this method. Let $p(v)=v^{-\gamma}$ and $Q(v)=\frac{v^{-\gamma+1}}{\gamma-1}$. To estimate the wave perturbations, we consider the corresponding relative quantities defined as
\[p(v|w) := p(v)-p(w)-p'(w)(v-w),\quad Q(v|w) := Q(v)-Q(w)-Q'(w)(v-w).\]
These quantities exhibit a locally quadratic structure. In particular, $(3)$ gives sharp estimates in terms of local dynamics. This lemma can be proved by using a Taylor expansion. Since the proof can be found in \cite{KV21}, we omit it here.

\begin{lemma}\label{lem-rel-quant}
	Let $\gamma>1$ and $v_+$ be given constants. Then there exist constants $C,\delta_*>0$ such that the following assertions hold:
	\begin{enumerate}
		\item For any $v,w$ satisfying $0<w<2v_+$ and $0<v<3v_+$,
		\begin{equation}\label{est-rel-1}
		|v-w|^2\le CQ(v|w),\quad |v-w|^2\le Cp(v|w).
		\end{equation}
		\item For any $v,w$ satisfying $v,w>v_+/2$,
		\begin{equation}\label{est-rel-2}
		\frac{1}{C}|v-w|\le|p(v)-p(w)|\le C|v-w|.
		\end{equation}
		\item For any $0<\delta<\delta_*$ and any $(v,w)\in\bbr_+^2$ satisfying $|p(v)-p(w)|<\delta$ and $|p(w)-p(v_+)|<\delta$,
		\begin{align}
		\begin{aligned}\label{est-rel-3}
		&p(v|w) \le \left(\frac{\gamma+1}{2\gamma}\frac{1}{p(w)}+C\delta\right)|p(v)-p(w)|^2,\\
		&Q(v|w) \ge \frac{p(w)^{-\frac{1}{\gamma}-1}}{2\gamma}|p(v)-p(w)|^2-\frac{1+\gamma}{3\gamma^2}p(w)^{-\frac{1}{\gamma}-2}|p(v)-p(w)|^3,\\ 
		&Q(v|w)\le \left(\frac{p(w)^{-\frac{1}{\gamma}-1}}{2\gamma}+C\delta\right)|p(v)-p(w)|^2.
		\end{aligned}
		\end{align}
	\end{enumerate}
\end{lemma}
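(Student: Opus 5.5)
The three assertions of Lemma~\ref{lem-rel-quant} are local statements about the smooth scalar functions $p(v)=v^{-\gamma}$ and $Q(v)=\frac{v^{-\gamma+1}}{\gamma-1}$, so the plan is to prove all of them by Taylor expansion with integral remainder. For (1), write
\[
Q(v|w)=(v-w)^2\int_0^1(1-s)\,Q''(w+s(v-w))\,ds,\qquad p(v|w)=(v-w)^2\int_0^1(1-s)\,p''(w+s(v-w))\,ds .
\]
Since $Q''(v)=\gamma v^{-\gamma-1}$ and $p''(v)=\gamma(\gamma+1)v^{-\gamma-2}$ are positive and decreasing, and every point $w+s(v-w)$ lies in $(0,3v_+)$, both integrands are bounded below by $\tfrac12 Q''(3v_+)$ and $\tfrac12 p''(3v_+)$ respectively. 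This gives \eqref{est-rel-1} with $C$ depending only on $\gamma$ and $v_+$. For (2), apply the mean value theorem: $|p(v)-p(w)|=|p'(\theta)||v-w|$ with $\theta$ between $v$ and $w$, hence $\theta>v_+/2$. The upper bound $|p'(\theta)|\le\gamma(v_+/2)^{-\gamma-1}$ is immediate. The lower bound needs $\theta$ bounded above, so I would either add the harmless restriction $v,w<3v_+$, or observe that $|p(v)-p(w)|\le p(v_+/2)$ while $p(v)-p(w)$ never vanishes when $v\ne w$, and split into the cases $|v-w|\le 2v_+$ and $|v-w|>2v_+$.

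The main step is (3), where the constants must be sharp. I change variables to $P=p(v)$ and $W=p(w)$, so that $v=P^{-1/\gamma}$. In these variables $p(v|w)=P-W-p'(w)(v-w)$, which equals $-p'(w)\big[\,v(P)-v(W)-v'(W)(P-W)\,\big]$, because $v'(W)=1/p'(w)$. Expanding $v(P)=P^{-1/\gamma}$ to second order around $W$ gives
\[
p(v|w)=-p'(w)\Big[\tfrac12 v''(W)(P-W)^2+O(|P-W|^3)\Big].
\]
Here $-p'(w)=\gamma W^{1+1/\gamma}$ and $v''(W)=\frac1\gamma\big(\frac1\gamma+1\big)W^{-1/\gamma-2}$, so the leading term is $\frac{\gamma+1}{2\gamma}\frac1W\,|P-W|^2$. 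The cubic remainder is at most $C\delta|P-W|^2$, because $|P-W|<\delta$ and $W$ stays near $p(v_+)$. This proves the first inequality of \eqref{est-rel-3}.

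For $Q$, note that $Q'(w)=-p(w)$. Hence
\[
Q(v|w)=\int_w^v\big(p(w)-p(s)\big)\,ds .
\]
Substituting $s=v(R)$ turns this into $\int_W^P (W-R)\,v'(R)\,dR$. I then expand $v'(R)=v'(W)+v''(W)(R-W)+\dots$. Integrating term by term gives
\[
Q(v|w)=\tfrac{1}{2\gamma}W^{-1/\gamma-1}(P-W)^2-\tfrac13 v''(W)(P-W)^3+O(|P-W|^4).
\]
This already yields the upper bound in \eqref{est-rel-3}, with the $C\delta$ correction absorbing the cubic and quartic terms. The lower bound is the delicate part, since its cubic coefficient has to be exactly $\frac{1+\gamma}{3\gamma^2}W^{-1/\gamma-2}$ and the quartic remainder must have the right sign. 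To handle this I keep the exact integral remainder and use convexity: $v'''(R)$ has a fixed sign, because $v'(R)=-\frac1\gamma R^{-1/\gamma-1}$ is concave. With this, the remainder after the cubic term is nonnegative, the displayed lower bound holds without any $\delta$ loss, and $\delta_*$ is only needed to keep all arguments in a compact range around $p(v_+)$.

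Throughout, the constants depend only on $\gamma$ and $v_+$, and I expect only the sign check in this last step to require care.
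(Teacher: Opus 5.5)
Your proof uses the Taylor-expansion route that the paper defers to \cite{KV21}, and parts (1) and (3) are correct. Your sign check for the lower bound on $Q$ works. With $V(R)=R^{-1/\gamma}$, the function $V'(R)=-\frac{1}{\gamma}R^{-1/\gamma-1}$ is concave on all of $(0,\infty)$. Hence $E(R):=V'(R)-V'(W)-V''(W)(R-W)\le 0$, and $\int_W^P (W-R)E(R)\,dR\ge 0$ for both $P>W$ and $P<W$. This gives the lower bound with the signed cubic $-\frac{1+\gamma}{3\gamma^2}W^{-\frac1\gamma-2}(P-W)^3$, valid for all $P,W>0$. The stated version with $|p(v)-p(w)|^3$ then follows from $x^3\le|x|^3$. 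Say this explicitly, since your expansion produces the signed term.

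The flaw is in part (2). Under the sole hypothesis $v,w>v_+/2$, the lower bound $\frac1C|v-w|\le|p(v)-p(w)|$ is false, so no argument can prove it as stated.
\begin{itemize}
\item With $w=v_+$ and $v\to\infty$, the left side is unbounded while $|p(v)-p(w)|<p(v_+)$.
\item With $v=N$, $w=N+1$, one has $|v-w|=1$ but $|p(v)-p(w)|\approx\gamma N^{-\gamma-1}\to0$.
\end{itemize}
Your second remedy, bounding $|p(v)-p(w)|\le p(v_+/2)$ and splitting into $|v-w|\le 2v_+$ and $|v-w|>2v_+$, therefore fails in both cases. In the first case $v,w$ can both be huge; in the second the left side is unbounded. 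That bound $|p(v)-p(w)|\le p(v_+/2)$ actually exhibits the failure, and the fact that $p(v)-p(w)\neq 0$ for $v\neq w$ gives no uniform constant.

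Only your first option works. Add an upper bound such as $v,w<3v_+$; the mean value theorem then gives $|p'(\theta)|\ge\gamma(3v_+)^{-\gamma-1}$. This restricted two-sided bound is all the paper needs. Its uses are:
\begin{itemize}
\item \eqref{smp1}, which needs only the upper bound;
\item comparisons such as $S\sim p(v)-p(\tilde v)$, or between the two forms of $\mathcal{G}^S$, which are made with $v,\tilde v$ close to $v_+$ by the $L^\infty$ smallness of the perturbation.
\end{itemize}
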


\subsection{Vector Identities}
In multi-dimensional settings, several vector identities are highly useful for facilitating the estimation process. We present these identities in advance in this subsection.
\begin{lemma} \label{vectori}
For scalar functions $f$ and $g$, and vector functions $\mathbf{F}$ and $\mathbf{G}$, the following identities hold:
\begin{enumerate}
		\item
		\qquad \qquad \qquad \qquad \qquad $\operatorname{div}(\mathbf{F}\times \mathbf{G})=(\nabla\times \mathbf{F})\cdot\mathbf{G}-(\nabla\times \mathbf{G})\cdot \mathbf{F}$
        \vspace{1em}
		\item 
		\qquad \qquad \qquad \qquad \qquad \qquad \qquad \qquad
		$\nabla\times\nabla f\equiv 0$
        \vspace{1em}
        \item 
         \qquad \qquad \qquad \qquad \qquad \qquad \qquad  $\nabla(fg)=g\nabla f+f\nabla g $ 
         \vspace{1em}
        \item 
        \qquad \qquad \qquad \qquad \qquad \qquad \qquad 
         $\operatorname{div}(f\mathbf{F})=f\operatorname{div}\mathbf{F}+\nabla f \cdot \mathbf{F}$
         \vspace{1em}
        \item
        \qquad \qquad \qquad \qquad \qquad \qquad \qquad 
         $\operatorname{div}(\mathbf{F}\nabla\mathbf{G})=\mathbf{F}\cdot\Delta \mathbf{G}+ \nabla \mathbf{F} : \nabla \mathbf{G}$
         \vspace{1em}
        \item 
        \qquad \qquad \qquad \qquad \qquad \qquad \qquad 
    $(\mathbf{F}\nabla\mathbf{G})\cdot \mathbf{G}=\frac{1}{2}\mathbf{F} \cdot \nabla|\mathbf{G}|^2$
     \vspace{1em}
            \item
        \qquad \qquad \qquad \qquad \quad
         $\nabla(\mathbf{F}\cdot\mathbf{G})=\mathbf{F}\nabla\mathbf{G}+\mathbf{G}\nabla\mathbf{F}+\mathbf{F}\times(\nabla\times \mathbf{G})+\mathbf{G}\times(\nabla\times \mathbf{F}).$
        
	\end{enumerate}   
\end{lemma}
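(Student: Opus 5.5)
Each identity in Lemma \ref{vectori} is a pointwise algebraic consequence of the product rule and of the symmetry of second derivatives. I would verify each one in index notation, using the Einstein summation convention, the Levi-Civita symbol $\epsilon_{ijk}$, and the contraction identity $\epsilon_{ijk}\epsilon_{ilm}=\delta_{jl}\delta_{km}-\delta_{jm}\delta_{kl}$. Throughout I fix the conventions $(\mathbf F\nabla\mathbf G)_i=F_j\partial_jG_i$ and $\nabla\mathbf F:\nabla\mathbf G=\partial_jF_i\,\partial_jG_i$, which are the conventions used for the convective term $\mathbf u\nabla_x\mathbf u$ in \eqref{changed main}.

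\textbf{Identities (2)--(4).} These are immediate.
\begin{itemize}
\item For (2), $(\nabla\times\nabla f)_i=\epsilon_{ijk}\partial_j\partial_k f$ vanishes because $\epsilon_{ijk}$ is antisymmetric in $j,k$ while $\partial_j\partial_k f$ is symmetric.
\item Identity (3) is the Leibniz rule applied componentwise.
\item For (4), $\partial_i(fF_i)=f\partial_iF_i+F_i\partial_if$.
\end{itemize}

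\textbf{Identity (1).} Write $\operatorname{div}(\mathbf F\times\mathbf G)=\partial_i(\epsilon_{ijk}F_jG_k)=\epsilon_{ijk}(\partial_iF_j)G_k+\epsilon_{ijk}F_j\partial_iG_k$. The first term is $(\nabla\times\mathbf F)_kG_k$ after cyclically permuting the indices. The second term is $-F_j(\nabla\times\mathbf G)_j$, since swapping $i$ and $j$ flips the sign of $\epsilon_{ijk}$.

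\textbf{Identities (5) and (6).}
\begin{itemize}
\item For (5), I read $\operatorname{div}(\mathbf F\nabla\mathbf G)$ as $\partial_j(F_i\partial_jG_i)$, i.e.\ the divergence of the vector $(\nabla\mathbf G)^{T}\mathbf F$. This convention is the one that makes the stated right-hand side correct. It expands to $F_i\partial_j\partial_jG_i+\partial_jF_i\,\partial_jG_i=\mathbf F\cdot\Delta\mathbf G+\nabla\mathbf F:\nabla\mathbf G$.
\item For (6), $F_j(\partial_jG_i)G_i=\tfrac12F_j\partial_j(G_iG_i)$.
\end{itemize}

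\textbf{Identity (7).} This is the only identity needing a genuine computation. I would compute the double-cross term $(\mathbf F\times(\nabla\times\mathbf G))_i=\epsilon_{ijk}F_j\epsilon_{klm}\partial_lG_m$. Using $\epsilon_{ijk}\epsilon_{klm}=\epsilon_{kij}\epsilon_{klm}=\delta_{il}\delta_{jm}-\delta_{im}\delta_{jl}$, this equals $F_j\partial_iG_j-F_j\partial_jG_i$. Symmetrically, $(\mathbf G\times(\nabla\times\mathbf F))_i=G_j\partial_iF_j-G_j\partial_jF_i$. Adding these two to $(\mathbf F\nabla\mathbf G)_i+(\mathbf G\nabla\mathbf F)_i=F_j\partial_jG_i+G_j\partial_jF_i$, the convective terms cancel, and what remains is $F_j\partial_iG_j+G_j\partial_iF_j=\partial_i(F_jG_j)$.

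The only real pitfalls are bookkeeping: fixing the matrix/transpose conventions consistently, and getting the sign in the $\epsilon$--$\delta$ contraction right in (7).
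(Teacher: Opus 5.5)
Your verification is correct. The paper states these identities without proof as standard facts, and your componentwise index computation is exactly the routine check they require.

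Your remark on conventions is also right and worth stating explicitly at the outset. No single reading of $\mathbf F\nabla\mathbf G$ makes (5), (6) and (7) all true: (6) and (7) need the convective reading $F_j\partial_jG_i$, while (5) needs the transposed reading. The paper itself applies (5) with the transposed reading in the proof of Lemma~\ref{L:bdesti}, where the vector $\psi_z\nabla_x\psi_z$ under the divergence must equal $\tfrac12\nabla_x|\psi_z|^2$.
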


\section{Main proposition for proof of Theorem \ref{thmv,ucoor}}\label{sec:3}
\setcounter{equation}{0}
In this section, we present a main proposition on the \textit{a priori} estimates for the proof of Theorem \ref{thmv,ucoor}. 

\subsection{Local existence}
For the local-in-time existence of solutions to \eqref{thmstate1}, we employ the classical result of \cite{kagei2006local} without further elaboration. Hence, provided that local existence is guaranteed, we hereafter dedicate our attention to obtaining the \textit{a priori} estimates.

\subsection{Construction of weight function}
In order to control the shock wave, we first construct a weight function $a : \mathbb{R} \to \mathbb{R}$. This function will yield several ``good terms" that are highly useful in the subsequent estimates. Recall $\delta$ is the shock strengh. We define
\begin{equation} \label{weightdef}
a(\xi):= 1+\frac{p(v_-)-p(\tilde{v}(\xi))}{\sqrt{\delta}}, \qquad \xi=x_1-\sigma t.
\end{equation}
Note that 
\begin{align}
a_{x_1} = -\frac{1}{\sqrt{\delta}}p(\tilde{v})_{x_1}>0,    
\end{align}\label{avd}
which produces
\beq\label{inta}
|a_{x_1}| \sim \frac{1}{\sqrt{\delta}} |\tilde{v}_{x_1}|,\quad \mbox{and so,}\quad  \|a_{x_1}\|_{L^\infty(\bbr)}\le \delta\sqrt{\delta},\quad  \|a_{x_1}\|_{L^1(\bbr)} =\sqrt{\delta}.
\eeq
In addition, from the increasing property and the smallness of $\delta$, we have
\[1 \le  a \le 1+\sqrt{\delta} \le \frac{5}{4}.\]

\subsection{Construction of shift functions}
Our aim is to prove the orbital stability of the viscous shock. Thus, we consider the shifted viscous shock instead of the original one as follows:
\begin{align}
	\begin{aligned}\label{composite_wave}
	(\tv^{X, \beta},\tilde{u}^{X, \beta})(t,x)&:= \left(\tv(x_1-\sigma t-X(t)-\beta),\tu(x_1-\sigma t-X(t)-\beta)\right).
	\end{aligned}
\end{align}
First, a sufficiently large constant $\beta$ serves as a parameter to shift the viscous shock far away from the boundary effect, and it depends only on the shock amplitude $\delta$. The function $X(t)$, on the other hand, is a dynamical shift for obtaining $H^2$-stability, defined as a solution to the following ODEs. 
We define a shift $X$ as a solution to the ODEs for the time-variable $t$:
\begin{equation}\label{X(t)}
\left\{
\begin{array}{ll}
\di \dot{X}(t)=-\frac{M}{\delta}\bigg[
 \int_{\Omega}\frac{a^{X, \beta}}{\sigma^*}\rho\tilde{h}_{x_1}^{X, \beta}(p(v)-p(\tilde{v}^{X, \beta}))\,dx  -\int_{\Omega}a^{X,\beta} \rho p(\tilde{v}^{X, \beta})_{x_1}(v-\tilde{v}^{X, \beta})\,dx\bigg],\\ 
\di  X(0)=0.
\end{array}
\right.
\end{equation}

\noindent Here, $a^{X,\beta}$ is the shifted weight function as defined in \eqref{weightdef}, $\tilde{h}:=\tilde{u}-(2\mu+\lambda)\partial_{x_1}\tilde{v}$, and $M$ is a particular constant suitably chosen as $M:=\frac{3}{2}\sigma_+^4v_+^2\alpha_+$, where $\sigma_+=\sqrt{-p'(v_+)}$ and $\alpha_+=\frac{\gamma+1}{2\gamma \sigma_+ p(v_+)}$. 
The Cauchy-Lipschitz theorem \cite[Lemma 3.1]{HKK23} guarantees the well-posedness of the above ODEs, and in particular, that the solution is Lipschitz continuous. We omit the detailed proof since it can be found in the reference.

\subsection{Main proposition for \textit{a priori} estimates} We here present the main proposition for \textit{a priori} estimates.

\begin{proposition} \label{prop:main}
Let $(\tilde{v}^{X,\beta},\tilde{u}^{X, \beta})$ denote the shifted viscous 2-shock wave as in \eqref{changed viscous shock equation} and \eqref{composite_wave}. For a given constant $U_+:=(v_+,\mathbf{u}_+)\in \mathbb{R_+}\times (\mathbb{R}\times\left\{ 0 \right\}\times\left\{ 0 \right\})$, there exist positive constants $\delta_0$, $C_0$, and $\varepsilon$ such that the following holds:   
For any constant state $U_-:=(v_-,u_-)$ satisfying \eqref{changed RH condition 2} 
satisfying $|p(v_-)-p(v_+)|=:\delta<\delta_0$, if $(v,\mathbf{u})$ is the solution to \eqref{changed main} on $\left[ 0,T \right]$ for some $T>0$ and satisfy 
\begin{equation*}
\begin{aligned}
v-\tilde{v}^{X,\beta}&\in C(\left[ 0,T \right]; H^2(\Omega)), \qquad \quad \ \mathbf{u}-\tilde{\mathbf{u}}^{X,\beta}\in C(\left[ 0,T \right]; H^2(\Omega)), \\   
\nabla_x(v-\tilde{v}^{X,\beta})&\in L^2(\left[ 0,T \right]; H^1(\Omega)), \quad \nabla_x(\mathbf{u}-\tilde{\mathbf{u}}^{X,\beta})\in L^2(\left[ 0,T \right]; H^2(\Omega)), \\
\partial_{t}(v-\tilde{v}^{X,\beta})&\in L^2(\left[0, T\right]; H^1(\Omega)), \quad \
\partial_{t}(\mathbf{u}-\tilde{\mathbf{u}}^{X,\beta})\in L^2(\left[0, T\right]; H^1(\Omega)),
\end{aligned}    
\end{equation*}
with
\begin{equation} \label{perturbation_small}
\lVert v-\tilde{v}^{X,\beta} \rVert_{L^\infty(0,T;H^2(\Omega))}+\lVert \mathbf{u}-\tilde{\mathbf{u}}^{X,\beta} \rVert_{L^\infty(0,T;H^2(\Omega))}\le \varepsilon,
\end{equation}
\end{proposition}
then we have
\begin{equation}
\begin{aligned} \label{C-3}
&\sup\limits_{t\in \left[0, T\right]}\lVert (v-\tilde{v}^{X,\beta})(t, \cdot) \rVert_{H^2(\Omega)}+\sup\limits_{t\in \left[0, T\right]}\lVert (\mathbf{u}-\tilde{\mathbf{u}}^{X,\beta})(t, \cdot) \rVert_{H^2(\Omega)}+\sqrt{\delta\int_{0}^{T}|\dot{X}(t)|^2\, dt} \\
&\quad \ \ \ +\sqrt{\int_{0}^{T}(\mathcal{G}^S(U)+\mathcal{D}(U)+\mathcal{D}_1(U)+\mathcal{D}_2(U))+\mathcal{D}_3(U) + \mathcal D_4(U))\, dt}\\
&\quad \le C_0(\lVert v_0-\tilde{v}(0,\cdot) \rVert_{H^2(\Omega)}+\lVert \mathbf{u}_0-\tilde{\mathbf{u}}(0,\cdot) \rVert_{H^2(\Omega)})+C_0e^{-C\delta\beta},
\end{aligned}
\end{equation}
where the constant $C_0$ is independent of $T$.

In particular, for all $t\in \left[0, T\right]$,
\begin{equation} \label{bddx12}
|\dot{X}(t)|\le C_0\lVert (v-\tilde{v}^{X,\beta})(t,\cdot) \rVert_{L^\infty(\Omega)}. 
\end{equation}

In addition,
\begin{equation}
\begin{aligned}
\mathcal{G}^S(U)&:=\int_{\mathbb{T}^2} \int_{\mathbb{R}_+} |\tilde{v}_{x_1}^{X,\beta}| |(v-\tilde{v}^{X,\beta})|^2 \, dx_1 \, dx', \\
\mathcal{D}(U)&:=\int_{\mathbb{T}^2} \int_{\mathbb{R}_+} |\nabla_x(p(v)-p(\tilde{v}^{X,\beta}))|^2 \, dx_1 \, dx' , \\
\mathcal{D}_1(U)&:=\int_{\mathbb{T}^2} \int_{\mathbb{R}_+} |\nabla_x^2(v-\tilde v^{X,\beta})|^2 \, dx_1 \, dx',  \\\mathcal{D}_2(U)&:=\int_{\mathbb{T}^2} \int_{\mathbb{R}_+} \big(|\nabla_x(\mathbf{u}-\tilde{\mathbf{u}}^{X,\beta})|^2 + |\nabla^2_x(\mathbf{u}-\tilde{\mathbf{u}}^{X,\beta})|^2 + |\nabla^3_x(\mathbf{u}-\tilde{\mathbf{u}}^{X,\beta})|^2 \big) \, dx_1 \, dx',  \\
\mathcal{D}_3(U)&:=\int_{\mathbb{T}^2} \int_{\mathbb{R}_+} \big(|\pa_t(\mathbf{u}-\tilde{\mathbf{u}}^{X,\beta})|^2 +|\pa_t\nabla_{x}(\mathbf{u}-\tilde{\mathbf{u}}^{X,\beta})|^2 \big) \, dx_1 \, dx', \\
\mathcal{D}_4(U)&::=\int_{\mathbb{T}^2} \int_{\mathbb{R}_+} \big(|\pa_t(v-\tilde{v}^{X,\beta})|^2 +|\pa_t\nabla_{x}(v-\tilde{v}^{X,\beta})|^2 \big) \, dx_1 \, dx'.
\end{aligned}
\end{equation}

\subsection{Global-in-time existence and long-time behavior}
Using Propositions 3.1 and 3.2, we can apply a classical continuation argument to prove the global-in-time existence of perturbations in $\eqref{thmstate1}$. We can also utilize Proposition 3.2 to prove the long-time behavior in $\eqref{thmstate2}$. Since these arguments are standard, we omit the detailed proof of Theorem $\ref{thmv,ucoor}$ and refer the reader to \cite{KVW23, HKK23}.

Hence, the remaining part of the paper is dedicated to the proof of Proposition 3.2.

\subsection{Estimate on shifts} 
Before closing this section, we prove the estimate \eqref{bddx12} in advance. \\
Using the assumption \eqref{perturbation_small}, Sobolev inequality, and \eqref{est-rel-2}, we have
\beq\label{smp1}
\|p(v)-p(\tilde{v}^{X,\beta})\|_{L^\infty((0,T)\times\Omega)}\le C\|v-\tv^{X,\beta}\|_{L^\infty((0,T)\times\Omega)} \le C\varepsilon.
\eeq
Thus, applying \eqref{smp1} and \eqref{viscous-shock-h} to \eqref{X(t)}, we have
\begin{align}
\begin{aligned}\label{dxbound}
|\dot{X}(t)| & \le \frac{C}{\delta}  \left(\|p(v)-p(\tv^{X,\beta}) \|_{L^\infty(\Omega)}+\|v-\tv^{X,\beta} \|_{L^\infty(\Omega)}\right) \int_{\mathbb{R}_+} |\tv_{x_1}^{X,\beta} | \,  dx_1  \\ 
& \le C\|v-\tv^{X,\beta}\|_{L^\infty(\Omega)},\quad t\le T.    
\end{aligned}   
\end{align}

Therefore, \eqref{bddx12} holds. 

\subsubsection{Remark}
Along with \eqref{bddx12}, we will further derive an estimate for $\ddot{X}(t)$ in Section 6 to control the other higher-order terms. For all $t\in [0, T]$, the following holds:
\[|\ddot{X}(t)|^2\le C(|\dot{X}(t)|^2+\delta\|\phi_t\|^2 +\delta^3\mathcal{G}^s).\]

\subsection{Notations} Hereafter, we use the following notations for simplicity. \\
1. $C$ denotes a generic positive $O(1)$-constant that may change from line to line, but is independent of the small parameters $\delta_0, \eps_1, \delta,$ and the time $T$.\\
2. Without danger of confusion, we drop the explicit dependence in the shifts $X$ and $\beta$ of the viscous 2-shock wave \eqref{composite_wave} and the weight function $a$ as follows:
\[
(\tv, \tu) (t,x) := (\tv^{X, \beta}, \tu^{X, \beta}) (t,x) \qquad and \qquad a(t,x):=a^{X,\beta}(t, x).
\]
3. For any scalar function $f$, 
\begin{equation*}
\nabla_xf:=(\partial_{x_1}f, \partial_{x_2}f, \partial_{x_3}f) \qquad and \qquad \nabla_{x'}f:=( \partial_{x_2}f, \partial_{x_3}f).  
\end{equation*}
4. For any three-dimensional vector function $\mathbf{F}=(F_1, F_2, F_3)$, 
\begin{equation*}
\operatorname{div}_x\mathbf{F}:=\partial_{x_1}F_1+\partial_{x_2}F_2+\partial_{x_3}F_3 \qquad and \qquad \operatorname{div}_{x'}\mathbf{F}:=\partial_{x_2}F_2+\partial_{x_3}F_3  
\end{equation*}
5. By Fubini's theorem, the order of integration does not matter. Nevertheless, it is often convenient to integrate successively with respect to $x_1$ and then $x'$. Thus, abusing the notation, we write:
\begin{equation*}
\int_{\Omega} f \, dx:=\int_{\mathbb{T}^2}\int_{\mathbb{R}_+} f \, dx_1dx'.
\end{equation*}
6. Unless otherwise specified, the integration domain for the norms is assumed to be $\Omega$. For example, $\| \cdot \|_{L^3}$ stands for the $L^3(\Omega)$ norm. In addition, the norm without a subscript, $\| \cdot \|$, denotes the $L^2(\Omega)$ norm, that is,
\[\|\cdot\|:=\|\cdot\|_{L^2(\Omega)}.\]

\section{Zeroth order estimates}\label{sec:4}
\setcounter{equation}{0}

To deal with the viscous shock efficiently, we bring in the effective velocity $\mathbf{h}:=\mathbf{u}-(2\mu+\lambda)\nabla_xv$. Thus, we rewrite the system \eqref{changed main} in terms of the $(v, h)$-coordinate as in \cite{lee4923620long}:

\begin{equation} \label{4.1}
\begin{cases}
\rho\left(\partial_tv+\mathbf{u}\cdot\nabla_xv\right)-\operatorname{div}_x\mathbf{h}=(2\mu+\lambda)\Delta_xv, \\
\rho\left(\partial_t\mathbf{h}+\mathbf{u}\nabla_x\mathbf{h}\right)+\nabla_xp(v)=R, \\
\end{cases}
\end{equation}
with the initial datum
\beq \label{ninini}
(v, \mathbf{h})|_{t=0}=(v_0, \mathbf{h}_0) \to (v_{+}, \mathbf{u}_{+}), \quad as \quad x_1 \to +\infty , 
\eeq
where
\begin{equation} \label{presentR}
R=\frac{2\mu+\lambda}{v}(\nabla_x\mathbf{u} \nabla_xv - \operatorname{div}_x\mathbf{u}\nabla_xv)-\mu \nabla_x \times \left(\nabla_x \times \mathbf{u}\right).
\end{equation}

We can also rewrite the viscous shock equations \eqref{changed viscous shock equation}-\eqref{change shock equation 2} as  
\begin{equation}\label{viscous-shock-h}
\begin{cases}
-\sigma^*\tilde{v}'-\tilde{h}'=(2\mu+\lambda)\tilde{v}'', \qquad ':=\frac{d}{d\xi},\\
-\sigma^* \tilde{h}' +p(\tilde{v})'=0,\\
(\tv,\tilde{h})(-\infty) = (v_-,u_{-}),\quad (\tv,\tilde{h})(+\infty) = (v_+,u_{+}),
\end{cases}
\end{equation} 
where $\tilde{h}:=\tilde{u}-(2\mu+\lambda)\tilde{v}'$.

In terms of notation, we denote
\[\tilde{\mathbf{h}}:=(\tilde{h}, 0, 0).\]

Finally, from \eqref{4.1} and \eqref{viscous-shock-h} we have the perturbation equations:
\begin{equation} \label{mixed equations}
\begin{cases}
\rho(v-\tilde{v})_t+\rho \mathbf{u}\cdot \nabla_x(v-\tilde{v})-\operatorname{div}_x(\mathbf{h}-\tilde{\mathbf{h}})-\rho \dot{X}(t)\tilde{v}'+F\tilde{v}'=(2\mu+\lambda)\Delta_x(v-\tilde{v}), \\
\rho(\mathbf{h}-\tilde{\mathbf{h}})_t+\rho \mathbf{u} \nabla_x(\mathbf{h}-\tilde{\mathbf{h}})+\nabla_x(p(v)-p(\tilde{v}))-\rho \dot{X}(t)\tilde{\mathbf{h}}_{x_1}+F\tilde{\mathbf{h}}_{x_1}=R,
\end{cases}
\end{equation}
where 
\begin{align}
\begin{aligned} \label{presentF}
F&=-\sigma(\rho-\tilde{\rho})+\rho u_1 - \tilde{\rho} \tilde{u}\\
&=-\frac{\sigma^*}{\tilde{\rho}}(\rho-\tilde{\rho})+\rho(u_1-\tilde{u}).\\
\end{aligned}
\end{align}

Define the relative entropy $\eta$ as
\begin{equation*}
\eta(U|\tilde{U}):=Q(v|\tilde{v})+\frac{|\mathbf{h}-\tilde{\mathbf{h}}|^2}{2}.    
\end{equation*}

In this section, we prove the following lemma, which plays a crucial role in this paper. 

\begin{lemma} \label{lemma5.1}
There exist positive constants $C, \nu$, and $C_\nu$ such that for any $t\in\left[0, T\right]$,
\begin{align}
\begin{aligned} \label{lemma4.1eq}
&\int_{\Omega}\eta(U|\tilde{U})(t,x) \, dx+\int_{0}^{t} \left( \delta|\dot{X}(\tau)|^2+\mathcal{G}_1(\tau)+\mathcal{G}_3(\tau)+\mathcal{G}^S(\tau)+\mathcal{D}(\tau) \right) \, d\tau \\
\le & C\int_{\Omega} \eta(U|\tilde{U})(0,x) \,dx\\
&+(\nu+C\delta+C\varepsilon)\int_{0}^{t}||\nabla_x(\mathbf{u}-\tilde{\mathbf{u}})||^2_{H^1}\, d\tau+Ce^{-C\delta \beta}+C_\nu\int_{0}^{t}\int_{\mathbb{T}^2} |\nabla_{x'}(v-\tilde{v})|^2 \bigg|_{x_1=0} \, dx'\,d\tau,
\end{aligned}
\end{align}
where $\nu$ is chosen sufficiently small, $C_\nu$ depends only on $\nu$, and 
\begin{equation*}
\begin{aligned}
\mathcal{G}_1&:=\int_{\Omega} |a_{x_1}| \left| h_1-\tilde{h}-\frac{p(v)-p(\tilde{v})}{\sigma^*} \right|^2\, dx, \\ 
\mathcal{G}_3&:=\int_{\Omega} |a_{x_1}| (h_2^2+h_3^2)\, dx,\\
\mathcal{G}^S&:= \int_{\Omega} |\tilde{v}_{x_1}| |p(v)-p(\tilde{v}) |^2 \,dx, \\
\mathcal{D}&:=\int_{\Omega} |\nabla_x(p(v)-p(\tilde{v}))|^2\, dx.
\end{aligned}    
\end{equation*}
\end{lemma}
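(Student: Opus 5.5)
We will prove Lemma~\ref{lemma5.1} by the weighted relative entropy method with shift. The quantity we track is $\int_\Omega a\,\rho\,\eta(U|\tilde U)\,dx$; note that $1\le a\le 5/4$ and that $\rho$ is close to $\rho_+$. We differentiate this in time using the perturbation system \eqref{mixed equations}. The transport terms $\rho\partial_t+\rho\mathbf u\cdot\nabla_x$, together with the continuity equation, turn the time derivative into a flux divergence. That divergence yields three things: the weight terms involving $a_{x_1}$, the shift terms involving $\dot X\,a_{x_1}$, and a boundary flux at $x_1=0$ of the form $\int_{\mathbb T^2}(-\rho u_1)a\,\eta|_{x_1=0}$. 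We then group the result as
\[
\frac{d}{dt}\int a\rho\eta = \dot X\,\mathbf Y(U) + \mathbf B(U) - \mathbf G(U) + \text{boundary terms},
\]
following \cite{KV21,KVW23} and the multi-d adaptation of \cite{wang2022nonlinear,lee4923620long}. Here $\mathbf Y$ collects the terms linear in the perturbation that multiply $\dot X$. The choice of $X$ in \eqref{X(t)} makes $\dot X\,\mathbf Y\le -\frac{\delta}{2M}|\dot X|^2+{}$(controllable terms), which produces the good term $\delta|\dot X|^2$. The good part $\mathbf G$ contains three pieces:
\begin{itemize}
\item the weighted $h_1$-term $\mathcal G_1$, coming from $-\sigma a_{x_1}|h_1-\tilde h|^2/2$ after completing the square against $(p(v)-p(\tilde v))/\sigma^*$;
\item the transverse term $\mathcal G_3$, coming from the same weight acting on $h_2,h_3$, since $\tilde h_2=\tilde h_3=0$;
\item the diffusion $\mathcal D$, obtained from $(2\mu+\lambda)\Delta_x(v-\tilde v)$ tested against $a(p(v)-p(\tilde v))$, using $p'<0$.
\end{itemize}

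The central step is the sharp weighted Poincaré argument that bounds the bad terms of $\mathbf B$ by $\mathcal G_1+\mathcal D$. These bad terms are the cubic pressure term $\int a_{x_1}|p(v)-p(\tilde v)|^2$ and the terms involving $\tilde v_{x_1}p(v|\tilde v)$. On the region where $|p(v)-p(\tilde v)|$ is small, we change variables $y_1=(p(v_-)-p(\tilde v(x_1-\sigma t-X-\beta)))/\delta$, which maps the shock onto a subinterval $[\alpha,1]$; here $\alpha>0$ is tiny, because the shock sits at distance $\beta$ from the boundary. We then apply Lemma~\ref{lem-poin} to $f=p(v)-p(\tilde v)$ in $(y_1,x')$, together with the relative-quantity expansions \eqref{est-rel-3}. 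This gives the negative definite form
\[
-\frac{C}{\sqrt\delta}\int|\tilde v_{x_1}|\,\bigl|p(v)-p(\tilde v)-\overline{\cdot}\bigr|^2+\dots,
\]
exactly as in \cite{KV21}. The resulting leftover is the good term $\mathcal G^S$. Two further inputs close this step. Truncation handles the region where the perturbation is large; since $\|v-\tilde v\|_{L^\infty}\le C\varepsilon$, that region is absent anyway. The tangential gradients $|\nabla_{x'}f|^2/(y_1(1-y_1))$ are absorbed by $\mathcal D$, after multiplying back by the Jacobian. For the remaining parabolic terms we argue as follows. The terms $R$ and $F\tilde v'$, $F\tilde h_{x_1}$ are bounded by $C(\delta+\varepsilon)(\mathcal G^S+\mathcal D+\|\nabla_x(\mathbf u-\tilde{\mathbf u})\|_{H^1}^2)$, using Lemma~\ref{GNIsevera} and $|\tilde v'|\le C\delta^2$. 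The curl term $\mu\nabla\times\nabla\times\mathbf u$ is integrated by parts via Lemma~\ref{vectori}. This contributes $\nu\|\nabla(\mathbf u-\tilde{\mathbf u})\|_{H^1}^2$ plus a boundary piece.

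The main obstacle is the set of boundary terms at $x_1=0$. At the boundary, $\mathbf u=\mathbf u_-$, while the profile satisfies $|\tilde u-u_-|+|\tilde v-v_-|+|\tilde v'|\le C\delta e^{-C\delta(\sigma t+X+\beta)}$. Hence every boundary term carrying a profile factor is bounded by $e^{-C\delta\beta}$, after integrating in time with $\sigma t+X(t)\ge ct$; this uses $|\dot X|\le C\varepsilon$ from \eqref{bddx12}. The transport flux $\int_{\mathbb T^2}-\rho u_-a\eta$ is nonnegative because $u_-<0$. It therefore appears as a good boundary term, which we may simply drop. The genuinely problematic pieces are the diffusive fluxes:
\begin{itemize}
\item $(2\mu+\lambda)\int_{\mathbb T^2}a(p(v)-p(\tilde v))\partial_{x_1}(v-\tilde v)$;
\item the flux term $\int_{\mathbb T^2}(h_1-\tilde h)\,a(p(v)-p(\tilde v))$ from $\operatorname{div}\mathbf h$;
\item the curl flux.
\end{itemize}
At the boundary, $h_1-\tilde h=-(2\mu+\lambda)\partial_{x_1}(v-\tilde v)+O(e^{-C\delta\beta})$ and $h'-\tilde h'=-(2\mu+\lambda)\nabla_{x'}(v-\tilde v)$. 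So the normal-derivative traces combine and cancel up to exponentially small errors. We will check this cancellation explicitly first. The terms that survive involve $\nabla_{x'}(v-\tilde v)|_{x_1=0}$, paired either with the trace of $v-\tilde v$ or with tangential velocity gradients. By Young's inequality and the trace theorem (with $H^1$ smallness), these are bounded by $\nu\|\nabla_x(\mathbf u-\tilde{\mathbf u})\|_{H^1}^2+C_\nu\int_{\mathbb T^2}|\nabla_{x'}(v-\tilde v)|^2|_{x_1=0}$ plus the exponential error. Finally, we integrate in time, use $\eta\sim$ the weighted quantity, and obtain \eqref{lemma4.1eq}.
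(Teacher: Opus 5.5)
Your overall architecture is the paper's. You use the same weighted relative entropy $\int_\Omega a\rho\,\eta(U|\tilde U)\,dx$ in the $(v,\mathbf h)$ variables with the shift \eqref{X(t)}, and your boundary bookkeeping is essentially the paper's:
\begin{itemize}
\item the transport fluxes have a good sign because $u_-<0$, so they can be dropped;
\item the normal traces coming from $\operatorname{div}_x\mathbf h$ and from $(2\mu+\lambda)\Delta_x v$ recombine exactly into $\int_{\mathbb T^2}a(p(v)-p(\tilde v))(u_1-\tilde u)|_{x_1=0}\,dx'$, which is $O(\varepsilon\delta e^{-C\delta t-C\delta\beta})$;
\item the only non-small trace is the curl flux. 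There $\nabla_{x'}(v-\tilde v)$ meets $\partial_{x_1}(u_2,u_3)$, because tangential derivatives of $\mathbf u-\tilde{\mathbf u}$ vanish on $\{x_1=0\}$. This term is split by Young and by $|f(0)|^2\le 2\|f\|_{L^2(\mathbb R_+)}\|f'\|_{L^2(\mathbb R_+)}$.
\end{itemize}
No trace pairing $\nabla_{x'}(v-\tilde v)$ with $v-\tilde v$ itself ever occurs.

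The genuine problem is the central step. You bound the leading bad terms by $\mathcal G_1+\mathcal D$ through a Poincar\'e estimate of strength $\delta^{-1/2}\int|\tilde v_{x_1}||\cdot|^2$, and the scales do not permit this.
\begin{itemize}
\item Since $a_{x_1}=|p(\tilde v)_{x_1}|/\sqrt\delta$, in the variable $y$ one has exactly $\mathcal B_1=\frac{1}{2\sigma^*}\int_\Omega a_{x_1}|p(v)-p(\tilde v)|^2dx=\frac{\sqrt\delta}{2\sigma^*}\int_{\mathbb T^2}\int_{y_0}^1|w|^2\,dy\,dx'$.
\item By contrast, $\mathcal D$ together with Lemma~\ref{lem-poin} controls only about $\delta\alpha_+\int\!\!\int(y-y_0)(1-y)|\partial_yw|^2\gtrsim\delta\int\!\!\int|w-\bar w|^2$. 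That is an $O(1)$ multiple of $\int|\tilde v_{x_1}||w-\bar w|^2$: a factor $\sqrt\delta$ too weak, and blind to the mean.
\item $\mathcal G_1$ cannot help, since it only sees $h_1-\tilde h-(p(v)-p(\tilde v))/\sigma^*$. Its real job is the $|\tilde v_{x_1}||h_1-\tilde h|^2$ pieces of $\mathcal B_3$, $\mathcal B_4$ and of $\frac1\delta|Y_i|^2$.
\end{itemize}

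The missing ingredient is a fourth good term you omitted, $\mathcal G_2=\sigma^*\int_\Omega a_{x_1}Q(v|\tilde v)\,dx$. By \eqref{est-rel-3} it cancels $\mathcal B_1$ to leading order, leaving $C\delta^{3/2}\int\!\!\int|w|^2$ plus the cubic term $\mathcal K$. Only $\mathcal B_2\le\delta\alpha_+(1+C\sqrt\delta+C\varepsilon)\int\!\!\int|w|^2$ then remains, and at this scale the sharp constant $\frac12$ in Lemma~\ref{lem-poin} closes the estimate.

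Even so, the mean must be paid for by the shift. By \eqref{X(t)}, $\dot X\approx-\frac{2M}{\sigma_+^2v_+}\int\!\!\int w$, so $-\frac{\delta}{2M}|\dot X|^2\le-\frac{M\delta}{\sigma_+^4v_+^2}\bigl(\int\!\!\int w\bigr)^2+\dots$. The constant $M=\frac32\sigma_+^4v_+^2\alpha_+$ is chosen precisely to offset the $\frac32\delta\alpha_+\bigl(\int\!\!\int w\bigr)^2$ produced by the Poincar\'e identity. Half of $-\frac{\delta}{M}|\dot X|^2$ is consumed here, so the shift is an essential input to this step, not merely a by-product good term. The output of the step is $-C_1\mathcal G^S$, at scale $\delta$ in $y$, not $\delta^{-1/2}$.
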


\subsection{Computation of the weighted relative entropy} 

\

To obtain the desired result, we first compute the weighted relative entropy and initially categorize the terms. The proof of the following lemma is essentially the same as those in \cite{lee4923620long, wang2022nonlinear}, except for the additional boundary terms that appear here. Thus, we omit the details.

\begin{lemma}\label{lem:rel-ent}
It holds
\begin{align}
	\begin{aligned}\label{est-rel-ent}
	&\frac{d}{dt}\int_{\Omega} a \rho \eta(U|\tilde{U}) \,dx=\dot{X}(t)Y(U)+\mathcal{J}^{\rm bad}(U)-\mathcal{J}^{\rm good}(U)+\mathcal{P}(U),
	\end{aligned}
\end{align}
where
\begin{align}
\begin{aligned} \label{before diffusionterm}
\dot{X}(t)Y(U)&=-\int_{\Omega}\rho Q(v|\tilde{v})\dot{X}(t)a_{x_1}dx-\int_{\Omega}a\rho p'(\tilde{v})(v-\tilde{v})\dot{X}(t)\tilde{v}_{x_1}dx, \\ 
\mathcal{J}^{\rm bad}(U)&=\int_{\Omega}Q(v|\tilde{v}) F a_{x_1}dx-\int_{\Omega}ap(v|\tilde{v}) F \tilde{v}_{x_1}dx\\
& \quad +\int_{\Omega}ap(v|\tilde{v}) \sigma^* \tilde{v}_{x_1}dx+\int_{\Omega}a(p(v)-p(\tilde{v})) F\tilde{v}_{x_1}dx \\ 
& \quad -(2\mu+\lambda)\int_{\Omega}a\partial_{x_1}(p(v)-p(\tilde{v})) \partial_{x_1}p(\tilde{v})\left( \frac{1}{\gamma p(v)^{1+\frac{1}{\gamma}}}- \frac{1}{\gamma p(\tilde{v})^{{1+\frac{1}{\gamma}}}}\right)dx\\
& \quad -(2\mu+\lambda)\int_{\Omega}a_{x_1}(p(v)-p(\tilde{v}))\frac{\partial_{x_1}(p(v)-p(\tilde{v}))}{\gamma p(v)^{1+\frac{1}{\gamma}}} dx\\ 
& \quad -(2\mu+\lambda)\int_{\Omega}a_{x_1}(p(v)-p(\tilde{v}))\partial_{x_1}p(\tilde{v})\left( \frac{1}{\gamma p(v)^{1+\frac{1}{\gamma}}}- \frac{1}{\gamma p(\tilde{v})^{{1+\frac{1}{\gamma}}}}\right) dx, \\ 
&\quad +\int_{\Omega}a(\mathbf{h}-\tilde{\mathbf{h}})\cdot R \, dx \\
\mathcal{J}^{\rm good}(U)&=\int_{\Omega} \sigma^* a_{x_1} Q(v|\tilde{v})  dx+(2\mu+\lambda)\int_{\Omega}a\frac{|\nabla_x(p(v)-p(\tilde{v}))|^2}{\gamma p(v)^{1+\frac{1}{\gamma}}}dx,\\
\mathcal{P}(U)&=\int_{\mathbb{T}^2} a\left(\rho u_1 \left[Q(v|\tilde{v}) + \frac{|\mathbf{h}-\tilde{\mathbf{h}}|^2}{2}\right]+(p(v)-p(\tilde{v})(u_1-\tilde{u})\right)\Bigg|_{x_1=0} dx'. 
\end{aligned}
\end{align}
\end{lemma}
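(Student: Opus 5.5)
The identity will come from differentiating $\int_\Omega a\rho\,\eta(U|\tilde U)\,dx$ in time and rewriting every term with the perturbation system \eqref{mixed equations}. The profile equations \eqref{viscous-shock-h} and the continuity equation $\rho_t+\operatorname{div}_x(\rho\mathbf u)=0$ are used along the way. The interior part follows the whole-space computation of \cite{lee4923620long, wang2022nonlinear}. The only new feature is that each integration by parts in $x_1$ now leaves a trace at $x_1=0$; traces at $x_1=+\infty$ vanish by the far-field conditions, and integrations in $x'$ leave none by periodicity. So the plan is to redo that computation while tracking these traces and showing they collapse to $\mathcal P(U)$.

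First I would use the Reynolds-transport structure. For $G=a\,\eta(U|\tilde U)$, the continuity equation gives
\[
\frac{d}{dt}\int_\Omega\rho G\,dx=\int_\Omega\rho(\partial_t+\mathbf u\cdot\nabla_x)G\,dx-\int_{\mathbb T^2}\rho u_1G\big|_{x_1=0}\,dx'.
\]
The time derivative of $a=a(x_1-\sigma t-X(t)-\beta)$ contributes $-(\sigma+\dot X)a_{x_1}$. Combining it with $\rho u_1a_{x_1}$ through the identity $\rho(u_1-\sigma)=F+\tilde\rho(\tilde u-\sigma)=F-\sigma^*$, taken from \eqref{changed RH condition}, yields the terms $-\dot X\rho Q\,a_{x_1}$, $Q F a_{x_1}$ and $-\sigma^*a_{x_1}Q$. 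These are the corresponding pieces of $Y$, $\mathcal J^{\rm bad}$ and $\mathcal J^{\rm good}$. The same bookkeeping applied to $\tilde v,\tilde{\mathbf h}$ produces the $F\tilde v_{x_1}$ and $\dot X\tilde v_{x_1}$ terms.

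Next I would handle the material derivatives. Expanding $\rho(\partial_t+\mathbf u\cdot\nabla)Q(v|\tilde v)=\rho(Q'(v)-Q'(\tilde v))(\partial_t+\mathbf u\cdot\nabla)(v-\tilde v)+\dots$, with $Q'=-p$, and inserting \eqref{mixed equations}$_1$ gives three contributions. The first is a term $-(p(v)-p(\tilde v))\operatorname{div}(\mathbf h-\tilde{\mathbf h})$. The second is the diffusion term $-(2\mu+\lambda)(p(v)-p(\tilde v))\Delta(v-\tilde v)$. The third is a set of terms with $p(v|\tilde v)$ coming from $-p'(\tilde v)(v-\tilde v)$. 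Similarly, dotting \eqref{mixed equations}$_2$ with $a(\mathbf h-\tilde{\mathbf h})$ gives $-a(\mathbf h-\tilde{\mathbf h})\cdot\nabla(p(v)-p(\tilde v))$ plus $R$, $F$ and $\dot X$ terms.

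The pressure–flux pair combines into
\[
-\int_\Omega a\operatorname{div}\big((p(v)-p(\tilde v))(\mathbf h-\tilde{\mathbf h})\big)\,dx .
\]
Integrating by parts gives an $a_{x_1}$ term and a boundary trace $a(p(v)-p(\tilde v))(h_1-\tilde h)|_{x_1=0}$.

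For the diffusion term I would write $\nabla v=-\nabla p(v)/(\gamma p(v)^{1+1/\gamma})$ as in the whole-space case. This produces the good term $(2\mu+\lambda)a|\nabla(p(v)-p(\tilde v))|^2/(\gamma p^{1+1/\gamma})$ together with the listed bad terms. It also leaves a trace $-(2\mu+\lambda)a(p(v)-p(\tilde v))\partial_{x_1}(v-\tilde v)|_{x_1=0}$. Since $h_1-\tilde h=(u_1-\tilde u)-(2\mu+\lambda)\partial_{x_1}(v-\tilde v)$, this trace combines with the previous one to give exactly $a(p(v)-p(\tilde v))(u_1-\tilde u)|_{x_1=0}$.

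The main obstacle is this sign and trace bookkeeping. I must check that the $\partial_{x_1}(v-\tilde v)$ traces cancel exactly, so that only $u_1-\tilde u$ survives. I must also check that the transport trace $\rho u_1\eta$ appears with the sign stated in $\mathcal P$, which fixes the orientation convention for the boundary at $x_1=0$. Finally, the tangential pieces of the diffusion term must produce no trace, which holds because they are integrated in $x'$ over $\mathbb T^2$.
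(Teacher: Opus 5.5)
Your plan follows the paper's route: the paper simply defers to the whole-space computation and adds the boundary traces. Your interior bookkeeping, including $\rho(u_1-\sigma)=F-\sigma^*$, is right. However, as written, the trace bookkeeping contains sign errors, and that bookkeeping is the only new content of this lemma. On $\Omega=\mathbb{R}_+\times\mathbb{T}^2$ the outward normal at $x_1=0$ is $(-1,0,0)$, so $\int_\Omega\operatorname{div}_x\mathbf{G}\,dx=-\int_{\mathbb{T}^2}G_1|_{x_1=0}\,dx'$. Therefore
\[
\frac{d}{dt}\int_\Omega\rho G\,dx=\int_\Omega\rho(\partial_t+\mathbf{u}\cdot\nabla_x)G\,dx+\int_{\mathbb{T}^2}\rho u_1G\big|_{x_1=0}\,dx',
\]
with a plus sign. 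As a check, take $G\equiv 1$: under outflow, $u_-<0$, the mass must decrease. Your minus sign would turn the transport trace into $-u_-\int_{\mathbb{T}^2}a\rho\,\eta\,dx'\ge 0$. That is the opposite of $\mathcal{P}_1+\mathcal{P}_2$ in the statement, and it would destroy the favorable outflow sign the paper uses right afterwards.

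The diffusion trace has the same problem. With $w:=p(v)-p(\tilde v)$, the term $-(2\mu+\lambda)\int_\Omega a w\,\Delta_x(v-\tilde v)\,dx$ leaves the trace $+(2\mu+\lambda)\int_{\mathbb{T}^2}a w\,\partial_{x_1}(v-\tilde v)\big|_{x_1=0}dx'$, not the one with a minus sign. With your sign, adding it to the pressure--flux trace $+\int_{\mathbb{T}^2}aw(h_1-\tilde h)\big|_{x_1=0}dx'$ gives $aw(u_1-\tilde u)-2(2\mu+\lambda)aw\,\partial_{x_1}(v-\tilde v)$. So the cancellation you assert does not occur. If you use the outward-normal convention consistently, all three traces enter with a plus sign. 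Then $h_1-\tilde h=(u_1-\tilde u)-(2\mu+\lambda)\partial_{x_1}(v-\tilde v)$ makes them sum exactly to $\mathcal{P}(U)$.

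A secondary point: your computation also produces several terms that do not appear in the displayed $Y$, $\mathcal{J}^{\rm bad}$, $\mathcal{J}^{\rm good}$. These are $\tfrac12\int_\Omega Fa_{x_1}|\mathbf{h}-\tilde{\mathbf{h}}|^2dx$, $-\tfrac{\sigma^*}{2}\int_\Omega a_{x_1}|\mathbf{h}-\tilde{\mathbf{h}}|^2dx$, $-\tfrac{\dot X}{2}\int_\Omega\rho a_{x_1}|\mathbf{h}-\tilde{\mathbf{h}}|^2dx$, $\dot X\int_\Omega a\rho\tilde h_{x_1}(h_1-\tilde h)\,dx$, $-\int_\Omega aF\tilde h_{x_1}(h_1-\tilde h)\,dx$ and $\int_\Omega a_{x_1}w(h_1-\tilde h)\,dx$. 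They are genuinely present. The paper reinstates them immediately afterwards: in the expanded form of $Y$, in $\mathcal{B}_3$ and $\mathcal{B}_4$, and in the maximization step built on $a_{x_1}w(h_1-\tilde h)$. You should state and verify the identity in that completed form, rather than claim your terms are exactly the displayed pieces.
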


\subsection{Maximization and Decomposition}

\

In this subsection, we intrinsically decompose certain terms in Lemma \ref{lem:rel-ent}. In particular, we employ a maximization argument to obtain a quadratic structure.

In $\mathcal{J}^{\rm bad}(U)$, the most important term is
\[\int_{\Omega}a_{x_1}(p(v)-p(\tilde{v}))(h_1-\tilde{h})\,dx.\] 
In this term, $p(v)-p(\tilde{v})$ and $(h_1-\tilde{h})$ are mixed. It is indispensible that $(h_1-\tilde{h})$ is separated from $(p(v)-p(\tilde{v}))$ as
\begin{align*}
	a_{x_1}&(p(v)-p(\tilde{v}))(h_1-\tilde{h})-\frac{\sigma^*}{2}a_{x_1}|\mathbf{h}-\tilde{\mathbf{h}}|^2\\
	&=-\frac{\sigma^*a_{x_1}}{2}\left|h_1-\tilde{h}-\frac{p(v)-p(\tilde{v})}{\sigma^*}\right|^2+\frac{a_{x_1}}{2\sigma^*}|p(v)-p(\tilde{v})|^2-\sigma^* a_{x_1} \left( \frac{h_2^2+h_3^2}{2} \right),
\end{align*}
we rewrite the terms $\mathcal{J}^{\rm bad}(U)-\mathcal{J}^{\rm good}(U)$ in the above lemma \eqref{est-rel-ent} as $\mathcal{J}^{\text{bad}}(U) - \mathcal{J}^{\text{good}}(U) = \mathcal{B}(U)-\mathcal{G}(U)$ and name each term of $\mathcal{B}(U)$, $\mathcal{G}(U)$, and $\mathcal{P}(U)$ as follows:

\begin{align*}
	&\mathcal{B}(U) =\sum_{i=1}^8 \mathcal{B}_i(U),\\
	&\mathcal{G}(U) = \mathcal{G}_1(U) + \mathcal{G}_2(U) + \mathcal{G}_3(U)  + \mathcal{D}(U), \\
    &\mathcal{P}(U) = \mathcal{P}_1(U)+\mathcal{P}_2(U)+\mathcal{P}_3(U),
\end{align*}
where
\begin{align*}
 & \mathcal{B}_1(U):=\frac{1}{2\sigma^*} \int_{\Omega}a_{x_1} |p(v)-p(\tilde{v})|^2 \, dx\\ 
 & \mathcal{B}_2(U):=\sigma^* \int_{\Omega}  ap(v|\tilde{v}) \tilde{v}_{x_1} \, dx\\
 & \mathcal{B}_3(U):=\int_{\Omega} aF\left(p'(\tilde{v})(v-\tilde{v}) \tilde{v}_{x_1}-a(h_1-\tilde{h})\tilde{h}_{x_1}\right) \, dx \\
 & \mathcal{B}_4(U):= \int_{\Omega}    a_{x_1}F\left( Q(v|\tilde{v})+\frac{|\mathbf{h}-\tilde{\mathbf{h}}|^2}{2} \right)\, dx \\
 & \mathcal{B}_5(U):=-(2\mu+\lambda)  \int_{\Omega} a\partial_{x_1}(p(v)-p(\tilde{v})) \partial_{x_1}p(\tilde{v})\left( \frac{1}{\gamma p(v)^{1+\frac{1}{\gamma}}}- \frac{1}{\gamma p(\tilde{v})^{{1+\frac{1}{\gamma}}}}\right) \, dx \\ 
 & \mathcal{B}_6(U):=-(2\mu+\lambda)   \int_{\Omega} a_{x_1} (p(v)-p(\tilde{v}))\frac{\partial_{x_1}(p(v)-p(\tilde{v}))}{\gamma p(v)^{1+\frac{1}{\gamma}}} \, dx \\ 
 & \mathcal{B}_7(U):=-(2\mu+\lambda) \int_{\Omega} a_{x_1}(p(v)-p(\tilde{v}))\partial_{x_1}p(\tilde{v})\left( \frac{1}{\gamma p(v)^{1+\frac{1}{\gamma}}}- \frac{1}{\gamma p(\tilde{v})^{{1+\frac{1}{\gamma}}}}\right)  \, dx \\ 
 & \mathcal{B}_8(U):= \int_{\Omega}a(\mathbf{h}-\tilde{\mathbf{h}})\cdot R\, dx, \quad \quad (R \ was \ defined \ in \ \eqref{presentR})
\end{align*}
\begin{align*}
& \mathcal{G}_1(U):=\frac{\sigma^*}{2} \int_{\Omega} a_{x_1} \left| h_1-\tilde{h}-\frac{p(v)-p(\tilde{v})}{\sigma^*} \right|^2\, dx, \\
&\mathcal{G}_2(U):=\sigma^* \int_{\Omega} a_{x_1}Q(v|\tilde{v}) \, dx \\
& \mathcal{G}_3(U):= \frac{\sigma^*}{2} \int_{\Omega} a_{x_1} (h_2^2+h_3^2)\, dx, \\
&\mathcal{D}(U):=(2\mu+\lambda) \int_{\Omega} a\frac{|\nabla_x(p(v)-p(\tilde{v}))|^2}{\gamma p(v)^{1+\frac{1}{\gamma}}}\, dx,
\end{align*}
and
\begin{align*}
& \mathcal{P}_1(U):=\int_{\mathbb{T}^2} a\rho u_1 Q(v|\tilde{v})  \Bigg|_{x_1=0} \, dx', \\
&\mathcal{P}_2(U):=\int_{\mathbb{T}^2} a\rho u_1 \frac{|\mathbf{h}-\tilde{\mathbf{h}}|^2}{2} \Bigg|_{x_1=0} \, dx',  \\
& \mathcal{P}_3(U):= \int_{\mathbb{T}^2} a(p(v)-p(\tilde{v}))(u_1-\tilde{u}) \Bigg|_{x_1=0} \, dx'. \\
\end{align*}

In short, we estimate the right-hand side of the below equation: 
\begin{equation}\label{est-1}
	\frac{d}{dt}\int_{\Omega}a\rho\eta(U|\tilde{U})\,dx = \dot{X}(t)Y(U) + \mathcal{B}(U)-\mathcal{G}(U)+\mathcal{P}(U).
\end{equation}
\begin{remark}
Since $\sigma^{*},\  a_{x_1}>0$, and $a\ge1$, $\mathcal{G}(U)$ consists of four terms with a positive sign. On the other hand, $\mathcal{B}(U)$ consists of bad terms which we have to control.
\end{remark}

For $Y(U)$ in \eqref{before diffusionterm}, we initially write it more explicitly as follows:
\begin{align*}
	Y(U)
	& = -\int_{\Omega} \rho a_{x_1} \left( Q(v|\tilde{v})+\frac{|\mathbf{h}-\tilde{\mathbf{h}}|^2}{2} \right)  \, dx  +\int_{\Omega} a\rho \tilde{h}_{x_1}(h_1-\tilde{h}) \, dx\\
	&\quad -\int_{\Omega} a\rho p'(\tilde{v})\tilde{v}_{x_1}(v-\tilde{v}) \, dx.
\end{align*}
We decompose $Y_i$ as below:
\[Y = \sum_{i=1}^5 Y_{i},\]
where
\begin{align*}
	&Y_{1} :=\int_{\Omega}  \frac{a}{\sigma^*}\rho\tilde{h}_{x_1} (p(v)-p(\tilde{v}))  \, dx,\\
	&Y_{2} := - \int_{\Omega} a\rho p(\tilde{v})_{x_1}(v-\tilde{v})  \, dx,\\
	&Y_{3} := \int_{\Omega} a\rho\tilde{h}_{x_1} \left( h_1-\tilde{h}-\frac{p(v)-p(\tilde{v})}{\sigma^*} \right) \, dx\\
	&Y_{4} :=-\frac{1}{2} \int_{\Omega} \rho a_{x_1} \left( h_1-\tilde{h} - \frac{p(v)-p(\tilde{v})}{\sigma^*} \right)\left( h_1-\tilde{h} + \frac{p(v)-p(\tilde{v})}{\sigma^*} \right) \, dx\\
	&Y_{5} :=-\int_{\Omega} \rho a_{x_1} \left(Q(v|\tilde{v})+\frac{|p(v)-p(\tilde{v})|^2}{2(\sigma^*)^2}+\frac{h_2^2+h_3^2}{2} \right) \, dx.
\end{align*}
From construction of the shift $X$ in \eqref{X(t)}, we have
\[\dot{X} = -\frac{M}{\delta}(Y_{1}+Y_{2}),\]
which implies 
\begin{equation}\label{XiYi}
	\dot{X}Y(U) = -\frac{\delta}{M}|\dot{X}|^2+\dot{X}\sum_{i=3}^5Y_{i}.
\end{equation}
Combining \eqref{est-1} and \eqref{XiYi}, we get
\begin{align}
	\begin{aligned}\label{est}
	&\frac{d}{dt} \int_{\Omega} a\rho \eta (U| \tU) \,dx =\mathcal{R}, \quad\mbox{where}\\
	&\mathcal{R} :=-\frac{\delta}{M}|\dot{X}|^2 + \left(\dot{X}\sum_{i=3}^5Y_{i}\right) +\sum_{i=1}^8\mathcal{B}_i -\mathcal{G}_1-\mathcal{G}_2-\mathcal{G}_3-\mathcal{D}+\mathcal{P}. \\
	&\quad= \underbrace{-\frac{\delta}{2M}|\dot{X}|^2 + \mathcal{B}_1 +\mathcal{B}_2 -\mathcal{G}_2-\frac{3}{4}\mathcal{D}}_{=:\mathcal{R}_1}+\mathcal{P}\\
	&\quad\quad \underbrace{-\frac{\delta}{2M}|\dot{X}|^2 + \left(\dot{X}\sum_{i=3}^5Y_{i}\right) +\sum_{i=3}^8\mathcal{B}_i -\mathcal{G}_1-\mathcal{G}_3-\frac{1}{4}\mathcal{D}}_{=:\mathcal{R}_2}.
	\end{aligned}
\end{align}

We first concentrate on the estimate of $\mathcal{R}_1$.

\subsection{Estimate of the main part $\mathcal{R}_{1}$}\label{sec:4.5} 

\

Since $X(t)$ is bounded on $\left[0, T\right]$ by \eqref{dxbound}, for any fixed $t\ge0$, we consider the following substitution in space: 
\[
y:=1-\frac{p(\tilde{v}(x_1-\sigma t-X(t)-\beta))-p(v_+)}{\delta}.
\]
Obviously, $y :\bbr\to (0,1)$ is a strictly increasing function such that
\[\frac{d y}{dx_1} = -\frac{1}{\delta}p'(\tilde{v})\tilde{v}' >0,\]
and
\[\lim_{x_1\to-\infty}y = 0,\quad \lim_{x_1\to+\infty} y = 1,  \qquad and \qquad y_0:=y(0),\]
where we choose $\beta\gg1$ and so $0<y_0<\frac{1}{10}$.

In addition, to use Poincar\'e-type inequality, we adopt new variable $w$ as: for fixed any $t>0$,
 \[
 \begin{aligned}
&w(x):=p(v(t,x)) - p\left(\tv(x_1-\sigma t-X(t)-\beta)\right).
\end{aligned}
\]

For simplicity, we use the following notations to denote constants of $O(1)$-scale:
\begin{equation}\label{sigmaalpha}
\sigma_+:=\sqrt{-p'(v_+)},\quad \alpha_+:=\frac{\gamma+1}{2\gamma\sigma_+p(v_+)}=\frac{p''(v_+)}{2\sigma_+|p'(v_+)|^2}.    
\end{equation}

In addition, the following estimates on the $O(1)$-constants hold:
\begin{equation}\label{shock_speed_est}
	|\sigma^*- \sigma_+| \leq C \delta,
\end{equation}
and
\begin{equation}\label{shock_speed_est-2}
	\|\sigma_+^2+p'(\tv)\|_{L^\infty} \le C\delta, \quad \left\|\frac{1}{\sigma_+^2}-\frac{p(\tv)^{-\frac{1}{\gamma}-1}}{\gamma}\right\|_{L^\infty}\le C\delta.
\end{equation}

In this subsection, our goal is to get the below Lemma:
\begin{lemma} \label{mainlemma}
There exist the positive generic constants $C_1, C>0$ such that
\begin{align*} 
	&-\frac{\delta}{2M}|\dot{X}|^2+\mathcal{B}_1+\mathcal{B}_2-\mathcal{G}_{2}-\frac{3}{4}\mathcal{D}\\
	\le & -C_1\int_{\Omega} |\tilde{v}_{x_1}||p(v)-p(\tilde{v})|^2 dx+C\int_{\Omega}a_{x_1}|p(v)-p(\tilde{v})|^3\,dx
\end{align*}    
for all $t\in \left[0, T\right].$
\end{lemma}

\noindent $\bullet$  {\bf(Estimate of shift part $\frac{\delta}{2M}|\dot{X}|^2$):}

Using \eqref{viscous-shock-h} and change of variable, we have
	\begin{align*}
		Y_{1} 
		&=\int_{\Omega} \frac{a}{({\sigma^*})^2v}p(\tilde{v})_{x_1}(p(v)-p(\tilde{v}))\, dx =-\frac{\delta}{(\sigma^*)
    ^2}\int_{\mathbb{T}^2}  \int_{y_0}^{1} \frac{a}{v} w  \, dy \, dx'.
	\end{align*}
Using \eqref{shock_speed_est} and $\|a-1\|_{L^\infty}\le \sqrt{\delta}$, we have
	\[\left| Y_{1}+\frac{\delta}{\sigma_+^2 v_+} \int_{\mathbb{T}^2}  \int_{y_0}^{1} w \, dy \, dx' \right|\le C\delta(\delta+\sqrt{\delta})\int_{\mathbb{T}^2}  \int_{y_0}^{1} |w| \, dy \, dx' .\]
 When we estimate $Y_{2}$, we first use Taylor expansion in terms of $v=p(v)^{-1/\gamma}$ to get
\[\left|v-\tv -\left(-\frac{p(\tilde{v})^{-\frac{1}{\gamma}-1}}{\gamma}(p(v)-p(\tilde{v}))\right)\right|\le C|p(v)-p(\tilde{v})|^2,\]
	which together with the estimates \eqref{shock_speed_est-2} and \eqref{smp1} implies
	\[\left|v-\tv -\left(-\frac{1}{\sigma_+^2}(p(v)-p(\tilde{v}))\right)\right|\le C(\delta+\varepsilon)|p(v)-p(\tilde{v})|.\]	
As above,
\begin{align*}
    \left| Y_{2}-\frac{\delta}{\sigma_+^2v_+}\int_{\mathbb{T}^2}  \int_{y_0}^{1} w  \, dy \, dx' \right| \le&  C\delta(\delta+\sqrt{\delta}+\varepsilon)\int_{\mathbb{T}^2}  \int_{y_0}^{1} |w|  \, dy \, dx'.\\
\end{align*}
	Combining the above estimates, we have
	\begin{align*}
		\left| \dot{X}+\frac{2M}{\sigma_+^2v_+} \int_{\mathbb{T}^2}\int_{y_0}^{1} w \, dy \, dx' \right|&\le \frac{M}{\delta}	\left( \left| Y_{1}+\frac{\delta}{\sigma_+^2 v_+} \int_{\mathbb{T}^2}  \int_{y_0}^{1} w \, dy \, dx' \right| + 	\left| Y_{2}-\frac{\delta}{\sigma_+^2v_+}\int_{\mathbb{T}^2}  \int_{y_0}^{1} w  \, dy \, dx' \right| \right)\\
		&\le C(\sqrt{\delta}+\varepsilon)\int_{\mathbb{T}^2}  \int_{y_0}^{1} |w| \, dy \, dx', \\
	\end{align*}
	which implies by squaring both sides and using Young's inequality, 
 \begin{align}
 \begin{aligned} \label{est-X1}
\frac{2M^2}{\sigma_+^4v_+^2} \left( \int_{\mathbb{T}^2}\int_{y_0}^{1} w \, dy \, dx' \right)^2 -|\dot{X}|^2 \le & \ C(\delta+\varepsilon^2) \int_{\mathbb{T}^2}  \int_{y_0}^{1} |w|^2 \, dy \, dx'.\\ 
 \end{aligned}
 \end{align}
In short, combining the above estimate, we have following estimate on $\dot{X}$:
	\begin{equation} \label{est|x'|^2}
		-\frac{\delta}{2M}|\dot{X}|^2 \le -\frac{M\delta}{\sigma_+^4v_+^2} \left( \int_{\mathbb{T}^2}\int_{y_0}^{1} w \, dy \, dx' \right)^2 +  C\delta(\delta+\varepsilon^2) \int_{\mathbb{T}^2}  \int_{y_0}^{1} |w|^2 \, dy \, dx'.  
	\end{equation}
	
\noindent $\bullet$ {\bf Estimate of the bad term $\mathcal{B}_1$ and good term $\mathcal{G}_2$:}
Remind that
\begin{align*}
	&\mathcal{B}_1(U) := \frac{1}{2\sigma^*}\int_{\Omega}a_{x_1} |p(v)-p(\tilde{v})|^2\, dx,\\
	&\mathcal{G}_2(U):=  \sigma^*\int_{\Omega}a_{x_1}Q(v|\tilde{v})\,dx.
\end{align*}

We first apply Lemma \ref{lem-rel-quant} to obtain
	\begin{align*}
		\mathcal{G}_{2}&\ge \sigma^*\int_{\Omega}a_{x_1} \frac{p(\tilde{v})^{-\frac{1}{\gamma}-1}}{2\gamma}|p(v)-p(\tilde{v})|^2\, dx\\
		&\quad -\sigma^* \int_{\Omega}a_{x_1}\frac{(\gamma+1)}{3\gamma^2}p(\tilde{v})^{-\frac{1}{\gamma}-2}|p(v)-p(\tilde{v})|^3\,dx. \\
	\end{align*}
Let $\widehat{\mathcal{G}}_{2}$ be the good term defined as
	\[\widehat{\mathcal{G}}_{2}:=\sigma_+ \int_{\Omega}a_{x_1} \frac{p(\tilde{v})^{-\frac{1}{\gamma}-1}}{2\gamma}|p(v)-p(\tilde{v})|^2 \, dx.
	\]
Using \eqref{shock_speed_est} and \eqref{shock_speed_est-2}, we have
\begin{align*}
\mathcal{B}_{1} \leq \frac{1}{2\sigma_+} \int_{\Omega}a_{x_1} |p(v)-p(\tilde{v})|^2 \, dx+ \frac{C \delta}{2\sigma_+} \int_{\Omega}  a_{x_1} |p(v)-p(\tilde{v})|^2 \, dx,
	\end{align*}
	and
	\begin{align*}
		\widehat{\mathcal{G}}_{2} \geq \frac{1}{2\sigma_+}(1-C \delta) \int_{\Omega}a_{x_1} |\pv-\tpv|^2 \, dx.
	\end{align*}
Then we have
	\begin{align*}
		\mathcal{B}_{1} - \widehat{\mathcal{G}}_{2}
		&\leq C\delta \int_{\Omega}a_{x_1} |p(v)-p(\tilde{v})|^2 \, dx.\\
	\end{align*}
Thus, we have the desired estimate in terms of the new variables $w, y$ using the change of variables:
\begin{equation} \label{B21G22}
 \mathcal{B}_1-\widehat{\mathcal{G}}_{2}\le C \delta^{\frac{3}{2}}\int_{\mathbb{T}^2}\int_{y_0}^1|w|^2\,dy\,dx'.   
\end{equation}
\noindent $\bullet$ {\bf Estimate of the bad term $\mathcal{B}_2$:}
Remind that
\begin{align*}
	&\mathcal{B}_2(U) := \sigma^*\int_{\Omega} a\tilde{v}_{x_1}p(v|\tilde{v})\,dx.
\end{align*}

Using Lemma \ref{lem-rel-quant}, \eqref{shock_speed_est}, \eqref{shock_speed_est-2}, and the change of variables, we have
\begin{align*}	
\mathcal{B}_2(U) &=\sigma^*\int_{\Omega}a \frac{p(\tv)_{x_1}}{p'(\tv)}p(v|\tv) \,dx\\
	&\le \frac{(\gamma+1)}{2\gamma\sigma_+p(v_+)}(1+C(\sqrt{\delta}+\varepsilon)) \int_{\Omega} |p(\tv)_{x_1}|  |(p(v)-p(\tilde{v}))|^2 \,dx \\		&\le\delta\alpha_+(1+C(\sqrt{\delta}+\varepsilon))\int_{\mathbb{T}^2}\int_{y_0}^1|w|^2\,dy \,dx'.
\end{align*}

In short, combining the above estimates with \eqref{shock_speed_est} and the smallness of $\delta$ and $\varepsilon$, we have
\begin{align}
\begin{aligned} \label{L2.3}
\mathcal{B}_1-\mathcal{G}_{2} + \mathcal{B}_2	 &\le
\frac{11\delta\alpha_+}{10}\int_{\mathbb{T}^2}\int_{y_0}^1|w|^2\,dy \, dx'\\
&\qquad +     \sigma^* \int_{\Omega}a_{x_1}\frac{(\gamma+1)}{3\gamma^2}p(\tilde{v})^{-\frac{1}{\gamma}-2}|p(v)-p(\tilde{v})|^3\,dx.\\
		\end{aligned}
	\end{align}
\noindent $\bullet$ {\bf Estimate of the diffusion term $\mathcal{D}(U)$:}
At first, we separate $\mathcal{D}(U)$ into
\begin{align*}    
\mathcal{D}(U) =& (2\mu+\lambda) \int_{\Omega} a\frac{|\partial_{x_1}(p(v)-p(\tilde{v}))|^2}{\gamma p(v)^{1+\frac{1}{\gamma}}}\, dx\\
&+(2\mu+\lambda) \int_{\Omega} a\frac{|\nabla_{x'}(p(v)-p(\tilde{v}))|^2}{\gamma p(v)^{1+\frac{1}{\gamma}}}\, dx \\
=&:\mathcal{D}_1(U)+\mathcal{D}_2(U).\\
\end{align*}

Using $a\ge1$ and the change of variables, we have
\begin{align*}
\mathcal{D}_1(U)\ge & \frac{(2\mu+\lambda)}{\gamma}\int_{\Omega}\frac{|\partial_{x_1}(p(v)-p(\tilde{v}))|^2}{p(v)^{1+\frac{1}{\gamma}}}dx\\  
 = & \frac{(2\mu+\lambda)}{\gamma} \int_{\mathbb{T}^2} \int_{y_0}^{1} \frac{1}{p(v)^{1+\frac{1}{\gamma}}} |\partial_yw|^2 \frac{dy}{dx_1} dy dx'. \\
\end{align*}

Because we want to write $\mathcal{D}_1$ in terms of the variables $y$ and $w$, we apply the following inequality in \cite{KVW23} to the right hand side:
\begin{align} \label{diffusion lemma}
\left| \frac{1}{y(1-y)} \frac{(2\mu+\lambda)}{\gamma p(\tilde{v})^{\frac{1}{\gamma}+1}}  \frac{dy}{dx_1}-\delta\alpha_+\right|\le C\delta^2,
\end{align}
where $\alpha_+$ was defined in \eqref{sigmaalpha}.

So, \eqref{shock_speed_est-2} and \eqref{diffusion lemma} yield
\begin{align}
\begin{aligned} \label{estD_1}
\mathcal{D}_1\ge & \delta \left(\alpha_+-C\delta\right)\int_{\mathbb{T}^2}\int_{y_0}^{1} y(1-y) |\partial_yw|^2  dydx' \\ 
\ge & \delta \left(\alpha_+-C\delta\right)\int_{\mathbb{T}^2}\int_{y_0}^{1} (y-y_0)(1-y) |\partial_yw|^2 dydx'. \\
\end{aligned}
\end{align}

\noindent From \eqref{diffusion lemma}, we know
\[y(1-y)\frac{dx_1}{dy}\ge \frac{2\mu+\lambda}{\gamma p(\tilde{v})^{1+\frac{1}{\gamma}}\delta(\alpha_++C\delta)}\ge \frac{2\mu+\lambda}{2\alpha_+\delta|p'(v_+)|}.\]
This implies 
\begin{align} 
\begin{aligned}\label{estD_2}
\mathcal{D}_2\ge & \frac{(2\mu+\lambda)}{\gamma}\int_{\mathbb{T}^2}\int_{y_0}^1\frac{|\nabla_{x'}w|^2}{p(v)^{1+\frac{1}{\gamma}}}\left(\frac{dx_1}{dy}\right)dydx' \\
\ge & \frac{(2\mu+\lambda)^2}{\gamma} \int_{\mathbb{T}^2}\int_{y_0}^{1} \frac{|\nabla_{x'}w|^2}{p(v)^{1+\frac{1}{\gamma}}y(1-y)}\cdot\frac{1}{2\alpha_+\delta|p'(v_+)|
} dydx'\\
\ge & (1-C(\delta+\varepsilon))\frac{\sigma_+(2\mu+\lambda)^2}{|p''(v_+)|\delta}\int_{\mathbb{T}^2}\int_{y_0}^{1}\frac{|\nabla_{x'}w|^2}{y(1-y)}dydx', 
\end{aligned}
\end{align}
where we used \eqref{sigmaalpha}.

Thus, combining \eqref{estD_1} and \eqref{estD_2} with the smallness of $\delta$ and $\varepsilon$, we have
\begin{align}
\begin{aligned}\label{L2.4'}
-\mathcal{D}\le& -\frac{9\delta \alpha_+}{10}\int_{\mathbb{T}^2}\int_{y_0}^{1} (y-y_0)(1-y) |\partial_yw|^2 dydx'\\
& -\frac{9}{10}\frac{\sigma_+(2\mu+\lambda)^2}{|p''(v_+)|\delta}\int_{\mathbb{T}^2}\int_{y_0}^{1}\frac{|\nabla_{x'}w|^2}{y(1-y)}dydx'.
\end{aligned}    
\end{align}
\noindent $\bullet$ {\bf Conclusion:}
Combining the estimates \eqref{est|x'|^2}, \eqref{L2.3}, and \eqref{L2.4'} we have
\begin{align*}
	&-\frac{\delta}{2M}|\dot{X}|^2+\mathcal{B}_1+\mathcal{B}_2-\mathcal{G}_{2}-\frac{3}{4}\mathcal{D}\\
	\le & -\frac{M\delta}{\sigma_+^4v_+^2} \left( \int_{\mathbb{T}^2}\int_{y_0}^{1} w \, dy \, dx' \right)^2 +\delta\left(\frac{11\alpha_+}{10}+C(\delta+\varepsilon^2)\right)\int_{\mathbb{T}^2}\int_{y_0}^1|w|^2\,dy \, dx'\\
    &  +\sigma^* \int_{\Omega}a_{x_1}\frac{(\gamma+1)}{3\gamma^2}p(\tilde{v})^{-\frac{1}{\gamma}-2}|p(v)-p(\tilde{v})|^3\,dx\\
    & -\frac{27\delta\alpha_+}{40}\int_{\mathbb{T}^2}\int_{y_0}^{1} (y-y_0)(1-y) |\partial_yw|^2 dydx'-\frac{27\sigma_+(2\mu+\lambda)^2}{40|p''(v_+)|\delta}\int_{\mathbb{T}^2}\int_{y_0}^{1}\frac{|\nabla_{x'}w|^2}{y(1-y)}dydx'.
\end{align*}

Denoting $\mathcal{K}:=\int_{\Omega}a_{x_1}|p(v)-p(\tilde{v})|^3\,dx$ and using the smallness of $\delta$ and $\varepsilon$, we can rewrite the above inequality as
\begin{align}
\begin{aligned} \label{4.36}
	&-\frac{\delta}{2M}|\dot{X}|^2+\mathcal{B}_1+\mathcal{B}_2-\mathcal{G}_{2}-\frac{3}{4}\mathcal{D}\\
	\le & \delta\alpha_+\left[\frac{6}{5}\int_{\mathbb{T}^2}\int_{y_0}^1|w|^2\,dy \, dx'-\frac{27}{40}\int_{\mathbb{T}^2}\int_{y_0}^{1} (y-y_0)(1-y) |\partial_yw|^2 dydx'\right]\\
    &-\frac{M\delta}{\sigma_+^4v_+^2} \left( \int_{\mathbb{T}^2}\int_{y_0}^{1} w \, dy \, dx' \right)^2-\frac{27\sigma_+(2\mu+\lambda)^2}{40|p''(v_+)|\delta}\int_{\mathbb{T}^2}\int_{y_0}^{1}\frac{|\nabla_{x'}w|^2}{y(1-y)}dydx'+ C\mathcal{K}.
\end{aligned}
\end{align}

Note that the identity:
\begin{equation} \label{widentity}
\int_{\mathbb{T}^2}\int_{y_0}^1 |w-\bar {w}|^2 dydx' = \int_{\mathbb{T}^2}\int_{y_0}^1w^2 dydx' -\frac{1}{(1-y_0)}\left(\int_{\mathbb{T}^2}\int_{y_0}^{1}wdydx'\right)^2,
\end{equation}
where $\bar{w}=\frac{1}{(1-y_0)}\int_{\mathbb{T}^2}\int_{y_0}^{1}wdydx'$.

Applying Lemma \ref{lem-poin} to \eqref{4.36} with \eqref{widentity} and $y_0<\frac{1}{10}$, we have
\begin{align*}
	&-\frac{\delta}{2M}|\dot{X}|^2+\mathcal{B}_1+\mathcal{B}_2-\mathcal{G}_{2}-\frac{3}{4}\mathcal{D} \\
	\le &-\frac{3\delta\alpha_+}{20}\int_{\mathbb{T}^2}\int_{y_0}^1|w|^2\,dy\,dx' \\
	&+\frac{3\delta\alpha_+}{2}\left(\int_{\mathbb{T}^2}\int_{y_0}^1w\,dy\,dx'\right)^2-\frac{M\delta}{\sigma_+^4v_+^2} \left( \int_{\mathbb{T}^2}\int_{y_0}^{1} w \, dy \, dx' \right)^2 \\
        &\quad -\left(\frac{27\sigma_+(2\mu+\lambda)^2}{40\delta |p''(v_+)|}-\frac{27\delta\alpha_+}{320\pi^2}\right)\int_{\mathbb{T}^2}\int_{y_0}^{1}\frac{|\nabla_{x'}w|^2}{y(1-y)}dydx'+C\mathcal{K}.
\end{align*}
Finally, choosing $M=\frac{3}{2}\sigma_+^4v_+^2\alpha_+$, $C_1=\frac{3\alpha_+}{20}$ and using the smallness of $\delta$, we conclude
\begin{align}
	\begin{aligned} \label{est main lemma}
	\mathcal{R}_1=&-\frac{\delta}{2M}|\dot{X}|^2+\mathcal{B}_1+\mathcal{B}_2-\mathcal{G}_{2}-\frac{3}{4}\mathcal{D}\\
	\le & -C_1\int_{\Omega} |\tilde{v}_{x_1}||p(v)-p(\tilde{v})|^2 dx+C\int_{\Omega}a_{x_1}|p(v)-p(\tilde{v})|^3\,dx.
	\end{aligned}
\end{align}

\subsection{Estimate of the boundary term $\mathcal{P}$}

\ 

In this subsection, we estimate the boundary terms. Such terms arise naturally in half-line problems, unlike in the whole space case. Consequently, their handling poses an additional difficulty. In addition, since our setting is outflow condition, certain terms give us some good terms.

\ 

Since $a\ge 1$, $\rho > 0$, $Q(v|\tilde{v})\ge0$, and $u_1\big |_{x_1=0}=u_-<0$, we have
\begin{equation*}
\mathcal{P}_1(U)= u_-\int_{\mathbb{T}^2} a\rho Q(v|\tilde{v}) \bigg|_{x_1=0}  dx'\le 0.    
\end{equation*}

As above, we also have
\begin{equation*}
\mathcal{P}_2(U)=u_-\int_{\mathbb{T}^2} a\rho \frac{|\mathbf{h}-\tilde{\mathbf{h}}|^2}{2} \bigg|_{x_1=0}  dx' \le 0.
\end{equation*}

To estimate $\mathcal{P}_3(U)$, first note that from \eqref{lem:shock-est},
\begin{equation}\label{ushockproperty}
|u_1-\tilde{u}| \bigg|_{x_1=0}  \le C\delta e^{-C\delta t-C\delta X(t)-C\delta \beta}.    
\end{equation}

From \eqref{bddx12}, we have 
\begin{equation}\label{shiftproperrttty}
|\dot{X}(t)|\le \varepsilon  \Longrightarrow |X(t)|=\left|\int_{0}^{t}\dot{X}(\tau)d\tau\right|\le \int_{0}^{t}|\dot{X}(\tau)|d\tau \le \varepsilon t\le \frac{t}{2}.   
\end{equation}

Thus, combining \eqref{ushockproperty} and \eqref{shiftproperrttty}, we have
\begin{equation*}
|u_1-\tilde{u}| \bigg|_{x_1=0}  \le C\delta e^{-C\delta t-C\delta \beta}.    
\end{equation*}

Using this with \eqref{perturbation_small}, we have
\begin{align*}
|\mathcal{P}_3(U)|=&\left|\int_{\mathbb{T}^2} a(p(v)-p(\tilde{v}))(u_1-\tilde{u}) \bigg|_{x_1=0} dx'\right|\\
\le&C\|(p(v)-p(\tilde{v}))\|_{L^\infty}  \int_{\mathbb{T}^2} |u_1-\tilde{u}| \bigg|_{x_1=0} dx'\\
\le& C \varepsilon \delta e^{-C\delta t-C\delta \beta}.
\end{align*}

Therefore, we summarize
\begin{equation}
\mathcal{P}(U) \le C\varepsilon \delta e^{-C\delta t-C\delta \beta}.    
\end{equation}

\subsection{Estimate of the remaining part $\mathcal{R}_2$}\label{sec:4.6}

\ 

In this subsection, we finish the proof of Lemma \ref{lemma5.1} with previous results.

We first use Young's inequality to get 
\beq\label{2young}
\dot{X}\sum_{i=3}^5Y_{i}\le \frac{\delta}{4M}|\dot{X}|^2 +\frac{C}{\delta}\sum_{i=3}^5|Y_{i}|^2.
\eeq

Substituting \eqref{est main lemma} and \eqref{2young} into \eqref{est}, we have 
\begin{align}
	\begin{aligned}\label{est-2}
	&\frac{d}{dt}\int_{\Omega}a\rho\eta(U|\tilde{U})\,dx \\
	\leq&-C_1\mathcal{G}^S + C\mathcal{K}\\
	&-\frac{\delta}{4M}|\dot{X}|^2 + \frac{C}{\delta}\sum_{i=3}^5|Y_{i}|^2 +\sum_{i=3}^8\mathcal{B}_i -\mathcal{G}_1-\mathcal{G}_3-\frac{1}{4}\mathcal{D}+\mathcal{P},
	\end{aligned}
\end{align}
where
\begin{align*}
&\mathcal{G}^S= \int_{\Omega} |\tilde{v}_{x_1}| |(p(v)-p(\tilde{v})) |^2 \,dx, \\
&\mathcal{K}=\int_{\Omega} |a_{x_1}| |p(v)-p(\tilde{v})|^3 \,dx. \\ 
\end{align*}

\ 

To estimate the remaining terms, we utilize the good terms $\mathcal{G}_1$, $\mathcal{G}^S$, $\mathcal{G}_3$, and $\mathcal{D}$.

The estimates of the remaining terms are conventional and less significant, except for $\mathcal{B}_8$. The detailed proof can be found in \cite{lee4923620long}, so we omit the proof here and only give the resulting estimates.

\begin{align*}
\mathcal{K}&\le C\varepsilon(\mathcal{D}+\mathcal{G}^S), \\
\frac{C}{\delta}\sum\limits_{i=3}^5|Y_i|^2 &\le \frac{C_1\mathcal{G}^S+\mathcal{G}_1+\mathcal{G}_3}{10},\\
\mathcal{B}_3+\mathcal{B}_4+\mathcal{B}_5+\mathcal{B}_6+\mathcal{B}_7&\le\frac{C_1\mathcal{G}^S+\mathcal{G}_1+\mathcal{G}_3+\mathcal{D}}{15}.
\end{align*}

For now, we concentrate only on $\mathcal{B}_8$. To estimate it, first note that
\[R=\frac{2\mu+\lambda}{v}(\nabla_x \mathbf{u} \nabla_x v- \operatorname{div}_x\mathbf{u} \nabla_x v)-\mu \nabla_x \times \left(\nabla_x \times \mathbf{u}\right).\]

Substituting this into $\mathcal{B}_8$, we have
\begin{align*}
\mathcal{B}_8=&(2\mu+\lambda)\int_{\Omega}a(\mathbf{h}-\tilde{\mathbf{h}})\cdot \frac{(\nabla_x \mathbf{u}  \nabla_x v - \operatorname{div}_x\mathbf{u} \nabla_x v)}{v} \, dx\\
&-\mu \int_{\Omega}a(\mathbf{h}-\tilde{\mathbf{h}})\cdot \nabla_x \times  \left(\nabla_x \times \mathbf{u}\right) \, dx\\
=:&\mathcal{B}_{8,1}+\mathcal{B}_{8,2}.
\end{align*}

We can estimate $\mathcal{B}_{8,1}$ as 
\begin{equation*}
\mathcal{B}_{8,1}\le C(\delta+\varepsilon)\left(\mathcal{G}^S+\mathcal{G}_1+\mathcal{G}_3+\mathcal{D}+||\nabla_x(\mathbf{u}-\tilde{\mathbf{u}})||_{H^1}^2\right).
\end{equation*}

Since the proof is the same as \cite{lee4923620long}, we omit it.

\ 

When we estimate $\mathcal{B}_{8,2}$, we should consider the boundary term different $\mathcal{B}_{8,1}$. Thus, we need to estimate this term more delicately.

From the definition of $\mathbf{h}$, 
\begin{align*}
\mathcal{B}_{8,2}=&-\mu \int_{\Omega}a(\mathbf{u}-\tilde{\mathbf{u}})\cdot \nabla_x \times \left(\nabla_x \times \mathbf{u}\right)\, dx\\
&+\mu(2\mu+\lambda)\int_{\Omega}a\nabla_x(v-\tilde{v})\cdot \nabla_x \times \left(\nabla_x \times \mathbf{u}\right) \, dx\\
=&:\mathcal{B}_{8,2,1}+\mathcal{B}_{8,2,2}.
\end{align*}

Thus, using the integration by parts with Lemma \ref{vectori}$-(1)$ and the fact that $\nabla_x \times \tilde{u}\equiv 0$,
\begin{align*} 
\mathcal{B}_{8,2,1}=&-\mu \int_{\Omega}a(\mathbf{u}-\tilde{\mathbf{u}})\cdot \nabla_x \times \left(\nabla_x \times (\mathbf{u}-\mathbf{\tilde{u}})\right) \, dx \\
=&-\mu \int_{\Omega}\nabla_x\times \left(a(\mathbf{u}-\tilde{\mathbf{u}})\right)\cdot \nabla_x \times \left(\mathbf{u}-\tilde{\mathbf{u}}\right) \, dx\\
&-\mu\int_{\mathbb{T}^2} \left[a(\mathbf{u}-\mathbf{\tilde{u}})\times(\nabla_x\times(\mathbf{u}-\mathbf{\tilde{u}}))\right] \cdot (1, 0, 0) \bigg|_{x_1=0}dx'  \\
=&:\mathcal{B}_{8,2,1,1}+\mathcal{B}_{8,2,1,2}. 
\end{align*}

\begin{align}
\begin{aligned}\label{B8.2.1}
\mathcal{B}_{8,2,1,1}=&-\mu \int_{\Omega}\left[\nabla_xa\times\left(\mathbf{u}-\tilde{\mathbf{u}}\right)+a\nabla_x\times \left(\mathbf{u}-\tilde{\mathbf{u}}\right) \right]\cdot \nabla_x \times \left(\mathbf{u}-\tilde{\mathbf{u}}\right) \, dx \\
=&-\mu\int_{\Omega} a|\nabla_x \times (\mathbf{u}-\tilde{\mathbf{u}})|^2 \, dx\\
& \ \ \ -\mu \int_{\Omega}a_{x_1} \left[u_2(\partial_{x_1}u_2-\partial_{x_2}u_1)-u_3(\partial_{x_3}u_1-\partial_{x_1}u_3)\right]\, dx \\
\le& -\mu\int_{\Omega} a|\nabla_x \times (\mathbf{u}-\tilde{\mathbf{u}})|^2 \, dx \\
& \ \ \ +\mu \delta^{\frac{3}{4}}\int_{\Omega}\left|a_{x_1}\right| (u_2^2+u_3^2)\, dx \\
& \ \ \ +\mu \delta^{\frac{3}{4}}\int_{\Omega}( |\partial_{x_1}u_2-\partial_{x_2}u_1|^2+|\partial_{x_3}u_1-\partial_{x_1}u_3|^2)\, dx.\\
\end{aligned}
\end{align}

To use the $\mathcal{G}_3$, we apply $\mathbf{u}=\mathbf{h}+(2\mu+\lambda)\nabla_xv$ to the second term in \eqref{B8.2.1}. So, we have
\begin{align*}
\mathcal{B}_{8,2,1,1}\le&-\mu\int_{\Omega} a|\nabla_x \times (\mathbf{u}-\tilde{\mathbf{u}})|^2 \, dx \\
&+C\delta^{\frac{3}{4}}\int_{\Omega}\left|a_{x_1}\right| (h_2^2+h_3^2)\, dx+C\delta^{\frac{3}{4}}\int_{\Omega} |\nabla_{x'} v|^2\, dx\\
&+C\delta^{\frac{3}{4}} \int_{\Omega}|\nabla_x \times (\mathbf{u}-\tilde{\mathbf{u}})|^2\, dx \\
\le&-\frac{3\mu}{4}\int_{\Omega} a|\nabla_x \times (\mathbf{u}-\tilde{\mathbf{u}})|^2 \, dx+C\sqrt{\delta}(\mathcal{G}_3+\mathcal{D}).       
\end{align*}

As in $\mathcal{P}_3$, we can easily estimate $\mathcal{B}_{8,2,1,2}$ as
\begin{align*}
|\mathcal{B}_{8,2,1,2}|\le &C\int_{\mathbb{T}^2} |\mathbf{u}-\mathbf{\tilde{u}}||\nabla_x\times(\mathbf{u}-\mathbf{\tilde{u}})| \bigg|_{x_1=0}dx' \\
\le &C \varepsilon \delta e^{-C\delta t-C\delta \beta}.  
\end{align*}

Similarly, we apply integration by parts to $\mathcal{B}_{8,2,2}$ with Lemma \ref{vectori}$-(2)$ as
\begin{align*}
\mathcal{B}_{8,2,2}=&\mu(2\mu+\lambda)\int_{\Omega}\nabla_x\times(a\nabla_x(v-\tilde{v}))\cdot (\nabla_x\times \mathbf{u}) dx\\
&+\mu(2\mu+\lambda)\int_{\mathbb{T}^2}  \left[a\nabla_x(v-\tilde{v})\times(\nabla_x\times\mathbf{u})\right]\cdot (1, 0, 0)\bigg|_{x_1=0} dx' \\
=&\mu(2\mu+\lambda)\int_{\Omega}(\nabla_xa\times\nabla_x(v-\tilde{v}))\cdot (\nabla_x\times (\mathbf{u}-\mathbf{\tilde{u}})) dx \\
&+\mu(2\mu+\lambda)\int_{\mathbb{T}^2}  \left[a\nabla_x(v-\tilde{v})\times(\nabla_x\times(\mathbf{u}-\mathbf{\tilde{u}}))\right]\cdot(1, 0, 0)\bigg|_{x_1=0} dx'\\
=&:\mathcal{B}_{8,2,2,1}+\mathcal{B}_{8,2,2,2}.
\end{align*}

Using Cauchy-Schwartz inequality and Young's inequality, 
\begin{align*}
\mathcal{B}_{8,2,2,1}\le& \int_{\Omega} a_{x_1}|\nabla_{x'}(v-\tilde{v})| |\nabla_x\times(\mathbf{u}-\mathbf{\tilde{u}})| dx\\
\le& \frac{\mu}{4}\int_{\Omega} a|\nabla_x \times (\mathbf{u}-\tilde{\mathbf{u}})|^2 \, dx+C\delta^3 \mathcal{D}.
\end{align*}

To estimate \(\mathcal{B}_{8,2,2,2}\), we first note that the following estimate holds by virtue of the facts that \(\partial_{x_2}(\mathbf{u}-\tilde{\mathbf{u}})\bigg\vert{}_{x_1=0}=\partial_{x_3}(\mathbf{u}-\tilde{\mathbf{u}})\bigg\vert{}_{x_1=0}=0\) and the interpolation inequality.
\begin{align*}
&\left[\nabla_x(v-\tilde{v})\times(\nabla_x\times(\mathbf{u}-\tilde{\mathbf{u}}))\right]\cdot(1, 0, 0) \bigg|_{x_1=0}\\
\le &C|\nabla_{x'}(v-\tilde{v})|\bigg|_{x_1=0}|\partial_{x_1}(\mathbf{u}-\tilde{\mathbf{u}})|\bigg|_{x_1=0} \\
\le &C|\nabla_{x'}(v-\tilde{v})|\bigg|_{x_1=0} \sqrt{\|\partial_{x_1}(\mathbf{u}-\tilde{\mathbf{u}})\|_{L^2(\mathbb{R}_+)}\|\partial_{x_1x_1}(\mathbf{u}-\tilde{\mathbf{u}})\|_{L^2(\mathbb{R}_+)}}\\
\le &C|\nabla_{x'}(v-\tilde{v})|\bigg|_{x_1=0}\|\nabla_x(\mathbf{u}-\tilde{\mathbf{u}})\|_{H^1(\mathbb{R}_+)}
\end{align*}

From the above inequality, we have
\begin{align*}
\mathcal{B}_{8,2,2,2}=&-\mu(2\mu+\lambda)\int_{\mathbb{T}^2}  \left[a\nabla_x(v-\tilde{v})\times(\nabla_x\times(\mathbf{u}-\mathbf{\tilde{u}}))\right]\cdot(1, 0, 0)\bigg|_{x_1=0} dx'\\ 
\le& C_{\nu}\int_{\mathbb{T}^2} |\nabla_{x'}(v-\tilde{v})|^2 \bigg|_{x_1=0} \, dx'+\nu  \|\nabla_x(\mathbf{u}-\tilde{\mathbf{u}})\|_{H^1}^2, 
\end{align*}
for some sufficiently small constant $\nu>0$ and $C_\nu>0$ depending on $\nu$.

Combining the above estimates with the smallness of the parameters, we get
\begin{align*}
\mathcal{B}_8\le& \frac{1}{40}(C_1\mathcal{G}^S+\mathcal{G}_1+\mathcal{G}_3+\mathcal{D})+(\nu+C\delta+C\varepsilon)||\nabla_x(\mathbf{u}-\tilde{\mathbf{u}})||^2_{H^1}\\
&+C\delta e^{-C\delta t-C\delta \beta}+ C_\nu\int_{\mathbb{T}^2} |\nabla_{x'}(v-\tilde{v})|^2 \bigg|_{x_1=0}dx'.
\end{align*}

In conclusion, we summarize that \eqref{est-2} implies
\begin{align}
\begin{aligned} \label{4.36con}
	&\frac{d}{dt}\int_{\Omega}a\rho\eta(U|\tU)\,dx+\frac{\delta}{4M}|\dot{X}|^2+\frac{1}{2}\mathcal{G}_1+\frac{1}{2}\mathcal{G}_3 +\frac{C_1}{2}\mathcal{G}^S+\frac{1}{10}\mathcal{D}+\mathcal{P}\\
	 \le& (\nu+C\delta+C\varepsilon)||\nabla_x(\mathbf{u}-\tilde{\mathbf{u}})||^2_{H^1}+ C\delta e^{-C\delta t-C\delta \beta}+C_\nu\int_{\mathbb{T}^2} |\nabla_{x'}(v-\tilde{v})|^2 \bigg|_{x_1=0}dx'.
\end{aligned}
\end{align}

Therefore, integrating \eqref{4.36con} over $\left[0, t\right]$, we complete the proof of Lemma \ref{lemma5.1}. \qed

\section{Higher Order Estimate}
In this section, we obtain the energy estimates up to the $H^2$ level, whereas the half-space problem requires a more refined analysis because of the presence of the boundary in our setting.
\subsection{Notation}
Throughout Section~6, we use the following notation.

\begin{enumerate}
\item \emph{Perturbation variables: }
We denote the perturbations of $v$ and $\mathbf{u}$ by
$$
\phi := v-\widetilde v,
\qquad
{\psi} := \mathbf{u}-\tilde{\mathbf{u}},
$$
respectively. Also, we use the notation
$$\psi' \in \{\psi_2, \psi_3\} $$
to denote either transverse velocity perturbation.\medskip
\item \emph{Spatial variables: }
Since the estimates in the \(x_2\)- and \(x_3\)-directions are essentially identical, we use
\[
    z\in\{x_2,x_3\}
\]
to denote either transverse variable whenever no distinction is needed.
Moreover, since the transverse domain is \(\mathbb{T}^2\), the Plancherel identity and the elementary inequality
\(2ab\leq a^2+b^2\) yield
\[
    \|\boldsymbol{\psi}_{x_2x_3}\|^2
    \leq
    \|\boldsymbol{\psi}_{x_2x_2}\|^2
    +
    \|\boldsymbol{\psi}_{x_3x_3}\|^2.
\]
\medskip
\item \emph{Constants in Young's inequality: }
Unless otherwise specified, we use \(\chi>0\) to denote a sufficiently small constant appearing in Young's inequality. In particular, for any \(f,g\),
\[
    |fg|
    \leq
    \chi |f|^2+\frac{2}{\chi}|g|^2.
\]
Constants depending on \(\chi\), such as \(2/\chi\), are denoted by \(C_\chi\), or simply by \(C\) whenever their dependence on \(\chi\) is irrelevant to the analysis.
\end{enumerate}
Using these notations, we rewrite the perturbed systems as follows:
\beq
\left\{
\begin{aligned} \label{eq:peq}
&\partial_t \phi
 + \mathbf{u} \cdot \nabla_x \phi
 - \dot{X}\tilde v_{x_1}
 + (vF)\,\tilde v_{x_1}
 = v\operatorname{div}_x \psi,
\\[1mm]
&\partial_t \psi
 + \mathbf{u} \,\nabla_x \psi
 + vp'(v) \nabla_x \phi
 + (vS)\nabla_x \tilde v
 + (vF - \dot X) \,\mathbf{\tilde u}_{x_1} 
 = v\,\big(\mu \Delta_x \psi
    + (\mu + \lambda) \nabla_x \operatorname{div}_x \psi\big).
\end{aligned}
\right.
\eeq
Here, $F$ and $S$ are defined as
\begin{align} \label{eq:FS}
vF = \sigma^*\phi + \psi_1, \quad S = p'(v) - p'(\tilde v).
\end{align}
\subsection{Useful Estimates} We introduce the following two lemmas for computational convenience in the subsequent analysis.
\begin{lemma} \label{L: estiF}
For $F$ and $S$ defined in \eqref{eq:FS}, the following estimates hold:
\begin{itemize}
\setlength{\itemsep}{8pt} 
\item[(1)] $\|\nabla_x^\alpha(vF)\|^2 \le C\big(\|\nabla_x^\alpha \phi\|^2 + \|\nabla_x^\alpha \psi_1\|^2\big)$ for any $\alpha \in \{0,1,2\},$
\item[(2)] $\|(vF)_t\|^2 \le C\big(\|\phi_t\|^2 + \|\psi_{1t}\|^2\big),$
\item[(3)] $\|\nabla_x^\alpha S\|^2 \le C\big(\sum_{i=1}^\alpha \|\nabla_x^i \phi\|^2 + \delta^2 \mathcal{G}^s \big)$ for any $\alpha \in \{1,2\},$
\item[(4)] $\|S_t\|^2 \le C\big( \|\phi_t\|^2 + \delta^2 \mathcal{G}^2 \big).$
\end{itemize}
\end{lemma}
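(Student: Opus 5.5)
The plan is to expand each quantity explicitly and use three facts throughout: the a priori smallness \eqref{perturbation_small}, the boundedness of $v$ and $\tilde v$ away from $0$ and $\infty$, and the shock profile estimates of Lemma \ref{lem:shock-est}.

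\emph{Items (1) and (2).} Since $vF=\sigma^*\phi+\psi_1$ by \eqref{eq:FS}, and $\sigma^*$ is a constant of order one by \eqref{shock_speed_est}, we have
\[
\nabla_x^\alpha(vF)=\sigma^*\nabla_x^\alpha\phi+\nabla_x^\alpha\psi_1,
\qquad
(vF)_t=\sigma^*\phi_t+\psi_{1t}.
\]
Both estimates then follow from the triangle inequality with $C=2\max\{(\sigma^*)^2,1\}$. No nonlinear term appears.

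\emph{Item (3), case $\alpha=1$.} For $S=p'(v)-p'(\tilde v)$, the chain rule gives
\[
\nabla_x S=p''(v)\nabla_x\phi+\big(p''(v)-p''(\tilde v)\big)\nabla_x\tilde v .
\]
The first term is bounded by $C|\nabla_x\phi|$. For the second, the mean value theorem gives $|p''(v)-p''(\tilde v)|\le C|\phi|$, and $|\nabla_x\tilde v|=|\tilde v_{x_1}|\le C\delta^2$. Hence its square is at most
\[
C|\tilde v_{x_1}|^2|\phi|^2\le C\delta^2|\tilde v_{x_1}||\phi|^2 .
\]
By \eqref{est-rel-2}, $|\phi|\sim|p(v)-p(\tilde v)|$, so integrating gives a bound by $\delta^2\mathcal G^S$ in either of its two equivalent forms.

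\emph{Item (3), case $\alpha=2$.} Differentiate once more:
\[
\nabla_x^2S=p''(v)\nabla_x^2\phi+p'''(v)\,\nabla_x\phi\otimes\nabla_x v+\nabla_x\Big[\big(p''(v)-p''(\tilde v)\big)\nabla_x\tilde v\Big].
\]
The first term is bounded by $C\|\nabla_x^2\phi\|$. The last bracket produces three kinds of terms:
\begin{itemize}
\item $|\nabla_x\phi||\tilde v_{x_1}|$, bounded by $C\delta^2\|\nabla_x\phi\|$;
\item $|\phi||\tilde v_{x_1}|^2$, bounded by $C\delta^4\mathcal G^S$;
\item $|\phi||\tilde v_{x_1x_1}|$, bounded by $C\delta|\phi||\tilde v_{x_1}|$ using $|\tilde v''|\le C\delta|\tilde v'|$, hence again controlled by $\delta^2\mathcal G^S$.
\end{itemize}

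\emph{Main obstacle.} The delicate term is the quadratic one, $p'''(v)\nabla_x\phi\otimes\nabla_x v$. Write $\nabla_x v=\nabla_x\phi+\nabla_x\tilde v$. The $\nabla_x\tilde v$ part is harmless. The $|\nabla_x\phi|^2$ part I plan to control in $L^2$ by Lemma \ref{GNIsevera}, namely \eqref{GNIAPP4}:
\[
\big\||\nabla_x\phi|^2\big\|\le C\|\nabla_x\phi\|_{H^1}^2\le C\varepsilon\big(\|\nabla_x\phi\|+\|\nabla_x^2\phi\|\big),
\]
where the second inequality uses \eqref{perturbation_small}. This is absorbed into $C(\|\nabla_x\phi\|^2+\|\nabla_x^2\phi\|^2)$.

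\emph{Item (4).} Exactly as in the case $\alpha=1$:
\[
S_t=p''(v)\phi_t+\big(p''(v)-p''(\tilde v)\big)\tilde v_t .
\]
Here $\tilde v_t=-(\sigma+\dot X)\tilde v_{x_1}$. Since $|\dot X|\le C\varepsilon$ by \eqref{bddx12}, we get $|\tilde v_t|\le C|\tilde v_{x_1}|$. The same argument then gives
\[
\|S_t\|^2\le C\big(\|\phi_t\|^2+\delta^2\mathcal G^S\big),
\]
where the $\mathcal G^2$ in the statement is read as $\mathcal G^S$.
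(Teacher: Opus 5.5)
Your proposal is correct and follows the same route as the paper: items (1)--(2) are immediate from $vF=\sigma^*\phi+\psi_1$, and (3)--(4) come from expanding $S=p'(v)-p'(\tilde v)$ by the chain rule and squaring. The key inputs are $|p^{(k)}(v)-p^{(k)}(\tilde v)|\le C|\phi|$, the profile bounds of Lemma \ref{lem:shock-est}, and $\tilde v_t=-(\sigma+\dot X)\tilde v_{x_1}$ for (4). Your expansion of $\nabla_x^2 S$ is actually more careful than the paper's displayed formula for $\partial_\beta^2 S$, which drops the factors $\partial_\beta v$ and $\partial_\beta\tilde v$ and with them the quadratic term $p'''(v)|\nabla_x\phi|^2$; your bound of that term via \eqref{GNIAPP4} and the smallness assumption \eqref{perturbation_small} is exactly what is needed to justify the stated estimate.
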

\begin{proof}
Using the definition of $F$ in \eqref{eq:FS}, estimates (1) and (2) follow trivially.

For the case $|\alpha| = 2$ in (3), for any $\beta \in \{x_1, x_2, x_3\}$, we obtain the following:
$$\pa_\beta^2 S = p'''(v) \pa_\beta\phi + (p'''(v) - p'''(\tilde{v}))\pa_\beta\tilde{v} + p''(v) \pa_\beta^2 \phi + (p''(v) - p''(\tilde{v})) \pa_\beta^2 \tilde{v}.$$
Squaring both sides and integrating the spatial variables yields the desired inequality. The remaining cases for (3) and (4) can be easily obtained in a similar manner.
\end{proof}
\begin{lemma}\label{L:psig}
There exists a constant $C > 0$ independent of \(\varepsilon\), \(\delta\), \(\chi\), and \(T\), such that
for all \(t \in [0,T]\),
$$ \int_{\Omega} |\tilde v_{x_1}||\psi|^2 \, dx \le C\big(\mathcal{G}^s + \delta \mathcal{G}_1 + \delta \mathcal{G}_3 + \delta^2 \mathcal{D}\big).$$
\end{lemma}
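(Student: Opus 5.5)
The plan is to decompose $\psi$ through the effective velocity, so that each piece of $\int_\Omega|\tilde v_{x_1}||\psi|^2\,dx$ is matched with one of the good terms $\mathcal{G}^s$, $\mathcal{G}_1$, $\mathcal{G}_3$, $\mathcal{D}$. Since $\mathbf{h}=\mathbf{u}-(2\mu+\lambda)\nabla_x v$ and $\tilde{\mathbf{h}}=\tilde{\mathbf{u}}-(2\mu+\lambda)\nabla_x\tilde v$, we have $\psi=(\mathbf{h}-\tilde{\mathbf{h}})+(2\mu+\lambda)\nabla_x\phi$. For the first component I will write
\[
\psi_1=\Big(h_1-\tilde h-\frac{p(v)-p(\tilde v)}{\sigma^*}\Big)+\frac{p(v)-p(\tilde v)}{\sigma^*}+(2\mu+\lambda)\partial_{x_1}\phi .
\]
For the transverse components, since $\tilde{\mathbf{h}}$ has vanishing transverse parts, $\psi'=h_z+(2\mu+\lambda)\partial_z\phi$. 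Squaring and using $(a+b+c)^2\le 3(a^2+b^2+c^2)$ then reduces the lemma to three model integrals against the weight $|\tilde v_{x_1}|$.

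\textbf{The $p(v)-p(\tilde v)$ piece.} The middle piece of $\psi_1$ gives exactly $C\mathcal{G}^s$, because $\sigma^*\sim\sigma_+$ is of order one.

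\textbf{The gradient pieces.} For the $\nabla_x\phi$ pieces I use $\|\tilde v_{x_1}\|_{L^\infty}\le C\delta^2$ from Lemma \ref{lem:shock-est}. I then convert $\nabla_x\phi$ into $\nabla_x(p(v)-p(\tilde v))$ via
\[
\nabla_x(p(v)-p(\tilde v))=p'(v)\nabla_x\phi+(p'(v)-p'(\tilde v))\nabla_x\tilde v ,
\]
where $|p'(v)|$ is bounded below since $v$ stays near $v_+$. This gives
\[
|\nabla_x\phi|^2\le C|\nabla_x(p(v)-p(\tilde v))|^2+C|\tilde v_{x_1}|^2|p(v)-p(\tilde v)|^2 .
\]
Hence these pieces are bounded by $C\delta^2\mathcal{D}+C\delta^4\mathcal{G}^s$, which is admissible.

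\textbf{The pieces $h_1-\tilde h-(p(v)-p(\tilde v))/\sigma^*$ and $h_2,h_3$.} Here I compare the weights pointwise. By \eqref{inta}, $|a_{x_1}|=|p'(\tilde v)||\tilde v_{x_1}|/\sqrt\delta$, so $|\tilde v_{x_1}|=\sqrt\delta\,|a_{x_1}|/|p'(\tilde v)|\le C\sqrt\delta\,|a_{x_1}|$. This bounds these pieces by $C\sqrt\delta(\mathcal{G}_1+\mathcal{G}_3)$.

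\textbf{Main obstacle.} The statement asks for the factor $\delta$, not $\sqrt\delta$, in front of $\mathcal{G}_1$ and $\mathcal{G}_3$. Under the normalization \eqref{weightdef}, the ratio $|\tilde v_{x_1}|/|a_{x_1}|$ equals $\sqrt\delta/|p'(\tilde v)|$ pointwise, so this comparison alone cannot produce the extra $\sqrt\delta$. I will first try to extract the missing factor from the $\mathcal{G}_1$, $\mathcal{G}_3$ pieces directly. My first attempt is to integrate by parts in $x_1$ against $\tilde v_{x_1}$, using $|\tilde v_{x_1x_1}|\le C\delta|\tilde v_{x_1}|$ from Lemma \ref{lem:shock-est} together with the dissipation $\mathcal{D}$, so as to trade one factor of the weight for a power of $\delta$.

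If that fails, the pointwise comparison only yields $C\sqrt\delta$ in front of $\mathcal{G}_1$ and $\mathcal{G}_3$. I would then check whether, wherever this lemma is applied in Section 6, the term $\int_\Omega|\tilde v_{x_1}||\psi|^2\,dx$ appears multiplied by a further positive power of $\delta$ (for instance through $\|\tilde v_{x_1}\|_{L^\infty}$ or $\|\tilde{\mathbf u}_{x_1}\|_{L^\infty}$). In that case the $\sqrt\delta$ version would be absorbable by the left-hand side of Lemma \ref{lemma5.1} for $\delta$ small. This remains to be checked rather than assumed.
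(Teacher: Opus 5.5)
Your decomposition and the three estimates are correct. The identity $\psi=(\mathbf h-\tilde{\mathbf h})+(2\mu+\lambda)\nabla_x\phi$ and the following three bounds together prove the inequality displayed below:
\begin{itemize}
\item the $(p(v)-p(\tilde v))/\sigma^*$ piece is bounded by $C\mathcal{G}^s$;
\item the gradient pieces are bounded by $C\delta^2\mathcal{D}+C\delta^4\mathcal{G}^s$;
\item the weights compare as $|\tilde v_{x_1}|\le C\sqrt\delta\,|a_{x_1}|$.
\end{itemize}
\[
\int_\Omega|\tilde v_{x_1}||\psi|^2\,dx\le C\big(\mathcal{G}^s+\sqrt\delta\,\mathcal{G}_1+\sqrt\delta\,\mathcal{G}_3+\delta^2\mathcal{D}\big).
\]
This is the standard argument; the paper gives no proof beyond citing Lemma 5.1 of \cite{lee4923620long}. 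One notational point: write the transverse components as $\psi_j=h_j+(2\mu+\lambda)\partial_{x_j}\phi$, $j=2,3$, because in this paper a subscript $z$ denotes a derivative.

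The obstacle you flag is not a gap in your argument but an error in the statement. Under the normalization \eqref{weightdef}, no argument can produce the missing $\sqrt\delta$, so drop the integration-by-parts plan. To see this, take an admissible state (for instance initial data, where $X(0)=0$) with $v=\tilde v$ and $\psi=(\psi_1,0,0)$, $\psi_1\not\equiv 0$. Then:
\begin{itemize}
\item $\phi=0$, so $\mathcal{G}^s=\mathcal{D}=0$;
\item $h_2=h_3=0$, so $\mathcal{G}_3=0$;
\item $h_1-\tilde h-\frac{p(v)-p(\tilde v)}{\sigma^*}=\psi_1$.
\end{itemize}
Hence
\[
\int_\Omega|\tilde v_{x_1}||\psi|^2\,dx=\sqrt\delta\int_\Omega\frac{|a_{x_1}|}{|p'(\tilde v)|}\,\psi_1^2\,dx\ \ge\ c\sqrt\delta\,\mathcal{G}_1 ,
\]
while the claimed right-hand side is $C\delta\,\mathcal{G}_1$. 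This fails as soon as $\delta<(c/C)^2$. Both sides are quadratic in $\psi_1$, so the smallness assumption does not help. Your pointwise comparison is therefore sharp, and the correct lemma carries $\sqrt\delta$ in front of $\mathcal{G}_1$ and $\mathcal{G}_3$.

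Your fallback is the right resolution, and it does close. The lemma is used in Section 6 in the terms $\dot X\int Y$ and $J_{para}$ in Lemma \ref{L:psiH1}, in $J_5$ in Lemma \ref{L:psiH2}, and in the bounds on $R,T$ in Lemma \ref{L:22}. In each place the weaker version only produces contributions of size at most $C\sqrt\delta\,(\mathcal{G}_1+\mathcal{G}_3)$. For $\delta$ small these are absorbed by the good terms $\int_0^t(\mathcal{G}_1+\mathcal{G}_3)\,d\tau$ on the left of Lemma \ref{lemma5.1}.
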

\begin{proof}
See \cite{lee4923620long}, Lemma 5.1.
\end{proof}
\subsection{Vanishing the $O(1)$-scale bad term}
As seen in \eqref{4.36con}, the zeroth-order estimate leaves the boundary term
$$
    |\phi_z\big|_{x_1=0}|^2
$$
uncontrolled. Since this term cannot be made arbitrarily small, we first estimate it independently of the zeroth-order estimate to avoid a circular argument.

\begin{lemma}
Under the \textit{a priori} assumption, there exists a constant \label{L:bdesti}
\(C>0\) independent of \(\varepsilon\), \(\delta\), \(\chi\) and \(T\), such that
for all \(t \in [0,T]\), it holds
\begin{equation}
\begin{aligned}\nonumber
&\sup_{0 \le t \le T} \|\partial_{z}(\phi, \psi)(t) \|^2 
 + \int_0^t \big( \|\phi_z \big|_{x_1=0}\|_{L^2(\mathbb{T}^2)}^2 + \|\nabla_x \psi_{z}\|^2 \big) \, ds \\[2mm]
&\le C\|\partial_{z} ( \phi_0, \psi_0 ) \|^2 + C(\varepsilon + \delta^2) \int_0^t \big( \|\nabla_x \psi\|_{H^2}^2 + \|\nabla_x \phi\|_{H^1}^2 + \mathcal{G}^s(s) \big) \, ds
\end{aligned}
\end{equation}
\end{lemma}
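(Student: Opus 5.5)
We differentiate the perturbation system \eqref{eq:peq} with respect to a tangential variable $z\in\{x_2,x_3\}$ and perform a weighted $L^2$ energy estimate for $(\phi_z,\psi_z)$. Since $\tilde v,\tilde{\mathbf u},\dot X$ do not depend on $z$, the differentiated system reads
\[
\partial_t\phi_z+\mathbf u\cdot\nabla_x\phi_z-v\operatorname{div}_x\psi_z=\mathcal{N}_1,\qquad
\partial_t\psi_z+\mathbf u\nabla_x\psi_z+vp'(v)\nabla_x\phi_z-v\big(\mu\Delta_x\psi_z+(\mu+\lambda)\nabla_x\operatorname{div}_x\psi_z\big)=\mathcal{N}_2,
\]
where $\mathcal N_1,\mathcal N_2$ collect commutator terms such as $\mathbf u_z\cdot\nabla_x\phi$, $v_z\operatorname{div}_x\psi$, $(vF)_z\tilde v_{x_1}$, $(vS)_z\nabla_x\tilde v$ and $v_z(\mu\Delta_x\psi+\dots)$. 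We multiply the first equation by $-p'(v)\phi_z/v$ and the second by $\psi_z/v$, which gives the symmetrizing weights, and integrate over $\Omega$. The pressure cross terms $\int p'(v)\nabla_x\phi_z\cdot\psi_z$ and $-\int p'(v)\phi_z\operatorname{div}_x\psi_z$ combine after integration by parts into lower-order terms containing $\nabla_x p'(v)$. They also produce a boundary term $\int_{\mathbb T^2}p'(v)\phi_z\psi_{1z}|_{x_1=0}$, which vanishes because $\psi_z|_{x_1=0}=0$; this follows since $\mathbf u|_{x_1=0}=\mathbf u_-$ is constant and $\tilde{\mathbf u}$ is independent of $z$. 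For the same reason, all viscous boundary terms $\int \psi_z\cdot\partial_{x_1}\psi_z|_{x_1=0}$ vanish, and the viscous part yields the dissipation $\mu\|\nabla_x\psi_z\|^2+(\mu+\lambda)\|\operatorname{div}_x\psi_z\|^2$ up to terms involving $\nabla_x v$ that are small by \eqref{perturbation_small} and Lemma \ref{lem:shock-est}.

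The key step is the transport term in the $\phi_z$ equation:
\[
\int_\Omega \frac{-p'(v)}{v}\,\phi_z\,\mathbf u\cdot\nabla_x\phi_z\,dx
=-\frac12\int_\Omega \operatorname{div}_x\Big(\frac{-p'(v)}{v}\mathbf u\Big)|\phi_z|^2\,dx-\frac12\int_{\mathbb T^2}\frac{-p'(v)}{v}u_-|\phi_z|^2\Big|_{x_1=0}dx'.
\]
The boundary integral comes with the sign of $-u_-$, and $u_-<0$ by the outflow assumption. Moving it to the left-hand side therefore gives the good term $c\,\|\phi_z|_{x_1=0}\|^2_{L^2(\mathbb T^2)}$, which is exactly the trace needed in Lemma \ref{lemma5.1}. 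The analogous term for $\psi_z$ vanishes on the boundary. The interior divergence term is bounded by $C(\|\nabla_x\phi\|_{L^\infty}+\|\nabla_x\psi\|_{L^\infty}+\delta^2)\|\phi_z\|^2$, and is then absorbed using the Sobolev bound of Lemma \ref{infty interpolation inequality lemma}.

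The remaining commutator and profile terms are handled by Hölder's inequality together with Lemma \ref{GNIsevera}, Lemma \ref{L: estiF} and Lemma \ref{L:psig}.
\begin{itemize}
\item Terms multiplied by $\tilde v_{x_1}$ or $\tilde u_{x_1}$, for example $(vF)_z\tilde v_{x_1}\phi_z$, $\dot X$-free pieces and $(vS)_z\tilde v_{x_1}$, carry a factor $\delta^2$ and are bounded by $C\delta^2(\|\nabla_x\phi\|^2+\|\nabla_x\psi\|^2+\mathcal G^s)$.
\item Nonlinear terms such as $\psi_z\cdot\nabla_x\phi\,\phi_z$ or $v_z\Delta_x\psi\cdot\psi_z$ are bounded by $C\varepsilon(\|\nabla_x\phi\|_{H^1}^2+\|\nabla_x\psi\|_{H^2}^2)$.
\item Terms in which $\nabla_x\phi_z$ would appear are avoided: we only integrate by parts in the directions where the boundary term is zero, or where it has the good sign as above.
\end{itemize}

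The main obstacle is the term $-v\,\phi_z\operatorname{div}_x\psi_z$ paired with the weight. Its symmetrization must leave no boundary trace of $\phi_z\,\partial_{x_1}\psi_1$. We handle this by writing the coupling as $\nabla_x\phi_z\cdot\psi_z$ and integrating by parts, so that the trace contains $\psi_{1z}|_{x_1=0}=0$. We then integrate in time and absorb the small terms, which yields the stated inequality.
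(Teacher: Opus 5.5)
Your argument follows the paper's proof. You differentiate \eqref{eq:peq} in $z$, test with symmetrizing multipliers, and use $\psi_z|_{x_1=0}=0$ to kill the viscous and pressure traces. You then use $u_-<0$ to turn the transport flux into the good trace $\|\phi_z|_{x_1=0}\|^2_{L^2(\mathbb{T}^2)}$. The extra factor $1/v$ in your multipliers is harmless. The ``main obstacle'' you describe is not really one, since the two pressure cross terms simply add up to $p'(v)\operatorname{div}_x(\phi_z\psi_z)$.

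One step, however, fails as written, and a related term is missing.

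\textbf{The $L^\infty$ bound.} You bound the interior term $\frac12\int_\Omega \operatorname{div}_x\bigl(\frac{-p'(v)}{v}\mathbf{u}\bigr)|\phi_z|^2\,dx$ by $C(\|\nabla_x\phi\|_{L^\infty}+\|\nabla_x\psi\|_{L^\infty}+\delta^2)\|\phi_z\|^2$ and then invoke Lemma \ref{infty interpolation inequality lemma}. For $g=\nabla_x\phi$, that lemma requires $\nabla_x^3\phi\in L^2$. This is not available: the a priori class gives only $\phi\in H^2$ and $\nabla_x\phi\in L^2(0,T;H^1)$, and the lemma's right-hand side contains no such term. The $v$-equation is of transport type and never produces third derivatives of $\phi$.

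\textbf{The time derivative of the weight.} You do not account for $\frac12\int_\Omega\partial_t\bigl(\frac{-p'(v)}{v}\bigr)|\phi_z|^2\,dx$, which comes from the time-dependent weight and contains $v_t$.

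\textbf{The fix.} The paper handles both issues at once. It combines the weight's time derivative with the $\mathbf{u}\cdot\nabla_x v$ part of the divergence and uses mass conservation $v_t+\mathbf{u}\cdot\nabla_x v=v\operatorname{div}_x\mathbf{u}$. After this, only $\operatorname{div}_x\mathbf{u}=\operatorname{div}_x\psi+\tilde u_{x_1}$ survives. Its bound $\|\operatorname{div}_x\mathbf{u}\|_{L^\infty}\le C\|\operatorname{div}_x\psi\|_{H^2}+C\delta^2$ is admissible, as in \eqref{eq:div}. With your weights this is even cleaner, because $\rho(\partial_t+\mathbf{u}\cdot\nabla_x)f=\partial_t(\rho f)+\operatorname{div}_x(\rho\mathbf{u}f)$.

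Alternatively, a leftover $\nabla_x\phi$ factor can be handled by H\"older and $H^1\subset L^4$: $\|\nabla_x\phi\|\,\|\phi_z\|_{L^4}^2\le C\varepsilon\|\nabla_x\phi\|_{H^1}^2$. With either repair your sketch closes exactly as the paper's proof does.
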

\begin{proof}
Taking $\partial_{z}$ to \eqref{eq:peq}, it can be written as follows:
\begin{equation}
\left\{
\begin{aligned}
&\partial_t \phi_{z}
 + \mathbf{u} \cdot \nabla_x \phi_{z}
 = \, v \operatorname{div}_x \psi_{z} + R,
\\[2mm]\label{eq:L3}
&\partial_t \psi_{z}
 + \mathbf{u} \,\nabla_x \psi_{z}
 + vp'(v) \nabla_x \phi_{z}
 = v\big(\mu \Delta_x \psi_{z}
    + (\mu + \lambda) \nabla_x \operatorname{div}_x \psi_{z} \big)
 + T,
\end{aligned}
\right.
\end{equation}
where $R$ and $T$ in this lemma are defined as
\begin{equation}
\begin{aligned}\nonumber
R &= v_{z}\operatorname{div}_x \psi - \mathbf{u}_{z} \cdot \nabla_x \phi - (vF)_{z}(\tilde v)_{x_1}, \\[2mm]
T &= - \mathbf{u}_{z} \nabla_x \psi - \big(vp'(v)\big)_{z} \nabla_x \phi - (vS)_{z} \nabla_x \tilde v - (vF)_{z} \tilde u_{x_1} + v_{z}\big(\mu \Delta_x \psi
    + (\mu + \lambda) \nabla_x \operatorname{div}_x \psi \big).
\end{aligned}
\end{equation}
Multiplying $\eqref{eq:L3}_1$ by $-p'(v)\phi_z$ and $\eqref{eq:L3}_2$ by $\psi_z$, and adding the resulting equations, we obtain the following.
\beq
\begin{aligned}\nonumber
&-\frac{p'(v)}{2}\partial_t |\phi_z|^2 + \frac12\partial_t |\psi_z|^2 + \mathbf{u} \big(-p'(v) \phi_z \nabla_x \phi_z + \psi_z \nabla_x \psi_z \big) \\
&\quad+vp'(v)\big(\phi_z \operatorname{div}_x \psi_z + \nabla_x \phi_z \psi_z \big)  - v\big(\mu \Delta_x \psi_{z}
    + (\mu + \lambda) \nabla_x \operatorname{div}_x \psi_{z} \big) \psi_z = -p'(v)R \phi_z + T\psi_z
\end{aligned}
\eeq
Note that the Lemma~\ref{vectori} yields the following relationships.
\beq
\begin{aligned}\nonumber
-\frac{p'(v)}{2}\partial_t |\phi_z|^2 + \frac12\partial_t |\psi_z|^2 &=\partial_t \big(-\frac{p'(v)}{2}|\phi_z|^2 + |\psi_z|^2 \big) + \frac{p''(v) v_t}{2} |\phi_z|^2 \\[2mm]
\mathbf{u}\cdot \big(-p'(v) \phi_z \nabla_x \phi_z + \psi_z \nabla_x \psi_z \big) & = \operatorname{div}_x\big(-\frac{p'(v)\mathbf{u}}{2}|\phi_z|^2 + \frac12 \mathbf{u}|\psi_z|^2 \big) \\
&\quad- \operatorname{div}_x \mathbf{u} \big(-\frac{p'(v)}{2}|\phi_z|^2 + \frac12|\psi_z|^2 \big) + \frac12 p''(v) \nabla_x v \cdot\mathbf{u} |\phi_z|^2 \\[4mm]
vp'(v) \big(\phi_z \operatorname{div}_x \psi_z + \nabla_x \phi_z \psi_z \big)  &= \operatorname{div}_x \big(vp'(v)\phi_z \psi_z \big) - \phi_z \nabla_x\big(vp'(v)\big)\psi_z\\[2mm]
v\big(\mu \Delta_x \psi_{z}
    + (\mu + \lambda) \nabla_x \operatorname{div}_x \psi_{z} \big)\cdot \psi_z &= \operatorname{div}_x \big(\mu v\, \psi_z \nabla_x \psi_z+(\mu + \lambda)v\, \operatorname{div}_x \psi_z \psi_z\big) \\[1mm]
&\quad-v\big(\mu |\nabla_x \psi_z|^2 + (\mu + \lambda)|\operatorname{div}_x \psi_z|^2 \big) \\[1mm]
&\quad -\nabla_x v \cdot\big( \mu \psi_z \nabla_x \psi_z+(\mu + \lambda) \operatorname{div}_x \psi_z \psi_z\big)\\
\end{aligned}
\eeq
Furthermore, using the following fact from mass conservation,
$$\frac12 p''(v)|\phi_z|^2 \underbrace{(v_t + \mathbf{u} \cdot\nabla_x v)}_{=\,v\operatorname{div}_x \mathbf u} = \frac12 vp''(v)\operatorname{div}_x \mathbf{u}|\phi_z|^2 $$
Consequently, we obtain the following equation.
\beq
\begin{aligned}\label{eq:ref}
&\frac{d}{dt} \int_{\Omega} \frac12\big(-p'(v) |\phi_z|^2 + |\psi_z|^2 \big)\,dx \\
&\qquad -u_- \int_{\mathbb{T}^2} \big(-\frac{p'(v)}{2}|\phi_z|^2 + \frac12 |\psi_z|^2 \big)\Big|_{x_1 = 0} \, dx'
+ \int_{\Omega} v\big(\mu |\nabla_x \psi_z|^2 + (\mu+\lambda)|\operatorname{div}_x \psi_z|^2 \big) \, dx \\[2mm]
&=\int_{\Omega} \operatorname{div}_x \mathbf{u} \big(-\frac12vp''(v)|\phi_z|^2 + \frac12 p'(v) |\phi_z|^2 - \frac12 |\psi_z|^2 \big) \, dx + \int_{\Omega} -p'(v) R \phi_z \, dx + \int_{\Omega} T \psi_z \, dx \\[2mm]
&\qquad + \int_{\Omega} \phi_z \nabla_x \big(vp'(v)\big) \cdot \psi_z + \nabla_x v \cdot \big((\mu + \lambda) \operatorname{div}_x \psi_z \psi_z + \mu \psi_z \nabla_x \psi_z \big) \, dx \\[2mm]
&\qquad - \int_{\mathbb{T}^2} v\big((\mu + \lambda) \operatorname{div}_x \psi_z \psi_z + \mu\psi_z \nabla_x \psi_z - p'(v) \phi_z \psi_z\big) \Big|_{x_1 = 0} \, dx' \quad := \sum_{i=1}^5 J_i.
\end{aligned}
\eeq
Since the \textit{a priori} assumption does not control $\operatorname{div}_x \mathbf{u}$, we decompose this term as follows.
\[
\operatorname{div}_x \mathbf{u} = \operatorname{div}_x \psi + \operatorname{div}_x \tilde {\mathbf{u}} = \operatorname{div}_x \psi + \tilde u_{x_1},
\]
In light of the above identity, Lemmas~\ref{lem:shock-est} and \ref{GNIsevera}, and the following \textit{a priori} assumptions
$$\|\phi_z\|,\,\|\psi_z\|,\, \|\operatorname{div}_x\psi\|_{H^1} \le C\e $$
the following estimates hold:
\beq
\begin{aligned} \label{eq:div}
|J_1| &\le C\int_{\Omega} |\operatorname{div}_x \mathbf{u}| \big(|\phi_z|^2 + |\psi_z|^2 \big) \, dx \le C\big(\|\operatorname{div}_x \psi\|_{H^2} + \|\tilde u_{x_1}\|_{L^\infty}\big)\big(\|\phi_z\|^2 + \|\psi_z\|^2 \big)\\[2mm]
&\le C\big(\|\nabla_x^2 \operatorname{div}_x \psi\| + \|\operatorname{div}_x \psi\|_{H^1} + \delta^2\big)\big(\|\phi_z\|^2 + \|\psi_z\|^2 \big) \\[2mm]
&\le C\eps(\|\phi_z\| + \|\psi_z\| \big)\big\|\nabla_x^2 \operatorname{div}_x \psi\big\| + C(\varepsilon + \delta^2)\big(\|\phi_z\|^2 + \|\psi_z\|^2 \big) \\[2mm]
&\le C(\varepsilon + \delta^2)\big(\|\phi_z\|^2 + \|\psi_z\|^2 + \|\nabla_x^2 \operatorname{div}_x \psi\|^2\big).
\end{aligned}
\eeq
Since the planar shock profile depends only on \(x_1\), we have $v_z=\phi_z$ and
$    \mathbf{u}_z=\psi_z
$. Consequently, $R$ and $T$ can be expressed as
\begin{equation}
\begin{aligned}\nonumber
R &= \phi_{z}\operatorname{div}_x \psi - \psi_{z} \cdot \nabla_x \phi - (vF)_{z}(\tilde v)_{x_1}, \\[2mm]
T &= - \psi_{z} \nabla_x \psi - \big(vp'(v)\big)_{z} \nabla_x \phi - (vS)_{z} \nabla_x \tilde v - (vF)_{z} \tilde u_{x_1} + \phi_{z}\big(\mu \Delta_x \psi
    + (\mu + \lambda) \nabla_x \operatorname{div}_x \psi \big). \\
\end{aligned}
\end{equation}
Moreover, using the following identity derived by the barotropic condition,
\begin{align}\label{eq:baro1}
    \big(vp'(v)\big)_z = \big(p'(v) + vp''(v)\big)\phi_z = \gamma^2 v^{-\gamma-1} \phi_z
\end{align}
$T$ is further simplified as follows:
\begin{align*} \nonumber
T &= - \psi_{z} \nabla_x \psi - -\gamma^2v^{-\gamma-1}\phi_{z} \nabla_x \phi - S \phi_z \nabla_x \tilde v \\
&\quad - vS_{z} \nabla_x \tilde v - (vF)_{z} \tilde u_{x_1} + \phi_{z}\big(\mu \Delta_x \psi
    + (\mu + \lambda) \nabla_x \operatorname{div}_x \psi \big).
\end{align*}
Therefore, applying Lemmas \ref{lem:shock-est}, \ref{GNIsevera}, and \ref{L: estiF}; and using the \textit{a priori} assumptions
$$\|\phi_z\|_{H^1},\,\|\nabla_x \phi\|_{H^1},\,\|\nabla_x \psi\|_{H^1} \le C\e, $$
we obtain the following estimates for $R$ and $T$:
\beq
\begin{aligned} \label{eq:estiRT}
\|R\| & \le C\big(\|\phi_z\|_{H^1}\|\operatorname{div}_x \psi\|_{H^1} + \|\psi_z\|_{H^1}\|\nabla_x \phi\|_{H^1} + \|\tilde v_{x_1}\|_{L^\infty}\|(vF)_x\|\big)\\
&\le C(\varepsilon + \delta^2) \big(\|\operatorname{div}_x \psi\|_{H^1} + \|\psi_z\|_{H^1} + \|(vF)_z\|^2 \big), \\[3mm]
\|T\| &\le C\Big(\|\psi_z\|_{H^1} \|\nabla_x \psi\|_{H^1} + \|\phi_z\|_{H^1} \|\nabla_x \phi\|_{H^1} + \|\nabla_x \tilde v\|_{L^\infty} \|S\|_{H^2} \|\phi_z\| \\
&\qquad + \|\nabla_x \tilde v\|_{L^\infty} \|S_z\| + \|\mathbf{\tilde u}_{x_1}\|_{L^\infty} \|(vF)_z\| + \|\phi_z\|_{H^1} \|\Delta_x \psi\|_{H^1} + \|\phi_z\|_{H^1} \|\nabla_x \operatorname{div}_x \psi\|_{H^1} \Big) \\[1mm]
&\le C(\delta^2 + \varepsilon) \big(\|\psi_z\|_{H^1} + \|\phi_z\|_{H^1} + \|\nabla_x  \phi \| + \delta^2 \mathcal{G}^s + \|\nabla_x^2 \psi\|_{H^1}\big).
\end{aligned}
\eeq
Using Young's inequality with \eqref{eq:estiRT} and Lemma \ref{L: estiF}, we obtain the following:
\beq
\begin{aligned} \nonumber
|J_2| &\le C \|R\| \|\phi_z\| \le C(\varepsilon + \delta^2) \big(\|\phi_z\|^2 + \|\psi_z\|_{H^1}^2 + \|\operatorname{div}_x \psi\|_{H^1}^2 \big), \\[2mm]
|J_3| &\le C\|T\|\|\psi_z\| \le C(\delta^2 + \varepsilon) \big(\|\psi_z\|_{H^1}^2 + \|\phi_z\|_{H^1}^2 + \|\nabla_x \phi \|^2 + \delta^2 \mathcal{G}^s + \|\nabla_x^2 \psi\|_{H^1}^2 \big).
\end{aligned}
\eeq
 Similarly as in \eqref{eq:baro1} yield the following identity.
$$\nabla_x \big(vp'(v)\big) = p'(v)\nabla_x\phi + v\nabla_x S + \gamma^2v^{-\gamma-1} \nabla_x \tilde v \quad$$
Using Lemma~\ref{lem:shock-est}, Young's inequality, the above fact, and the \textit{a priori} assumption $\|\nabla_x\phi\|_{H^1}\leq C\e$, we obtain the following estimate:
\beq
\begin{aligned} \label{eq:addi}
|J_4| &\le \|\phi_z\| \|\nabla_x\big(vp'(v)\big)\|_{H^1} \|\psi_z\|_{H^1} +\|\nabla_x \psi_z\| \|\nabla_x \phi\|_{H^1}\|\psi_z\|_{H^1} + \|\nabla_x \tilde v\|_{L^\infty} \|\psi_z\| \|\nabla_x \psi_z\| \\[2mm]
&\le C(\varepsilon + \delta^2) \big(\|\phi_z\|^2 + \|\psi_z\|_{H^1}^2 \big).
\end{aligned}
\eeq
Finally, since the outflow condition in \eqref{main} ensures that $\psi_z \big|_{x_1 = 0} = 0$, which implies $J_5 = 0$. Therefore, by adding the result of zeroth order estimate, we obtain the desired result.
\end{proof}
\subsection{$H^1$-estimate for $\mathbf{u} - \mathbf{\tilde u}$}
 In order to close the $H^1$ estimate, we need to obtain good terms about the $H^2$-level term for the perturbation $\mathbf{u}$. \medskip
 
\noindent $\bullet$ \textbf{ $L^2$-estimate of $\nabla_x \psi$ :} Since the $\mathbf{u}$-perturbation has a parabolic structure, the corresponding $H^1$-level dissipation is obtained by applying the same relative entropy argument as in the zeroth-order estimate.
\begin{lemma}
Under the \textit{a priori} assumption, there exists a constant \label{L:psiH1}
\(C>0\) independent of \(\varepsilon\), \(\delta\), \(\chi\) and \(T\), such that
for all \(t \in [0,T]\), it holds
\begin{equation}
\begin{aligned}
 &\sup_{0 \le t \le T} \bigl\|(\phi, \psi)(t) \bigr\|^{2}
  + \int_{0}^{t} \bigl(\|\nabla_x\psi\|^{2} + \|(\phi, \psi) \big|_{x_1 = 0} \|^2_{L^2(\mathbb{T}^2)} \bigr)\, ds
\\[2mm]
 & \quad \le C \bigl\|(\phi_0, \psi_0 ) \bigr\|^{2}
 + Ce^{-C\delta \beta} + C\delta \int_{0}^{t} \|\psi_{1x_1x_1} \|^{2} \, ds \\
&\qquad + C\delta \int_0^t \big(\, |\dot X(s)|^2 + \mathcal{G}_3(s) + \delta \, \mathcal{D}(s) \, \big) ds
 + C\int_0^t \mathcal{G}^s(s) \, ds. 
\end{aligned}
\end{equation}
\end{lemma}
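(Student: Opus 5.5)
The plan is to derive an $L^2$ energy identity for the perturbation $(\phi,\psi)$ directly in the original $(v,\mathbf u)$ variables, using the perturbation system \eqref{eq:peq}. We do not pass through the effective velocity here, because the goal is the dissipation $\|\nabla_x\psi\|^2$, which is only available from the viscous term in the momentum equation. Concretely, we multiply \eqref{eq:peq}$_1$ by $-p'(v)\phi$ and \eqref{eq:peq}$_2$ by $\psi$, add, and integrate over $\Omega$. The vector identities of Lemma~\ref{vectori} are used exactly as in the proof of Lemma~\ref{L:bdesti}, and mass conservation $v_t+\mathbf u\cdot\nabla_x v=v\operatorname{div}_x\mathbf u$ absorbs the time derivative of the weight $-p'(v)$. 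This gives
\[
\frac{d}{dt}\int_\Omega \tfrac12\big(-p'(v)\phi^2+|\psi|^2\big)\,dx
-u_-\int_{\mathbb T^2}\tfrac12\big(-p'(v)\phi^2+|\psi|^2\big)\Big|_{x_1=0}dx'
+\int_\Omega v\big(\mu|\nabla_x\psi|^2+(\mu+\lambda)|\operatorname{div}_x\psi|^2\big)dx
=\sum_i J_i .
\]
Since $u_-<0$, the outflow boundary flux has a good sign. This is the source of the trace term $\|(\phi,\psi)|_{x_1=0}\|^2_{L^2(\mathbb T^2)}$ on the left-hand side; since $-p'(v)$ is bounded below, the trace of $\phi$ is controlled in particular.

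The remaining boundary contributions come from integrating by parts the pressure coupling and the viscous terms, namely $v p'(v)\phi\psi_1$ and $v\psi\cdot(\mu\partial_{x_1}\psi+(\mu+\lambda)\operatorname{div}_x\psi\, e_1)$ at $x_1=0$. They are not zero, because $\psi|_{x_1=0}=u_--\tilde u(-\sigma t-X-\beta)$. By Lemma~\ref{lem:shock-est} and \eqref{shiftproperrttty}, however, this trace is bounded by $C\delta e^{-C\delta t-C\delta\beta}$. Combining this with the a priori bound \eqref{perturbation_small} and a trace/interpolation bound on $\nabla_x\psi|_{x_1=0}$, these terms integrate in time to $Ce^{-C\delta\beta}$.

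The interior terms fall into three groups.
\begin{itemize}
\item The shift terms $\dot X\tilde v_{x_1}(-p'(v)\phi)$ and $\dot X\tilde{\mathbf u}_{x_1}\cdot\psi$ are bounded by $\delta|\dot X|^2$ plus $\int|\tilde v_{x_1}|(\phi^2+|\psi|^2)$. This uses Young's inequality together with $\|\tilde v_{x_1}\|_{L^1}\le C\delta$ (interpolating $L^\infty$ and $L^1$ norms of $\tilde v_{x_1}$), and similarly for the $\tilde u_{x_1}$ factor.
\item The profile terms $(vF)\tilde v_{x_1}\phi$, $(vF)\tilde u_{x_1}\psi_1$ and $(vS)\tilde v_{x_1}\psi_1$ are bounded pointwise by $|\tilde v_{x_1}|(\phi^2+|\psi|^2)$, using Lemma~\ref{L: estiF} and $|S|\le C|\phi|$. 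Lemma~\ref{L:psig} then converts the weighted $|\psi|^2$ into $\mathcal G^s+\delta\mathcal G_1+\delta\mathcal G_3+\delta^2\mathcal D$. The term $\delta\mathcal G_1$ is harmless after being controlled by Lemma~\ref{lemma5.1}; in the statement it is absorbed into $C\int\mathcal G^s$ and the listed terms.
\item The terms with $\operatorname{div}_x\mathbf u$, $\nabla_x v$ and $\nabla_x(vp'(v))$ are handled as $J_1$ and $J_4$ in Lemma~\ref{L:bdesti}: by $\|\tilde u_{x_1}\|_{L^\infty}\le C\delta^2$ plus $\varepsilon$-small nonlinear terms, which are absorbed into the viscous dissipation or bounded by $C\int\mathcal G^s$ after another application of Lemma~\ref{L:psig}.
\end{itemize}

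The main obstacle is the linear, non-small cross term between $\nabla_x\tilde v$ and $\psi$ coming from the viscous flux, namely $\int_\Omega \nabla_x v\cdot(\mu\psi\nabla_x\psi+(\mu+\lambda)\operatorname{div}_x\psi\,\psi)$ with $\nabla_x v=\tilde v_{x_1}e_1+\nabla_x\phi$. The $\tilde v_{x_1}$ part is only $O(\delta^2)$ in $L^\infty$. Young's inequality splits it as $\chi\|\nabla_x\psi\|^2+C\delta^2\int|\tilde v_{x_1}||\psi|^2$, which is again fine.

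The source of the term $C\delta\int\|\psi_{1x_1x_1}\|^2$ in the statement is the non-divergence pressure/profile coupling. We expect the $\delta\mathcal D$ contribution and the $\psi_{1x_1x_1}$ term to arise when $\nabla_x\phi$ is paired against $\tilde v_{x_1}\psi$ after integration by parts. We would rewrite $\phi_{x_1}$ through $p(v)-p(\tilde v)$, so that it contributes $\delta\mathcal D$. Any leftover second derivative of $\psi_1$ produced by integrating by parts the $\tilde u_{x_1}\operatorname{div}\psi$ interaction would be bounded by $\delta\|\psi_{1x_1x_1}\|^2$ via Young's inequality with weight $\sqrt\delta$.

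Finally, we integrate in time, use $-p'(v)\ge c>0$ so that the left-hand side bounds $\|(\phi,\psi)\|^2$, and absorb $\chi\|\nabla_x\psi\|^2$ into the dissipation.
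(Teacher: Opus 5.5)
Your overall architecture matches the paper: an $L^2$ identity, the good sign of the outflow flux at $x_1=0$, exponentially small boundary traces of $\psi$, and profile terms localized by $\tilde v_{x_1},\tilde u_{x_1}$ and handled by Lemma~\ref{L:psig}. However, your choice of energy creates a genuine gap. Testing the non-conservative system \eqref{eq:peq} with $-p'(v)\phi$ and $\psi$ produces, exactly as in \eqref{eq:ref} with $\phi_z,\psi_z$ replaced by $\phi,\psi$, the terms
\[
\frac12\int_\Omega \operatorname{div}_x\mathbf u\,\big(-vp''(v)\phi^2+p'(v)\phi^2-|\psi|^2\big)\,dx,
\qquad
\int_\Omega \phi\,\nabla_x\big(vp'(v)\big)\cdot\psi\,dx .
\]
Their parts $\int_\Omega \operatorname{div}_x\psi\,(\phi^2+|\psi|^2)\,dx$ and $\int_\Omega \gamma^2 v^{-\gamma-1}\phi\,\psi\cdot\nabla_x\phi\,dx$ are cubic, not localized near the shock, and contain only one dissipative factor.

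Appealing to $J_1,J_4$ of Lemma~\ref{L:bdesti} does not work. There the bound $C(\varepsilon+\delta^2)(\|\phi_z\|^2+\|\psi_z\|^2+\cdots)$ is harmless because $\|\phi_z\|^2$ and $\|\psi_z\|^2$ are themselves time-integrable dissipation. At zeroth order the analogue is $C\varepsilon(\|\phi\|^2+\|\psi\|^2)$. This is not absorbed by $\|\nabla_x\psi\|^2$, not controlled by the localized $\mathcal G^s$, and not integrable on $(0,T)$ uniformly in $T$. Interpolation does not rescue it: the $x'$-independent Fourier mode behaves one-dimensionally and leaves a total power of dissipative norms below two. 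Gronwall would require $\|\operatorname{div}_x\psi\|_{L^\infty}\in L^1(0,\infty)$, which you do not have. The variable coefficient $v$ in front of the viscous term causes a milder problem of the same kind: $\int_\Omega\nabla_x\phi\cdot(\mu\psi\nabla_x\psi+(\mu+\lambda)\operatorname{div}_x\psi\,\psi)\,dx$ yields $C\varepsilon\mathcal D$, which is not on the stated right-hand side.

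The missing idea is the conservative relative-entropy structure the paper uses: multiply the $\rho$-weighted system \eqref{eq:ori} by $-(p(v)-p(\tilde v))$ and by $\psi$. This has three effects.
\begin{itemize}
\item Mass conservation converts $\rho(\partial_t+\mathbf u\cdot\nabla_x)$ exactly into $\partial_t(\rho\eta)+\operatorname{div}_x(\rho\mathbf u\eta)$ with $\eta=Q(v|\tilde v)+\frac12|\psi|^2$.
\item The pressure coupling becomes the pure divergence $\operatorname{div}_x\big((p(v)-p(\tilde v))\psi\big)$.
\item The viscous operator has constant coefficients.
\end{itemize}
Every remaining interior term then carries $\tilde v_{x_1}$ or $\tilde u_{x_1}$: the $p(v|\tilde v)\tilde v_{x_1}$ term, the $F\tilde v_{x_1}$ and $F\tilde u_{x_1}\psi_1$ terms, and the $\dot X$ terms. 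Your estimates for those go through.

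Your boundary treatment is also fine: bounding $\|\nabla_x\psi|_{x_1=0}\|_{L^2(\mathbb T^2)}$ by the a priori $H^2$ bound and integrating $\delta e^{-C\delta t}$ in time does give $Ce^{-C\delta\beta}$. Your guess about the origin of $C\delta\int_0^t\|\psi_{1x_1x_1}\|^2$ is off, though. In the paper it comes from applying Young's inequality to that same boundary term together with the trace bound \eqref{eq:treatbd} for $\psi_{1x_1}|_{x_1=0}$, not from an interior pressure--profile interaction.
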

\begin{proof}
We can rewrite our perturbed equations as follows:
\beq
\left\{
\begin{aligned} \label{eq:ori}
&\rho\big(\partial_t \phi + \mathbf{u} \cdot \nabla_x \phi \big) - \rho \dot X \tilde v_{x_1} + F \tilde v_{x_1} = \operatorname{div}_x \psi, \\[0.5ex]
&\rho\big(\partial_t \psi + \mathbf{u} \nabla_x \psi \big) + \nabla_x \big(p(v)-p(\tilde v)\big) - \rho \dot X\, \nabla_x \mathbf{\tilde u} + F\, \nabla_x \mathbf{\tilde u} = \mu \Delta_x \psi + (\mu + \lambda) \nabla_x \operatorname{div}_x \psi.
\end{aligned}
\right.
\eeq
Multiplying $\eqref{eq:ori}_1$ by $-\bigl(p(v) - p(\tilde v) \bigr)$ and $\eqref{eq:ori}_2$ by $\psi$, and adding them together, we get the following:
\begin{equation}
\begin{aligned}
&\partial_t \bigl(\rho \eta(V | \tilde V) \bigr) + \mu|\nabla_x \psi|^2 + (\mu + \lambda)|\operatorname{div}_x \psi|^2 + \operatorname{div}_x \bigl(\rho \mathbf{u} \eta(V|\tilde V)\bigr)\\[1mm] \nonumber
&\qquad = -\dot X(t)Y(t) + J_{bd} + J_{hyper} + J_{para},
\end{aligned}
\end{equation}
where
\begin{align}\nonumber
&Y(t) := -\rho \bigl(p'(\tilde v) \phi \tilde v_{x_1} - \psi_1 \tilde u_{x_1} \bigr)\nonumber\\[1mm]
&J_{bd} := - \operatorname{div}_x\Bigl(\bigl(p(v) - p(\tilde v)\bigr)\psi\Bigr) + \operatorname{div}_x\bigl(\mu(\nabla_x \psi) \psi + (\mu + \lambda) (\operatorname{div}_x \psi) \psi \bigr)\nonumber\\[1mm]
&J_{hyper} := Fp'(\tilde v)\phi\tilde v_{x_1} + \sigma_* p(v|\tilde v) \tilde v_{x_1}, \nonumber \quad J_{para}:= - F \psi \tilde u_{x_1}\nonumber.
\end{align}
The derivation  of the above equation is identical to the equation of the zeroth order estimate. Here, $V = (v, \mathbf{u})$ and $\tilde V = (\tilde v, \mathbf{\tilde u})$ are the solution vectors, and  $$\eta(V | \tilde V) = Q(v | \tilde v) + \frac{|\mathbf{u} - \mathbf{\tilde u}|^2}{2}$$ denotes the relative entropy.\\
As a similar way of proof of Lemma \ref{L:psig}, we can control the first term on the right hand side as
\begin{equation}
\begin{aligned} \nonumber
\Big|\dot X(t) \int_{\Omega} Y(t) dx \Big| &\le C |\dot X(t)| \int_{\Omega} \Big| \bigl(\phi (\tilde v)_{x_1} + \psi_1 (\tilde u_1)_{x_1}\bigr) \Big| \, dx \\[2mm]
&\le C|\dot X(t)|\Bigl(\|(\tilde v)_{x_1}^{1/2} \phi\| \|(\tilde v)_{x_1}^{1/2}\| + \|(\tilde u_1)_{x_1}^{1/2} \psi_1\| \| (\tilde u_1)_{x_1}^{1/2}\| \Bigr)\\[2mm]
&\le C |\dot X(t)|\Bigl(\delta^{1/2} \sqrt{\mathcal G^s(t)} + \delta^{3/2}\sqrt{\mathcal D(t)} + \delta \sqrt{\mathcal G_3(t)} \Bigr) \\[1mm]
&\le C {\delta} |\dot X(t)|^2 + C\Bigl(\mathcal G^s(t) + \delta \mathcal G_3(t) + \delta^2 \mathcal D(t) \Bigr)
\end{aligned}
\end{equation}
To treat the hyperbolic and parabolic bad term, we will use the Lemmas~\ref{lem:shock-est}, \ref{L:psig}, and Young's inequality.
\begin{equation}
\begin{aligned} \nonumber
\Big|\int_{\Omega}  J_{hyper}\, dx \Big| &\le C \int_{\Omega} \Bigl( |\phi|^2 (\tilde v)_{x_1} + \phi \psi_1 (\tilde v)_{x_1} + p(v|\tilde v) (\tilde v)_{x_1} \Bigr)\,dx \\[2mm]
&\le C \Bigl(\mathcal G^s(t) + \delta \mathcal G_3 + \delta^2 \mathcal D(t) \Bigr)
\end{aligned}
\end{equation}
The bad term that arises from the parabolic equation can be controlled in a similar way.
\begin{equation}
\begin{aligned} \nonumber
\Big|\int_{\Omega} J_{para}\, dx \Big| &\le C \int_{\Omega} \bigl( |\psi|^2 (\tilde u_1)_{x_1} + \phi \psi_1 (\tilde u_1)_{x_1} \bigr)\,dx 
\le C \bigl(\mathcal G^s(t) + \delta \mathcal G_1(t) + \delta \mathcal G_3(t)+ \delta^2 \mathcal D(t) \bigr)
\end{aligned}
\end{equation}
To control the term of boundary effects $J_{bd}$, observe that due to Lemma \ref{lem:shock-est}, we have the following for given $\beta > 0$:
\beq
\begin{aligned} \label{eq:psmall}
&\|\psi_1 \big|_{x_1 = 0}\|_{L^\infty} = \|\tilde u_1(-\sigma t - X - \beta) - u_-\|_{L^\infty} \le C\delta e^{-C\delta t - C\delta \beta}, \\[2mm]
& \|\big(p(v) - p(\tilde v)\big) \big|_{x_1 = 0}\|_{L^1(\mathbb T^2)} \le C \|\mathbf 1_{\mathbb T^2}\| \|(v - \tilde v) \big|_{x_1 = 0}\|_{L^2(\mathbb T^2)} \le C\big(1 + \|(v - \tilde v) \big|_{x_1 = 0}\|_{L^2(\mathbb T^2)}^2 \big).
\end{aligned}
\eeq

Hence, we can control the boundary term by Young's inequality and the interpolation inequality:
\begin{equation}
\begin{aligned} \label{eq:jbd}
\int_{\Omega} J_{bd}\, dx &= \int_{\mathbb{T}^2} \bigl( (p(v) - p(\tilde v)\bigr) \psi \big|_{x_1 = 0} \, dx' - \int_{\mathbb{T}^2} \bigl(\mu\nabla_x \psi \cdot \psi + (\mu + \lambda) \operatorname{div}_x \psi \psi \bigr) \big|_{x_1 = 0} \, dx' \\[2mm]
&\le \|\big(p(v) - p(\tilde v)\big)\big|_{x_1 = 0} \|_{L^1(\mathbb{T}^2)} \|\psi_1\big|_{x_1 = 0}\|_{L^\infty}\\[1mm]
&\quad + C\,\|\psi_1\big|_{x_1=0}\|_{L^\infty} \|\nabla_x \psi_1 \big|_{x_1 = 0}\|_{L^2(\mathbb{T}^2)} \|\|\mathbf{1}_{\mathbb{T}^2}\|_{L^2(\mathbb{T}^2)}\\[2mm]
&\le C\delta e^{-C\delta t - C \delta \beta} \big(1 + \|\phi\big|_{x_1 = 0} \|_{L^2(\mathbb T^2)}^2 + \|\psi_{1x_1} \big|_{x_1 = 0}\|_{L^2(\mathbb{T}^2)}^2 \big) \\[2mm]
&\le C\delta e^{-C\delta t - C \delta \beta} + C\delta \big(\, \|\phi\big|_{x_1 = 0} \|_{L^2(\mathbb T^2)}^2 + \|\psi_{1x_1}\|^2 + \|\psi_{1x_1x_1} \|^2\big),
\end{aligned}
\end{equation}
where the last line is derived from the following fact, which can be derived by integration by parts.
\beq
\begin{aligned} \label{eq:treatbd}
|\psi_{1x_1}\big|_{x_1 =0}|^2 = -2 \int_{0}^\infty \psi_{1x_1} \psi_{1x_1x_1} \, dx_1 \le \|\psi_{1x_1}\|_{L^2(\mathbb{R}^+)}^2 + \|\psi_{1x_1x_1}\|_{L^2(\mathbb{R}^+)}^2
\end{aligned}
\eeq
\end{proof}
\noindent $\bullet$ \textbf{$L^2$-estimate of $\nabla_x^2 \psi$ :}
 Since we have already obtained the good term for $\nabla_x \psi_{z}$ in Lemma \ref{L:bdesti}, it remains only to estimate a $L^2$-norm of $\psi_{x_1 x_1}$.
\begin{lemma}Under the \textit{a priori} assumption, there exists a constant\label{L:psiH2}
\(C>0\) independent of \(\varepsilon\), \(\delta\), \(\chi\) and \(T\), such that
for all \(t \in [0,T]\), it holds
\begin{equation}
\begin{aligned}\nonumber
&\sup_{0 \le t \le T} \|(\phi,\psi)(t)\|_{H^{1}}^{2}
+ \delta \int_{0}^{t} \big| \dot X(s) \big|^2 ds
+ \int_{0}^{t} \Bigl(
\mathcal{G}^{s}(s)+\mathcal{D}(s)+\mathcal{G}_{3}(s) +\|\psi_t\|^{2}\\
&\hspace{73mm} +\|\nabla_x\psi\|_{H^1}^{2}
+\|\nabla_{x'} \phi\big|_{x_1=0}\|_{L^2(\mathbb{T}^2)}^{2}
\Bigr)\,ds \nonumber\\[1mm]
&\quad\le C \|(\phi_0,\psi_0)\|_{H^{1}}^{2}
+Ce^{-C\delta\beta}
+C(\varepsilon + \delta^2 + \chi)\int_{0}^{t}
\|\nabla^{2}\operatorname{div}_x\psi\|^{2}
\,ds
\end{aligned}
\end{equation}
\end{lemma}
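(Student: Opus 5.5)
The plan is to test the momentum equation $\eqref{eq:peq}_2$ against a multiple of $-\partial_{x_1}^2\psi$ (in fact against $-\Delta_x\psi$, split into normal and tangential parts) and to combine the result with Lemmas~\ref{lemma5.1}, \ref{L:bdesti} and \ref{L:psiH1}. The tangential second derivatives $\nabla_x\psi_z$ already appear as dissipation in Lemma~\ref{L:bdesti}, so the new information needed is $\int_0^t\|\psi_{x_1x_1}\|^2$ together with $\int_0^t\|\psi_t\|^2$.

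\textbf{Step 1: the $\psi_t$ identity.} I will multiply $\eqref{eq:peq}_2$ by $\psi_t$ and integrate over $\Omega$. The viscous terms give
\[
\frac{d}{dt}\int_\Omega \frac{v}{2}\big(\mu|\nabla_x\psi|^2+(\mu+\lambda)|\operatorname{div}_x\psi|^2\big)\,dx
\]
plus commutators with $v_t$ and $\nabla_x v$, which are $O(\varepsilon+\delta)$ times dissipative quantities. The term produced by integration by parts is
\[
\int_{\mathbb T^2}v\big(\mu\partial_{x_1}\psi+(\mu+\lambda)\operatorname{div}_x\psi\,e_1\big)\cdot\psi_t\big|_{x_1=0}\,dx'.
\]
On the boundary, $\psi_t|_{x_1=0}=-\tilde{\mathbf u}_t|_{x_1=0}=(\sigma+\dot X)\tilde u'(-\sigma t-X-\beta)\,e_1$, which is of size $\delta^2e^{-C\delta t-C\delta\beta}$. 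Using \eqref{eq:treatbd}, this boundary term is bounded by
\[
C\delta e^{-C\delta t-C\delta\beta}+C\delta\big(\|\psi_{1x_1}\|^2+\|\psi_{1x_1x_1}\|^2\big).
\]
The remaining terms are handled as follows. The term $\mathbf u\nabla_x\psi\cdot\psi_t$ and the term $vp'(v)\nabla_x\phi\cdot\psi_t$ are absorbed by Young's inequality into $\tfrac12\|\psi_t\|^2$, at the cost of $C\|\nabla_x\psi\|^2$ and $C\|\nabla_x\phi\|^2$. The profile terms $(vS)\nabla_x\tilde v$ and $(vF-\dot X)\tilde{\mathbf u}_{x_1}$ are bounded by $C(\mathcal G^s+\delta\mathcal G_1+\delta\mathcal G_3+\delta^2\mathcal D+\delta^2|\dot X|^2)$, using Lemmas~\ref{L: estiF} and \ref{L:psig}. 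The resulting estimate is
\[
\sup_t\|\nabla_x\psi\|^2+\int_0^t\|\psi_t\|^2\,ds\le C\|\nabla_x\psi_0\|^2+C\int_0^t\big(\|\nabla_x\phi\|^2+\|\nabla_x\psi\|^2+\mathcal G^s+\delta|\dot X|^2\big)ds+\dots
\]

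\textbf{Step 2: recovering $\psi_{x_1x_1}$.} I will solve $\eqref{eq:peq}_2$ for the normal second derivatives:
\[
v\big(\mu\partial_{x_1}^2\psi+(\mu+\lambda)\partial_{x_1}^2\psi_1e_1\big)=\psi_t+\mathbf u\nabla_x\psi+vp'(v)\nabla_x\phi+\dots-v(\text{terms containing }\partial_z).
\]
The terms with at least one tangential derivative are controlled by $\|\nabla_x\psi_z\|$ from Lemma~\ref{L:bdesti}. Note that $\partial_{x_1}\operatorname{div}_x\psi$ contains $\partial_{x_1}\partial_z\psi'$, so those pieces are tangential as well. This gives
\[
\|\psi_{x_1x_1}\|^2\le C\big(\|\psi_t\|^2+\|\nabla_x\psi\|^2+\|\nabla_x\phi\|^2+\|\nabla_x\psi_z\|^2+\mathcal G^s+\delta^2|\dot X|^2\big).
\]

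\textbf{Step 3: closing the estimate.} I will add the following: Lemma~\ref{lemma5.1}, a large multiple of Lemma~\ref{L:bdesti}, Lemma~\ref{L:psiH1}, a small multiple of Step 1, and a smaller multiple of Step 2. The boundary trace $\int|\nabla_{x'}\phi|^2|_{x_1=0}$ left by Lemma~\ref{lemma5.1} with constant $C_\nu$ is absorbed by the large multiple of Lemma~\ref{L:bdesti}; the order of choosing constants is $\nu$, then that multiple, then $\delta$ and $\varepsilon$. The $\delta\|\psi_{1x_1x_1}\|^2$ terms are absorbed by Step 2. For $\|\nabla_x\phi\|^2$, I will write $\nabla_x\phi$ through $\nabla_x(p(v)-p(\tilde v))$ plus $\delta$-weighted shock terms, so that it is bounded by $C(\mathcal D+\delta^2\mathcal G^s)$. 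The $H^3$-level quantities appearing in Lemma~\ref{L:bdesti} (the bound $\|\nabla_x\psi\|_{H^2}$) will be kept on the right-hand side as $C(\varepsilon+\delta^2+\chi)\|\nabla^2\operatorname{div}_x\psi\|^2$, plus third-order tangential pieces absorbed similarly.

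\textbf{Main obstacle.} I expect the main difficulty to be the circularity of the constants. Step 1 needs $\|\nabla_x\phi\|^2$, which must be recovered from $\mathcal D$, while $\mathcal D$ has only an $O(1)$ coefficient. The weights must be chosen in the order: Lemma~\ref{lemma5.1} with weight $1$, then Step 1 with a small weight $\chi_1$, then Step 2 with a weight $\chi_2\ll\chi_1$. The concern is that no term needs a coefficient larger than the good term available to absorb it. I will also verify carefully that the $\|\nabla_x\psi\|_{H^2}^2$ term from Lemma~\ref{L:bdesti} has only the small prefactor $(\varepsilon+\delta^2)$, so that it does not obstruct the final absorption.
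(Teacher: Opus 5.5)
Your proposal is correct and follows the paper's proof essentially step for step: you test the momentum equation with $\psi_t$, with the boundary term made exponentially small via \eqref{eq:convs} and \eqref{eq:treatbd}; you recover $\psi'_{x_1x_1}$ and $\psi_{1x_1x_1}$ directly from the momentum equation using $\|\nabla_x\psi_z\|$ from Lemma~\ref{L:bdesti}; and you combine with Lemmas~\ref{lemma5.1}, \ref{L:bdesti}, \ref{L:psiH1}, except that in your weight bookkeeping Lemma~\ref{L:psiH1} must itself carry a small weight $\kappa$ (its right-hand side has an $O(1)$ multiple of $\int_0^t\mathcal{G}^s\,ds$), so the order is $\kappa$, then $\chi_1\ll\kappa$, $\chi_2\ll\chi_1$, $\nu\ll\chi_2$, the large multiple of Lemma~\ref{L:bdesti}, and finally $\delta,\varepsilon$. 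One caveat, shared with the paper: the remainder in Lemma~\ref{L:bdesti} is the full $C(\varepsilon+\delta^2)\int_0^t(\|\nabla_x\psi\|_{H^2}^2+\|\nabla_x\phi\|_{H^1}^2)\,ds$, so either argument actually leaves $C(\varepsilon+\delta^2)\int_0^t(\|\nabla_x^3\psi\|^2+\|\nabla_x^2\phi\|^2)\,ds$ on the right rather than only $\|\nabla^2\operatorname{div}_x\psi\|^2$; these terms cannot be ``absorbed similarly'' at this level and are closed only when Lemmas~\ref{L:22}--\ref{L:24} are combined at the end.
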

\begin{proof}
By virtue of the lemma \ref{L:bdesti}, it suffices to obtain a good term involving $\psi_{x_1x_1}$. Furthermore, we also need to obtain a good term for $\psi_t$. To do this, we performed the following computations.
\begin{itemize}
\item[(1)] \textbf{$L^2$-estimate of $\psi_t$:}
\end{itemize}
Multiplying $(\ref{eq:ori})_2$ by $\psi_t$, we get the following equation.
\begin{equation}
\begin{aligned}\nonumber
\rho\, |\psi_t|^2
  + \left(\rho\, \mathbf{u} \nabla_x \psi \right) \cdot \psi_t
  + \nabla_x\!\bigl(p(v)-p((\tilde v))\bigr)\cdot \psi_t
  - \rho\, \dot X(t)\,\tilde{\mathbf u}_{x_1}\cdot \psi_t
  + F\,\tilde{\mathbf u}_{x_1}\cdot \psi_t &\\[1mm]
= \mu\, \Delta_x \psi \cdot \psi_t
  + (\mu+\lambda)\, \nabla_x \operatorname{div}_x \psi \cdot \psi_t&
\end{aligned}
\end{equation}
Hence, by using integration by parts, we get
\begin{align}\nonumber
&\int_{\Omega} \rho |\psi_t|^2 dx+ \frac{(\mu+\lambda)}{2}\frac{d}{dt}\int_{\Omega} |\operatorname{div}_x \psi|^2 dx + \frac{\mu}{2}\frac{d}{dt} \int_{\Omega} |\nabla_x \psi|^2 dx \\[2mm]
\nonumber&\quad=\underbrace{\int_{\Omega} \Big( \mu \operatorname{div}_x(\nabla_x \psi \cdot \psi_t) + (\mu+\lambda)\operatorname{div}_x(\psi_t \cdot \operatorname{div}_x \psi)\Big) dx}_{J_1}
 - \underbrace{\int_{\Omega}(\rho \mathbf{u}\nabla_x \psi) \cdot \psi_t dx}_{J_2}\\
\nonumber&\qquad
 - \underbrace{\int_{\Omega}\nabla_x\bigl(p(v)-p(\tilde v)\bigr) \cdot \psi_t dx}_{J_3} 
+ \underbrace{\dot X(t) \int_{\Omega} \, \tilde{\mathbf u}_{x_1} \cdot \psi_t dx}_{J_4} -\underbrace{\int_{\Omega} F \tilde{\mathbf u}_{x_1} \cdot \psi_t dx.}_{J_5}
\end{align}
First, from the fact that $\mathbf{u}$ is a constant vector in time at $\{x_1 = 0\}$, and $|\sigma + \dot X| \le C$ for some constant $C > 0$, given $\beta > 0$, the following holds for $\psi_t$ on the boundary.
\beq
\begin{aligned} \label{eq:convs}
\| \psi_t \big|_{x_1 = 0}\|_{L^\infty} &= \big\|\big(\mathbf{u}_t - \mathbf{\tilde u}(x_1 - \sigma t - X -\beta)_t\big)\big|_{x_1=0} \big\|_{L^\infty} \\[3mm]
&= |\sigma + \dot X|\, \|\mathbf{\tilde u}_{x_1} \big|_{x_1 =0}\|_{L^\infty} \le C\delta^2 e^{-C\delta t - C\delta \beta}.
\end{aligned}
\eeq
Similarly as in \eqref{eq:jbd} with \eqref{eq:convs}, we obtain the following.
\begin{equation}
\begin{aligned}\nonumber
|J_1| \le (2\mu + \lambda) \int_{\mathbb{T}^2} \big|\psi_{1x_1}\psi_{1t}\big| \Big|_{x_1 = 0} dx' 
&\le C \|\psi_{1t}\|_{L^\infty} \|\mathbf{1}_{\mathbb{T}^2}\|_{L^2(\mathbb{T}^2)}\|\psi_{1x_1}\big|_{x_1 = 0}\|_{L^2(\mathbb{T}^2)} \\[2mm]
&\le C\delta^2 e^{-C\delta t - C\delta \beta} + C\delta^2 \|\psi_{1x_1} \big|_{x_1 = 0}\|_{L^2(\mathbb{T}^2)}^2
\end{aligned}
\end{equation}
For $J_2$, using Young's inequality, we can get 
$$ |J_2| \le C \int_{\Omega} \big| (\nabla_x \psi)\, \psi_t \big| \, dx
\le C\chi\|\psi_t\|^2 + C_{\chi} \|\nabla_x \psi \|^2. $$
Also, as in above, we get
$$|J_3| \le C_{\chi} \mathcal{D} + C\chi \|\psi_t\|^2. $$
Using Lemma~\ref{lem:shock-est}, we obtain the following estimate.
\begin{equation}
\begin{aligned}\nonumber
|J_4| &\le C |\dot X(t)| \int_{\Omega} \,|(\tilde u)_{x_1}||\psi_t| dx 
\le C\,\|\tu_{x_1}\|\big( |\dot X(t)|^2+\|\psi_t\|^2\big)
\le C\delta^{3/2} \big(|\dot X(t)|^2 + \|\psi_t\|^2\big)
\end{aligned}
\end{equation}
Finally, Lemma \ref{L:psig} yields the following inequality for $J_5$.
\begin{align*}
|J_5|
\le C \|vF(\tilde u)_{x_1}\| \|\psi_t\|
&\le C\|({\tilde u}_{x_1})^{1/2}\|_{L^\infty} \|vF({\tilde u})_{x_1}^{1/2} \| \|\psi_t\| \\[2mm]
&\le C\delta\big(\mathcal{G}^s(t) + \delta \, \mathcal{G}_3(t) + \delta^2 \, \mathcal{D}(t) + \|\psi_t\|^2 \big)
\end{align*}
Collecting all these estimates, we obtain the following bound.
\beq
\begin{aligned}
\sum_{i=1}^5 |J_i| \le C(\delta, \chi) \Big(\|\psi_{1x_1} \big|_{x_1 = 0}\|_{L^2(\mathbb{T}^2)}^2 + \mathcal{G}_3 + \mathcal{G}^s + \delta|\dot X|^2 + \|\psi_t\|^2\Big) + C\,\Big(\|\nabla_x \psi\|^2 + \mathcal{D} \Big)\label{eq:18}
\end{aligned}
\eeq

\begin{itemize}
\item[(2)] \textbf{$L^2$-estiamte of $\psi'_{x_1x_1}$:}
\end{itemize}

Rewriting the second and third components of $\eqref{eq:peq}_2$, we have the following:
\begin{equation}
\begin{aligned}\nonumber
\psi_t'
+ \mathbf{u} \nabla_x\psi'
+\bigl(p(v)-p(\tilde v)\bigr)_{z}
- (\mu+\lambda)v\, \operatorname{div}_x\psi_{z}
- \mu v\,\Delta_{x'}\psi'
= \mu v\,\psi'_{x_1x_1}.
\end{aligned}
\end{equation}
Applying $L^2$-Minkowski's inequality to the above equation, we obtain the following estimate.
\begin{align}
\int_{0}^{t}\|\psi'_{x_{1}x_{1}}\|^{2}\,ds
\le
C\int_{0}^{t}\Bigl(\|\psi'_{t}\|^{2}+\|\nabla_{x}\psi'\|^{2}+\mathcal{D}(s)
+\|\nabla_{x}\psi_{z}\|^{2}\Bigr)\,ds \label{eq:19}
\end{align}

\begin{itemize}
\item[(3)] \textbf{$L^2$-estimate of $\psi_{1x_1x_1}$:}
\end{itemize}

Sobolev norms of the shock profiles $\tilde{v}$ and $\tilde{u}_1$ do not exhibit temporal decay. Therefore, to estimate $\psi_{1 x_1 x_1}$, we exploit Young's inequality instead of the $L^2$-Minkowski inequality.\\
The first component of the momentum equation $\eqref{eq:peq}_1$ can be rewritten as follows:
\begin{equation}
\begin{aligned}\nonumber
&\psi_{1t}
+ \mathbf{u} \cdot \nabla_x\psi_1
+\bigl(p(v)-p(\tilde v)\bigr)_{x_1}
- v \big((\mu+\lambda)\operatorname{div}_{x'}\psi_{x_1}'
- \mu \,\Delta_{x'}\psi_1 \big)+ (vF) \tilde u_{1x_1} + \dot X \tilde u_{1x_1}\\
&\quad = (2\mu + \lambda) v\,\psi_{1x_1x_1}.
\end{aligned}
\end{equation}
Multiplying both sides by $\psi_{1x_1x_1}$, integrating over the spatial domain, and applying Lemma~\ref{L: estiF} and Young's inequality, we obtain
\beq
\begin{aligned}\nonumber
(2\mu + \lambda)\int_{\Omega} v|\psi_{1x_1x_1}|^2 \, dx 
&\le C_{\chi}\big(\|\psi_{1t}\|^2 + \|\nabla_x \psi_1\|^2 + \mathcal{D} + \|\nabla_x \psi_{z}\|^2\big)\\
&\quad + \|(vF)(\tilde u_{x_1})\|\|\psi_{1x_1x_1}\| + \dot X \|\tilde u_{x_1}\|\|\psi_{1x_1x_1}\| + \chi \|\psi_{1x_1x_1}\|^2 \\[3mm]
&\le C_{\chi} \big(\|\psi_{1t}\|^2 + \|\nabla_x \psi_1\|^2 + \mathcal{D} + \|\nabla_x \psi_{z}\|^2 \big) \\[2mm]
&\quad + C\delta \big(\mathcal{G}^s + \mathcal{G}_3 + \mathcal{D} + \delta |\dot X|^2 \big) + (\chi + \delta) \|\psi_{1x_1x_1}\|^2.
\end{aligned}
\eeq
Consequently, choosing $\chi$ and $\delta$ to be sufficiently small yields the following estimate.
\begin{align}\label{eq:new}
\|\psi_{1x_1x_1}\|^2 \le C\big(\|\psi_{1t}\|^2 + \|\nabla_x \psi_1\|^2 + \|\nabla_x \psi_{z}\|^2 + \mathcal{D} \big) + C\delta \big(\,\mathcal{G}_3 + \mathcal{G}^s\,\big) + C\delta^2 |\dot X|^2.
\end{align}
Combining the results of Lemma \ref{L:bdesti} and \ref{L:psiH1} with 
\eqref{eq:18}, \eqref{eq:19} and \eqref{eq:new}, we obtain the conclusion of Lemma \ref{L:psiH2}.
\end{proof}

\subsection{Time-derivative estimates for recovering the highest normal derivatives} 

As discussed earlier, to estimate the normal derivatives, we first derive estimates for the time derivatives. To this end, we begin by estimating the second order derivative of the shift $X$.
\begin{lemma} Under the \textit{a priori} assumption, for sufficiently small $\delta > 0$, the following estimate holds. \label{L:21}
$$|\ddot X(t)|^2 \le C \big(|\dot X(t)|^2 + \delta \|\phi_t\|^2 + \delta^{3} \mathcal{G}^s(t) \big)$$
Here, $\phi_t$ satisfies the following estimate for given $\delta > 0$.
\begin{align}
\|\phi_t\|^2 \le C \Big(\mathcal{D}(t) + \|\operatorname{div}_x \psi\|^2 + \delta^2 |\dot X(t)|^2 + \delta\, \mathcal{G}^s(t) + \delta\, \mathcal{G}_3(t) \Big) \label{eq:7}
\end{align}
Therefore, $\ddot{X}(t)$ is bounded and belongs to $L^2$, which means that $\dot{X}(t)$ is absolute continuous.
\end{lemma}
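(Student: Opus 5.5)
Both estimates come from two sources: differentiating the shift ODE \eqref{X(t)} in time, and using the mass equation $\eqref{eq:peq}_1$ to rewrite $\phi_t$.

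\textbf{The estimate \eqref{eq:7} for $\phi_t$.} This is the easier part, so we do it first. From $\eqref{eq:peq}_1$ we solve
\[
\phi_t=-\mathbf{u}\cdot\nabla_x\phi+\dot X\tilde v_{x_1}-(vF)\tilde v_{x_1}+v\operatorname{div}_x\psi
\]
and bound each term in $L^2$.

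\begin{itemize}
\item The transport term: since $|\mathbf{u}|\le C$ and $\nabla_x\phi$ is comparable to $\nabla_x(p(v)-p(\tilde v))$ up to a term $|\phi||\tilde v_{x_1}|$, we get $\|\mathbf{u}\cdot\nabla_x\phi\|^2\le C(\mathcal D+\delta^2\mathcal G^s)$.
\item The shift term: $\|\dot X\tilde v_{x_1}\|^2\le C\delta^3|\dot X|^2$ by Lemma~\ref{lem:shock-est}, since $\|\tilde v_{x_1}\|^2\sim\delta^3$.
\item The $F$-term: $\|(vF)\tilde v_{x_1}\|^2\le C\delta^2\int|\tilde v_{x_1}|(|\phi|^2+|\psi_1|^2)$. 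By Lemma~\ref{L:psig} this is at most $C\delta(\mathcal G^s+\mathcal G_1+\mathcal G_3+\mathcal D)$.
\item The divergence term is bounded directly by $C\|\operatorname{div}_x\psi\|^2$.
\end{itemize}

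Summing these gives \eqref{eq:7}. A $\mathcal G_1$ contribution appears with a $\delta$ factor; I will absorb it into the listed terms or carry it harmlessly.

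\textbf{The estimate for $\ddot X$.} Write $\dot X=-\frac{M}{\delta}(Y_1+Y_2)$ and differentiate in $t$. Every profile quantity $a,\tilde v,\tilde h$ depends on $x_1-\sigma t-X-\beta$, so its time derivative equals $-(\sigma+\dot X)$ times its $x_1$-derivative. Hence
\[
\ddot X=-\frac{M}{\delta}\int_\Omega\Big[(\text{profile weight})\,\partial_t\big(\rho(p(v)-p(\tilde v))\big)\ \text{resp.}\ \partial_t(\rho\phi)\Big]\,dx+(\sigma+\dot X)\cdot\frac{M}{\delta}\int_\Omega \partial_{x_1}(\text{profile weight})\,(\cdots)\,dx .
\]
Here the profile weights are $a\tilde h_{x_1}/\sigma^*$ and $a\,p(\tilde v)_{x_1}$, both of size $|\tilde v_{x_1}|$ up to $O(1)$ factors.

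For the second group, $|\partial_{x_1}(\text{weight})|\le C\delta|\tilde v_{x_1}|$ by Lemma~\ref{lem:shock-est} and \eqref{inta}. Cauchy--Schwarz against $\|\tilde v_{x_1}\|_{L^1}\sim\delta$ gives a bound $C(\mathcal G^s)^{1/2}\delta^{1/2}$ multiplied by the prefactor $\frac{1}{\delta}\cdot\delta$. Its square is $C\delta\,\mathcal G^s$; with more care about the size of the weights (using $|\tilde v_{x_1}|\le C\delta^2$) this should reach the claimed $C\delta^3\mathcal G^s$.

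The first group contains $\phi_t$ and $\rho_t$, plus $p'(\tilde v)_t\phi$ pieces. The $\phi_t$ part is bounded by $\frac{C}{\delta}\|\tilde v_{x_1}\|\,\|\phi_t\|\le C\delta^{1/2}\|\phi_t\|$, whose square is $C\delta\|\phi_t\|^2$. The profile-derivative pieces again produce $(\sigma+\dot X)$ times $\tilde v_{x_1}$-weighted $\phi$, handled as above. Since $\dot X$ appears linearly in these pieces, they also feed the $C|\dot X|^2$ term.

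\textbf{Main obstacle.} The delicate point is the bookkeeping of $\delta$-powers needed to obtain exactly $\delta^3\mathcal G^s$ rather than $\delta\,\mathcal G^s$. The lossy step is pairing $\frac1\delta$ with $\int|\tilde v_{x_1}|\,|\tilde v_{x_1}||\phi|$. If the sharp power fails, I would accept a weaker power of $\delta$, which still closes the later estimates.

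\textbf{Boundedness and $L^2$ integrability of $\ddot X$.} By \eqref{bddx12} and the a priori bound, the right-hand side is bounded pointwise in $t$. Its time integral is finite by Lemma~\ref{L:psiH2} combined with \eqref{eq:7}. Hence $\ddot X\in L^2(0,T)\cap L^\infty$, and $\dot X$ is absolutely continuous.
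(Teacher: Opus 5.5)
\textbf{The gap: the $\ddot X$ bound cannot be upgraded.} You differentiate $\dot X=-\frac{M}{\delta}(Y_1+Y_2)$ honestly, including the $t$-dependence of the profile weights $a\tilde h_{x_1}/\sigma^*$ and $a\,p(\tilde v)_{x_1}$. These weights contribute $(\sigma+\dot X)\frac{M}{\delta}\int_\Omega\partial_{x_1}(\text{weight})(\cdots)\,dx$. Your bound $C\delta^{1/2}\sqrt{\mathcal G^s}$ for this term is sharp, so ``more care using $|\tilde v_{x_1}|\le C\delta^2$'' cannot produce $\delta^{3/2}\sqrt{\mathcal G^s}$.

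To see this, use $\tilde h_{x_1}=p(\tilde v)_{x_1}/\sigma^*$ and $p(v)-p(\tilde v)\approx p'(\tilde v)\phi\approx-(\sigma^*)^2\phi$. To leading order the term becomes $-\frac{2M(\sigma+\dot X)}{\delta}\int_\Omega\partial_{x_1}\big(a\,p(\tilde v)_{x_1}\big)\rho\,\phi\,dx$. Now take an instant (say $t=0$, via the choice of data) where $\dot X=0$ and $\phi_t=0$, with $\phi\approx c\tanh(\delta\xi)$ across the layer. The term then has size $\sigma c\delta$. All other contributions to $\ddot X$ are $O(c\delta^2)$, while $\delta^{3/2}\sqrt{\mathcal G^s}\approx c\delta^2$.

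So, for $\sigma$ of order one, the inequality with $\delta^{3}\mathcal G^s$ and $C$ independent of $\delta$ cannot hold in general, and no extra care closes this. The paper reaches $\delta^3$ only because its $G_t$ (the $t$-derivative at fixed $X$) differentiates $\rho$, $v$ and $p(\tilde v)$ but treats the weights as $a^{-X}(x_1)$. In fact they depend on $x_1-\sigma t-X-\beta$, and this $\sigma$-contribution is not contained in $\dot X\,G_X$ either. The term you kept is exactly the one the paper dropped.

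\textbf{Your fallback is the correct conclusion.} What you actually prove is $|\ddot X|^2\le C\big(|\dot X|^2+\delta\|\phi_t\|^2+\delta\,\mathcal G^s\big)$, and this suffices. In Lemma~\ref{L:22}, $\ddot X$ enters only through $\ddot X\,Y_1$, i.e.\ through $\delta^2|\ddot X|^2$. With your bound this produces $\delta^3\mathcal G^s$, which is still integrable in time. The boundedness and $L^2(0,T)$-integrability of $\ddot X$ then follow exactly as you argue.

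\textbf{On \eqref{eq:7}.} Your derivation is the paper's: solve the mass equation for $\phi_t$ and bound term by term. The extra $\mathcal G_1$ you flag actually comes with a factor $\delta^3$, since Lemma~\ref{L:psig} gives $\|(vF)\tilde v_{x_1}\|^2\le C\delta^2(\mathcal G^s+\delta\mathcal G_1+\delta\mathcal G_3+\delta^2\mathcal D)$. The paper silently drops it, as it also does in \eqref{eq:new}. It is not controlled pointwise in time by the listed terms, so carrying it is the right choice, and it is harmless because $\int_0^t\mathcal G_1$ is bounded by Lemma~\ref{lemma5.1}.
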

\begin{proof}
Let the right-hand side of \eqref{X(t)} be $G(X, t)$. Then $\ddot X(t) = \dot X G_X + G_t$. Here, note that this $G_t$ is different from the standard definition of time derivative; this term is a time derivative when $X$ is fixed.

By \cite{KV21}, $G_X$ is uniformly bounded since $G$ is Lipschitz with respect to $X$. $G_t$ can be calculated as
\begin{equation}
\begin{aligned}\nonumber
-\frac{\delta}{M}G_t = \int_{\Omega}&\dfrac{a^{-X}(x_1)}{\sigma_*}\,\rho_t(\tilde h_1)_{x_1}
\Big(p(v)-p(\tilde v)\Big)\,dx
+\int_{\Omega}\dfrac{a^{-X}(x_1)}{\sigma_*}\,\rho(\tilde h_1)_{x_1}
\Big(p(v)-p(\tilde v)\Big)_t\,dx
\\[1mm]
&-\displaystyle \int_{\Omega}
a^{-X}(x_1)\,\rho_t\,p'(\tilde v)\,
(v-\tilde v)\,\tilde v_{x_1}\,dx
-\displaystyle \int_{\Omega}
a^{-X}(x_1)\,\rho\,p'(\tilde v)\,
(v-\tilde v)_t\,\tilde v_{x_1}\,dx
\end{aligned}
\end{equation} 
By the Lipschitz continuity of $p$ and Lemma \ref{lem:shock-est}, the following holds for sufficiently small $\delta > 0$. 
$$\big(p(v) - p\bigl(\tilde v\bigr)\big)_t \le C\big((v - \tilde v)_t + (v - \tilde v)(\tilde v)_t \big), \quad \tilde h_{1x_1} \le C( \tilde v_{x_1} + \tu_{x_1x_1}) \le C\,\tv_{x_1}$$ 
Furthermore, by \textit{a priori} smallness and Lemma \ref{lem:shock-est}, the following uniform estimate holds for $\mathcal{G}_s$
\beq
\begin{aligned}\nonumber
\mathcal{G}^s(t) &= \int_{\Omega}|\tilde v_{x_1}|\big(p(v) - p(\tilde v)\big)^2 \, dx \le \|\tilde v_{x_1}\|_{L^\infty} \|\big(p(v) - p(\tilde v)\big)\|^2 \le C\delta^2 \varepsilon^2
\end{aligned}
\eeq
Using the above facts, we can obtain the following.
\begin{equation}
\begin{aligned}\nonumber
\frac{\delta}{M}|G_t| \le & C \int_{\Omega} a^{-X}\Big(|(\tilde h_1)_{x_1} -\tilde v_{x_1}|\big(|v_t||\phi| + |\phi_t| + \phi|\tilde v_t| \big) \Big)\, dx \\[2mm]
\le& C \int_{\Omega} a^{-X}|\tilde v_{x_1}|\bigl(|\phi||\phi_t| + |\phi_t| + |\phi||\tilde v_{x_1}| \bigr) \, dx \\[2mm]
\le & C \big( \|\tilde v_{x_1}^{1/2}\|_{L^\infty} \|\tilde v_{x_1}^{1/2}\phi\| \|\phi_t\| + \|\tilde v_{x_1}\| \|\phi_t\| + \|\tilde v_{x_1}^{3/2}\| \|\tilde v_{x_1}^{1/2} \phi\| \big)\\[2mm]
\le & C\big(\delta \sqrt{\mathcal{G}^s}\|\phi_t\| + \delta^{3/2} \|\phi_t\| + \delta^{5/2} \sqrt{\mathcal{G}^s} \big) 
\le  C\delta \big(\delta^{1/2} \|\phi_t\| + \delta^{3/2} \sqrt{\mathcal{G}^s} \big)
\end{aligned}
\end{equation}
Note that by \eqref{eq:convs}, the time derivative of the shock can be converted into its spatial derivative up to a constant factor.

On the other hand, inequality \eqref{eq:7} is derived by exactly the same manner as \eqref{eq:new}. Note that the mass equation can be written as follows.
$$ \partial_t \phi = v \operatorname{div}_x \psi - \mathbf{u} \cdot \nabla_x \phi - \dot X (\tilde v)_{x_1} - (vF)(\tilde v)_{x_1}$$
Multiplying $\phi_t$ on the both sides of the above equation and using the Young's inequality, we obtain the desired estimate.
\end{proof}

To control the third-order spatial derivatives of \(\psi\), we first estimate its mixed space-time derivatives as follows.
\begin{lemma}Under the \textit{a priori} assumption, there exists a constant \label{L:22}
\(C>0\) independent of \(\varepsilon\), \(\delta\), \(\chi\) and \(T\), such that
for all \(t \in [0,T]\), it holds
\begin{equation}
\begin{aligned}\nonumber
&\sup_{0 \le t \le T} \|\partial_{t} (\phi, \psi)(t) \|^2 + \int_0^t \bigl( \|\nabla_x \psi_{t} \|^2 + \|\phi_{t}\big|_{x_1=0}\|_{L^2(\mathbb{T}^2)}^2 \bigr) \,ds \\
&\qquad \le C \|\partial_{t} (\phi, \psi)(0) \|^2 + C \delta^2 e^{-C\delta \beta} + C\delta^2\int_0^t |\dot X(s) |^2 \,ds \\
&\qquad \quad+ C(\varepsilon + \delta^2) \int_0^t \Bigl( \|\phi_{x_1}\big|_{x_1=0}\|_{L^2(\mathbb{T}^2)}^2  + \mathcal{G}_s + \mathcal{G}_3 + \mathcal{D}(s) + \|\phi_t\|^2 
+\|\nabla_x\psi\|_{H^2}^2 + \|\nabla_x^2 \phi\|^2 \Bigr) \,ds
\end{aligned}
\end{equation}
\end{lemma}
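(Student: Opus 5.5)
We will mimic the proof of Lemma~\ref{L:bdesti}, with $\partial_z$ replaced by $\partial_t$. Differentiating \eqref{eq:peq} in $t$ gives
\begin{equation*}
\begin{aligned}
&\partial_t\phi_t+\mathbf u\cdot\nabla_x\phi_t=v\operatorname{div}_x\psi_t+R_t,\\
&\partial_t\psi_t+\mathbf u\nabla_x\psi_t+vp'(v)\nabla_x\phi_t=v\big(\mu\Delta_x\psi_t+(\mu+\lambda)\nabla_x\operatorname{div}_x\psi_t\big)+T_t.
\end{aligned}
\end{equation*}
The source terms $R_t,T_t$ contain the following pieces:
\begin{itemize}
\item the quadratic commutators $v_t\operatorname{div}_x\psi$, $\mathbf u_t\cdot\nabla_x\phi$ and $(vp'(v))_t\nabla_x\phi$;
\item the profile terms $(vF)_t\tilde v_{x_1}$, $(vS)_t\nabla_x\tilde v$, $(vF)\tilde v_{x_1t}$ and $(vF)_t\tilde u_{x_1}$;
\item the shift terms $\ddot X\tilde v_{x_1}$, $\dot X\tilde v_{x_1t}$ and $\ddot X\tilde{\mathbf u}_{x_1}$.
\end{itemize}
We multiply the first equation by $-p'(v)\phi_t$ and the second by $\psi_t$, add, and integrate over $\Omega$. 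Using the vector identities of Lemma~\ref{vectori} and mass conservation exactly as in \eqref{eq:ref}, this gives $\frac{d}{dt}\int\frac12(-p'(v)|\phi_t|^2+|\psi_t|^2)$ plus the viscous dissipation $\int v(\mu|\nabla_x\psi_t|^2+(\mu+\lambda)|\operatorname{div}_x\psi_t|^2)$. It also gives the outflow boundary term $-u_-\int_{\mathbb T^2}(-\frac{p'(v)}2|\phi_t|^2+\frac12|\psi_t|^2)|_{x_1=0}$, which has a good sign since $u_-<0$; this produces $\|\phi_t|_{x_1=0}\|^2_{L^2(\mathbb T^2)}$ on the left.

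The interior error terms are handled as $J_1$--$J_4$ in Lemma~\ref{L:bdesti}. We split $\operatorname{div}_x\mathbf u=\operatorname{div}_x\psi+\tilde u_{x_1}$, and use Lemma~\ref{GNIsevera} with the a priori smallness to bound the quadratic terms by $C\varepsilon(\|\partial_t(\phi,\psi)\|_{H^1}^2+\|\nabla_x\psi\|_{H^2}^2+\|\nabla_x^2\phi\|^2)$. Here $\|\psi_t\|^2\lesssim\|\nabla_x\psi_t\|^2$ plus exponentially small boundary data is not needed, since $\|\psi_t\|^2$ is already in Lemma~\ref{L:psiH2}. The profile terms are treated with Lemma~\ref{L: estiF}(2),(4), Lemma~\ref{L:psig}, and the conversion $\tilde v_t=-(\sigma+\dot X)\tilde v_{x_1}$; this gives $C\delta^2(\mathcal G^s+\mathcal G_3+\mathcal D+\|\phi_t\|^2)$. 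For the $\ddot X$ terms we use Lemma~\ref{L:21}:
\[
|\ddot X|\,\|\tilde v_{x_1}\|\,\|(\phi_t,\psi_t)\|\le C\delta^{3/2}\big(|\ddot X|^2+\|(\phi_t,\psi_t)\|^2\big),
\]
and then $|\ddot X|^2\le C(|\dot X|^2+\delta\|\phi_t\|^2+\delta^3\mathcal G^s)$. This yields the $C\delta^2\int|\dot X|^2$ contribution in the statement, after absorbing powers of $\delta$.

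The main obstacle is the boundary terms left by the viscous and pressure fluxes:
\[
\int_{\mathbb T^2}v\big(\mu\psi_t\cdot\partial_{x_1}\psi_t+(\mu+\lambda)\operatorname{div}_x\psi_t\,\psi_{1t}-p'(v)\phi_t\psi_{1t}\big)\big|_{x_1=0}\,dx'.
\]
Unlike in Lemma~\ref{L:bdesti}, $\psi_t$ does not vanish at $x_1=0$. However, the boundary velocity is time independent, so $\psi_t|_{x_1=0}=(\sigma+\dot X)\tilde{\mathbf u}_{x_1}|_{x_1=0}$, which is $O(\delta^2e^{-C\delta t-C\delta\beta})$ by \eqref{eq:convs}. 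Only the first component survives, and tangential derivatives of $\psi_t$ vanish on the boundary.

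The pressure piece is bounded by $C\delta^2e^{-C\delta(t+\beta)}\|\phi_t|_{x_1=0}\|_{L^2(\mathbb T^2)}$ and absorbed into the good boundary term. For the piece $\psi_{1t}\partial_{x_1}\psi_{1t}|_{x_1=0}$ we do not use a trace of $\partial_{x_1}\psi_{1t}$, which would need $\nabla^2\psi_t$. Instead we use the boundary value of the continuity equation: at $x_1=0$,
\[
\partial_{x_1}\psi_1=\rho(\phi_t+u_-\phi_{x_1}-\dot X\tilde v_{x_1}+vF\tilde v_{x_1})-\operatorname{div}_{x'}\psi',
\]
with $\operatorname{div}_{x'}\psi'=0$ there. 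Differentiating in $t$ expresses $\partial_{x_1}\psi_{1t}|_{x_1=0}$ through $\phi_{tt}$, $\phi_{x_1t}$ and $\ddot X\tilde v_{x_1}$. The first two still lie one level too high, so we integrate by parts in time on the boundary:
\[
\int_0^t\!\!\int_{\mathbb T^2}\psi_{1t}\,\partial_t(\cdots)
=\Big[\int_{\mathbb T^2}\psi_{1t}(\cdots)\Big]_0^t-\int_0^t\!\!\int_{\mathbb T^2}\psi_{1tt}(\cdots).
\]
Here $\psi_{1tt}|_{x_1=0}$ is again explicit, of size $C(\delta^3+\delta^2|\ddot X|)e^{-C\delta(s+\beta)}$, and $(\cdots)$ involves $\phi_t$, $\phi_{x_1}$ and $\partial_{x_1}\psi_1$ traces. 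These traces are controlled by $\|\phi_t|_{x_1=0}\|$, $\|\phi_{x_1}|_{x_1=0}\|$, and, via \eqref{eq:treatbd}, by $\|\nabla_x\psi\|_{H^1}$. Together with $\int_0^\infty e^{-C\delta s}\,ds\lesssim\delta^{-1}$, this yields the terms $C\delta^2e^{-C\delta\beta}$ and $C(\varepsilon+\delta^2)\int\|\phi_{x_1}|_{x_1=0}\|^2$. We expect this boundary bookkeeping, in particular keeping every trace at one level below the top order, to be the delicate part. Integrating in time and collecting terms then gives the lemma.
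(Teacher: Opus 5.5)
Your proposal is correct and follows essentially the same route as the paper. The shared steps are the energy estimate for the $t$-differentiated system with multipliers $-p'(v)\phi_t$ and $\psi_t$, the outflow sign $u_-<0$ producing the boundary dissipation $\|\phi_t|_{x_1=0}\|_{L^2(\mathbb{T}^2)}^2$, Lemma~\ref{L:21} for the $\ddot X$ terms, and an integration by parts in time of the viscous boundary term $\int_{\mathbb{T}^2} v\,\psi_{1t}\,\partial_{x_1}\psi_{1t}|_{x_1=0}\,dx'$ using the explicit, exponentially small boundary values of $\psi_t$ and $\psi_{tt}$. Your detour through the boundary continuity equation is equivalent to the paper's direct identity $\psi_{1t}\,\partial_{x_1}\psi_{1t}=\partial_t(\psi_{1t}\psi_{1x_1})-\psi_{1tt}\psi_{1x_1}$ (since $\partial_t$ commutes with the trace on $x_1=0$); it merely trades the trace of $\psi_{1x_1}$, which the paper controls by \eqref{eq:treatbd}, for traces of $\phi_t$ and $\phi_{x_1}$. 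The only point to tighten is that the quadratic interior terms should be bounded without $\|\nabla_x\phi_t\|^2$, since that quantity appears on neither side of the statement; use e.g.\ $\|\operatorname{div}_x\psi\|_{L^\infty}\|\phi_t\|^2$ and $L^3$--$L^6$ splittings that put the $H^1$ norm on $\psi_t$.
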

\begin{proof}
Note that the following relation holds between the time and spatial derivatives of the shock.
$$\tilde v_t = \partial_t \big(\tilde v(x_1 - \sigma t - X - \beta)\big) = -(\sigma + \dot X) \tilde v_{x_1}$$
We get the following equation by differentiating \eqref{eq:peq} with respect to $t$. 
\begin{equation}
\left\{
\begin{aligned}
&\phi_{tt} +\mathbf u\cdot\nabla\phi_t \label{eq:8}
=
v\,\operatorname{div}\psi_t + \ddot X \tilde v_{x_1} -(\sigma + \dot X) \dot X \tilde v_{x_1x_1} + R
\\[1mm]
&\psi_{tt} +\mathbf{u}\,\nabla\psi_t + vp'(v)\nabla\phi_t 
= v\big(\mu\Delta\psi_t+(\mu+\lambda)\nabla\operatorname{div}\psi_t\big) + \ddot X \mathbf{\tilde u}_{x_1} - (\sigma + \dot X) \dot X \mathbf{\tilde u}_{x_1x_1} + T
\end{aligned}
\right.
\end{equation}
The terms $R$ and $T$ in this lemma are defined as follows. 
\begin{align}
R
&= v_t\,\operatorname{div}\psi - \mathbf{u}_t\cdot\nabla_x\phi
- (vF)_t\,(\tilde v)_{x_1}
+ (vF)(\sigma+\dot X)\,(\tilde v)_{x_1x_1} \nonumber \\[1mm]
T
&= \mathbf{u}_t\,\nabla_x\psi - \big(vp'(v)\big)_t\,\nabla_x\phi 
-(vS)_t\,\nabla_x \tilde v + v_t\big(\mu\Delta_x \psi + (\mu+\lambda)\nabla_x \operatorname{div}_x \psi\big)\\
&\qquad +(\sigma+\dot X)(vS)\,\nabla_x (\tilde v)_{x_1}
- (vF)_t\,(\mathbf{\tilde u})_{x_1}
+ (vF)(\sigma+\dot X)\,(\mathbf{\tilde u})_{x_1x_1} \nonumber
\end{align}
By Lemma \ref{L:21}, the dynamical shift is in $H^{2}$; hence, the above computation is well defined.
In the same way, multiplying $\eqref{eq:8}_1$ by $-p'(v)\phi_t$, $\eqref{eq:8}_2$ by $\psi_t$, and adding them, we get the below equation.
\begin{align}
&\frac{d}{dt}
\int_{\Omega}\frac12
\Bigl(|\psi_t|^2-p'(v)\phi_t^2\Bigr)\,dx 
-u_-\int_{\mathbb{T}^2}
\frac12\Bigl(|\psi_t|^2-p'(v_-)\phi_t^2\Bigr)\Big|_{x_1=0}\,dx'\quad
\nonumber\\
&\qquad +\int_{\Omega}v
\Bigl(\mu|\nabla_x\psi_t|^2+(\mu+\lambda)|\operatorname{div}_x\psi_t|^2\Bigr)\,dx
=\ddot X\,Y_1+\dot X\,Y_2+J_{\mathrm{bd}}+\sum_{k=1}^{4}J_k .\nonumber
\end{align}
where symbols are defined as; 
\beq
\begin{aligned}
Y_1 &:= -\int_{\Omega} \Bigl(p'(v)\,(\tilde v)_{x_1}\,\phi_t-(\tilde{\mathbf u})_{x_1}\cdot\psi_t\Bigr) \, dx \qquad 
Y_2 := \int_{\Omega} \Big( (\sigma+\dot X)\bigl(p'(v)\,\phi_t\,(\tilde v)_{x_1x_1}+\psi_t (\tilde{\mathbf u})_{x_1x_1}\bigr) \, dx \nonumber\\[0.5ex]
J_{\mathrm{bd}} &:= -\int_{\mathbb{T}^2} \big((\mu + \lambda)v (\operatorname{div}_x \psi_t) \psi_{1t} + \mu v\,\psi_t\cdot \nabla_x \psi_{1t} - p'(v) \phi_t \psi_{1t}\big) \Big|_{x_1 = 0}  \nonumber\\[0.5ex]
J_1 &:= \frac12\int_{\Omega} \operatorname{div}_x \mathbf{u}\Big(-vp''(v)|\phi_t|^2 + p'(v)|\phi_t|^2 - |\psi_t|^2 \Big) \, dx \\[0.5ex]
J_2 &:=  \int_{\Omega}\Big(\phi_t \nabla_x \big(vp'(v)\big) \cdot \psi_t + \nabla_x v \cdot \big((\mu + \lambda) \operatorname{div}_x \psi_t \psi_t + \mu \psi_t \nabla_x \psi_t \big) \Big) \, dx \\[0.5ex]
J_3 &:= \int_{\Omega} -p'(v)R \phi_t \, dx \qquad
J_4 := \int_{\Omega} T\cdot \psi_t \, dx
\end{aligned}
\eeq
We omit the derivation since it is identical to that of \eqref{eq:ref}, merely replacing the $z$-derivative with the $t$-derivative.

Using Young's inequality and the Lemma~\ref{lem:shock-est}, we obtain 
\begin{equation}
\begin{aligned}\nonumber
\big|\ddot X Y_1 \big| &\le C|\ddot X(t)|\int_{\Omega} \Bigl(\big|p'(v)\,(\tilde v)_{x_1}\,\phi_t\big| + \big| (\tilde{\mathbf u})_{x_1}\cdot\psi_t \big| \Bigr) \, dx 
\le C\delta^2\Big(|\ddot X(t) |^2 + \|\phi_t\|^2 + \|\psi_t\|^2 \Big)
\end{aligned}
\end{equation}
To control $\dot X Y_2$, we use same manner as in the above with the fact $|(\sigma + \dot X)| \le C$.
\beq
\begin{aligned}
\big|\dot X Y_2 \big| &\le C\,|\dot X(t)|\,\int_{\Omega} \Bigl(\big|p'(v)\,(\tilde v)_{x_1x_1}\,\phi_t\big| + \big| (\tilde{\mathbf u})_{x_1x_1}\cdot\psi_t \big|\Big)\,dx \nonumber\\[1mm]
& \le C\,|\dot X(t)|\,\int_{\Omega} \|\tilde v_{x_1x_1}\| \bigl(\|\phi_t\big\| + \| \psi_t \|\big) \, dx
 \nonumber\\[1mm]
& \le C(\delta^{5/2})\Big(|\dot X|^2 + \|\phi_t\|^2 + \|\psi_t\|^2 \Big) 
\end{aligned}
\eeq
To control $J_1$, we use the same argument as in \eqref{eq:div}, together with the fact $\|\phi_t\|,\,\|\psi_t\|\le C\e$ which can be derived from the \textit{a priori} assumption and Lemmas 6.6, 6.7. 
\begin{equation}
\begin{aligned}
|J_1| \le C\big(\|\operatorname{div}_x \psi\|_{L^{\infty}} + \delta^2 \big) \big(\|\phi_t\|^2 + \|\psi_t\|^2 \big)
 \le C(\varepsilon + \delta) \big( \|\phi_t \|^2 + \|\psi_t\|^2 +\|\nabla_x^2 \operatorname{div}_x\psi\|^2 \big) \nonumber
\end{aligned}
\end{equation}
We obtain the estimate for $J_2$ in the same manner as in \eqref{eq:addi}.
$$|J_2| \le C(\varepsilon + \delta^2)\big(\|\phi_t\|^2 + \|\psi_t\|_{H^1}^2 \big)$$
To estimate $J_3$ and $J_4$, we first estimate the $L^2$-norms of $R$ and $T$. To do this, we first note that the following fact arises from the barotropic condition.
$$\big(vp'(v)\big)_t = p'(v) v_t + vp''(v) v_t  = \gamma^2 v^{-\gamma -1} v_t.$$
Using this, we rewrite $R$ and $T$ as follows.
\beq
\begin{aligned}
R
&= \phi_t\,\operatorname{div}_x\psi - {\psi}_t\cdot\nabla_x\phi + (\sigma + \dot X)(\operatorname{div}_x\psi)\tilde v_{x_1} \\
&\hspace{10mm}- (\sigma + \dot X) \nabla_x\phi \cdot \mathbf{\tilde u}_{x_1} 
 - (vF)_t\,(\tilde v)_{x_1}
+ (vF)(\sigma+\dot X)\,(\tilde v)_{x_1x_1}  \nonumber \\[2mm]
T
&=\psi_t\,\nabla_x\psi - (\sigma+\dot X)\mathbf \tu_{x_1} \nabla_x \psi - (\gamma^2v^{-\gamma -1}) \phi_t \,\nabla_x\phi - (\sigma + \dot X)(\gamma^2v^{-\gamma -1}) \,\nabla_x\phi \, \tilde v_{x_1} \\[2mm]
&\hspace{10mm}
+\phi_t\big(\mu\Delta_x \psi+(\mu+\lambda) \nabla_x\operatorname{div}_x \psi \big) -(\sigma+\dot X)\tv_{x_1} \big(\mu \Delta_x \psi +(\mu+\lambda)\nabla_x \operatorname{div}_x\psi \big) \\[2mm]
&\hspace{10mm}
-(vS)_t\,\nabla_x \tilde v
+(\sigma+\dot X)(vS)\,\nabla_x (\tilde v)_{x_1} 
- (vF)_t\,(\mathbf{\tilde u})_{x_1}
+ (vF)(\sigma+\dot X)\,(\mathbf{\tilde u})_{x_1x_1}  \nonumber
\end{aligned}
\eeq
 Using Lemmas \ref{lem:shock-est}, \ref{GNIsevera}, \ref{L: estiF}, \ref{L:psig}, and the fact that $|\sigma + \dot X| \le C$, $(vS) \sim \big(p(v) - p(\tilde v)\big)$, we obtain the following $L^2$ bounds.
\beq
\begin{aligned} \nonumber
\|R\| &\le  C\big(\|\operatorname{div}_x \psi\|_{H^1} \|\phi_t\|_{H^1} + \|\psi_t \|_{H^1} \|\nabla_x \phi\|_{H^1} + \|\tilde v_{x_1}\|_{L^\infty} \|\operatorname{div}_x \psi \| \\[1mm]
&\qquad + \|\mathbf{\tilde u}_{x_1}\|_{L^\infty} \|\nabla_x \phi\| + \|\tilde v\|_{L^\infty} \|(vF)_t \| + \|(vF) \tilde v_{x_1x_1}\|  \big)\\[2mm]
&\le C(\varepsilon + \delta^2) \big(\|\phi_t\|_{H^1} + \|\psi_t\|_{H^1} + \|\operatorname{div}_x \psi\| + \|\nabla_x \phi\| + (\mathcal{G}^s + \delta \mathcal{G}_3 + \delta^2\mathcal{D})^{1/2} \big)
\end{aligned}
\eeq
and,
\beq
\begin{aligned} \nonumber
\|T\|
&\le C \big(\|\nabla_x \psi\|_{H^1} \|\psi_t\|_{H^1} + \|\mathbf \tu_{x_1} \|_{L^\infty} \|\nabla_x \psi\| + \|\phi_t\|_{H^1} \|\nabla_x \phi\|_{H^1} + \|\tilde v_{x_1}\|_{L^\infty} \|\nabla_x \phi\|\\[1mm]
&\qquad + \|\tv_{x_1}\|_{L^\infty} \|\nabla_x^2 \psi\| + \|\nabla_x \tilde v\|_{L^\infty} \|(vS)_t \| + \|(vS) \nabla_x \tilde v_{x_1}\| + \|\mathbf{\tilde u}_{x_1} \|_{L^\infty} \|(vF)_t\| + \|(vF) \mathbf{\tilde u}_{x_1x_1}\| \big) \\[1mm]
&\qquad +\phi_t\big(\mu\Delta_x \psi + (\mu+\lambda)\nabla_x \operatorname{div}_x \psi\big)\\[3mm]
&\le C(\varepsilon + \delta^2) \big(\|\psi_t\|_{H^1} + \|\phi_t\| +\|\nabla_x \phi\|_{H^1} + \|(vS)_t \|  + (\mathcal{G}^s + \delta \mathcal{G}_3 + \delta^2 \mathcal{D})^{1/2} \big)\\[1mm]
&\qquad +\phi_t\big(\mu\Delta_x \psi + (\mu+\lambda)\nabla_x \operatorname{div}_x \psi\big)
\end{aligned}
\eeq
Since $(vS)_t = \phi_t S + (\sigma + \dot X) \tilde v_{x_1}S + v S_t$, using the relation $S \sim \big(p(v) - p(\tilde v)\big)$ which arise from Lipschitz continuity of $p$ and $p'$, we obtain the following bound.
$$\|(vS)_t\| \le \|\phi_t\| + \mathcal{D} + \|\psi_t\|.$$
Therefore, combining all of the above, we can estimate $J_3$ and $J_4$ as follows.
\beq
\begin{aligned} \nonumber
|J_3| 
&\le C(\varepsilon + \delta^2) \big(\|\phi_t\|^2 + \|\nabla_x^2 \operatorname{div} \psi\|^2 + \|\psi_t\|_{H^1}^2 + \|\operatorname{div}_x \psi\|^2 + \mathcal{D} + (\mathcal{G}^s + \delta \mathcal{G}_3) \big) \\[3mm]
|J_4| &\le C(\varepsilon + \delta^2) \|\psi_t\|\big(\|\psi_t\|_{H^1} + \|\phi_t\| +\|\nabla_x \phi\|_{H^1} + \|(vS)_t \|  + (\mathcal{G}^s + \delta \mathcal{G}_3 + \delta^2 \mathcal{D})^{1/2} \big) \\[1mm]
&\qquad +\, C\,\|\phi_t\| \|\psi_t\|_{L^3}\|\nabla_x^2 \psi\|_{L^6} \\[2mm]
&\le C(\varepsilon + \delta^2) \big(\|\psi_t\|_{H^1}^2 + \|\phi_t\|^2 + \|\nabla_x^2 \psi\|_{H^1}^2 +\|\nabla_x \phi\|_{H^1}^2 + (\mathcal{G}^s + \delta \mathcal{G}_3) \big)
\end{aligned}
\eeq
Finally, we should control the bad terms arising from the presence of the boundary. Before calculating, note that
\begin{align}
J_{\mathrm{bd}} &= -(2\mu+\lambda) \int_{\mathbb{T}^2} \bigl(v\,\psi_{1t}\,\psi_{1x_1t}\bigr) \Big|_{x_1 = 0}\, dx'
+ \int_{\mathbb{T}^2} \bigl(p'(v)\,\psi_t\phi_t\bigr) \Big|_{x_1 = 0} \, dx' := J_{11} + J_{12} \nonumber
\end{align}
By Young's inequality, together with \eqref{eq:convs}, we can control $J_{12}$ as follows.
$$|J_{12}| \le C\|\psi_t \big|_{x_1 = 0}\|_{L^\infty}\big(\|\mathbf{1}_{\mathbb{T}^2}\|_{L^2(\mathbb{T}^2)}^2 + \|\phi_t \big|_{x_1 =0 }\|_{L^2(\mathbb{T}^2)}^2\big) \le C\delta^2 e^{-C\delta t - C\delta \beta} + C\delta^2 \|\phi_t\big|_{x_1 = 0}\|_{L^2(\mathbb{T}^2)}^2$$  
To estimate \(J_{11}\), we use the fact that \(z\)- and \(t\)-derivatives are tangential to the boundary \(\{x_1=0\}\). Hence, time derivatives can be transferred by integration by parts directly on the boundary.
\begin{align*}
\nonumber\int_0^t J_{11}\, dt & \le C \int_{\mathbb{T}^2} \Big(|(\psi_{1x_1}\, \psi_{1t})(t)\big|_{x_1 = 0}| + |(\psi_{1x_1}\, \psi_{1t})(0) \big|_{x_1 = 0}|\Big) \, dx' + C \int^t_0 \int_{\mathbb{T}^2} (\psi_{1x_1}\, \psi_{1tt}) \big|_{x_1 = 0} \, dx' dt \\[2mm]
& \quad + \,C \int_0^t\,\int_\mathbb {T^2} \big( \phi_t \psi_{1x_1} \psi_{1t} -(\sigma + \dot X)\, \tv_{x_1} \psi_{1x_1} \psi_{1t} \big)\, dx'\\[2mm]
\nonumber&\le C\int_{\mathbb{T}^2} \delta^2 e^{-C \delta \beta} \big(|\psi_{1x_1} \big|_{x_1 = 0}(t)| + |\psi_{1x_1}(0) \big|_{x_1 = 0}| \big)\, dx' \\[1mm]
\nonumber &\quad + C\int_0^t \|\psi_{1tt}\big|_{x_1 = 0}\|_{L^\infty} \big(\|\mathbf{1}_{\mathbb T^2}\|_{L^2(\mathbb{T}^2)}^2 + \|\psi_{1x_1} \big|_{x_1 = 0}\|_{L^2(\mathbb{T}^2)}^2 \big)\, dt \\[1mm]
\nonumber &\quad + C\,\int_0^t \|\phi_t\big|_{x_1 = 0}\|_{L^2(\mathbb T^2)}\|\psi_{1x_1}\big|_{x_1 = 0}\|_{L^2(\mathbb T^2)} \|\psi_{1t}\big|_{x_1 = 0}\|_{L^\infty} dt\\[1mm]
\nonumber &\quad +\,C\,\int_0^t \|\tv_{x_1} \psi_{1t} \big|_{x_1 = 0}\|_{L^\infty}\big(\|\mathbf{1}_{\mathbb T^2}\|_{L^2(\mathbb T^2)}^2 + \|\psi_{1x_1}\big|_{x_1 = 0}\|_{L^2(\mathbb{T}^2)}^2\big)\,dt \\[2mm]
\nonumber & \le C \delta^2 e^{-C\delta \beta}\big(1 + \|\psi_{1x_1} \big|_{x_1 =0}(t)\|_{L^2(\mathbb{T}^2)}^2 + \|\psi_{1x_1} \big|_{x_1 =0}(0)\|_{L^2(\mathbb{T}^2)}^2 \big) \\[1mm]
\nonumber&\quad + C\delta^3 e^{-C\delta \beta} +C\delta^2 \int_0^t \|\phi_t\big|_{x_1 = 0}\|_{L^2(\mathbb T^2)}^2 \, dt + C\delta^4 \int_0^t \|\psi_{1x_1} \big|_{x_1 = 0}\|_{L^2(\mathbb{T}^2)}^2 \, dt \\[2mm]
\nonumber & \le C\delta^2 e^{-C\delta \beta} + C\varepsilon^2 \delta^2 e^{-C\delta\beta} + C\delta^2 \int_0^t\big(\|\phi_t\big|_{x_1 = 0}\|_{L^2(\mathbb T^2)}^2 + \|\psi_{1x_1} \big|_{x_1 = 0}\|_{L^2(\mathbb{T}^2)}^2\big) \, dt
\end{align*}
In the last line, we used the relation $$\big(\psi_{1x_1}(t) \big|_{x_1 = 0}\big)^2 = -\int_{\mathbb{R}^+} \psi_{1x_1} \psi_{1x_1x_1}(t) \, dx_1 \le \int_{\mathbb{R}^+} \big( \psi_{1x_1}^2(t) + \psi_{1x_1x_1}^2(t) \big) \, dx_1$$ together with the \textit{a priori} assumption. Also, note that the boundary term $\psi_{1x_1} \big|_{x_1 = 0}$ appearing in the last line can be treated exactly as in \eqref{eq:treatbd}.
\end{proof}

\subsection{$H^2$ estimate of $v - \tilde v$}
We begin the \(H^2\)-level estimate with the \(v\)-perturbation. By the outflow condition, the boundary contribution from the transport term is dissipative. We therefore choose the test function so that the boundary term generated by diffusion is canceled.

\begin{lemma}Under the \textit{a priori} assumption, there exists a constant
\(C>0\) independent of \(\varepsilon\), \(\delta\), \(\chi\) and \(T\), such that
for all \(t \in [0,T]\), it holds \label{L:23}
\begin{equation}
\begin{aligned}
&\sup_{0 \le t \le T}\big(\|\phi\|_{H^2}^2 + \|\psi\|_{H^1}^2 + \|\partial_{z}^2 \psi\|^2\big) \\[1mm]
&\quad + \int_0^t \bigg[
\|\nabla_x^2 \phi\|^2
+ \|\nabla_x \psi\|_{H^1}^2
+ \|\nabla_x \psi_{zz}\|^2
+ \|\psi'_{x_1x_1z}\|^2 
+ \mathcal{G}_3(s) + \mathcal{G}^s(s)\\
&\qquad \qquad  + \mathcal{D}(s) + \delta^2 |\dot X(s)|^2 
+ \|\nabla_x^2 \phi \big|_{x_1 = 0}\|^2
+ \|\nabla_x \phi \big|_{x_1 = 0} \|^2
+ \|\psi_{1x_1} \big|_{x_1 = 0} \|^2
\bigg]\,ds \nonumber\\[2mm]
&\quad \le C\, \big(\|\phi_0\|_{H^2}^2 + \|\psi_0\|_{H^1}^2 + \|\partial_{z}^2 \psi_0\|^2\big) \\[1mm]
&\qquad + C(\varepsilon + \delta^2 + \chi)\int_0^t
\|\nabla_x^3 \psi\|^2 \, ds  
+ C\int_0^t 
 \|\nabla_x \psi_t\|^2 \,ds + Ce^{-C\delta \beta}
\end{aligned}
\end{equation}
\end{lemma}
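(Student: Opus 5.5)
The plan is to obtain the $H^2$ estimate for $\phi$ from the mass equation $\eqref{eq:peq}_1$, differentiated twice, tested against a combination of $\nabla_x^2\phi$ and the second derivatives of the momentum equation. The guiding structure is the "effective viscous flux" in the normal direction. Writing the first component of $\eqref{eq:peq}_2$ as
\[
(2\mu+\lambda)v\,\partial_{x_1}\operatorname{div}_x\psi = \psi_{1t}+\mathbf{u}\cdot\nabla_x\psi_1+vp'(v)\phi_{x_1}+\dots+\mu v(\text{curl terms}),
\]
and using $\operatorname{div}_x\psi = v^{-1}(\phi_t+\mathbf{u}\cdot\nabla_x\phi+\dots)$, we obtain a damped transport equation for $\phi_{x_1}$:
\[
\phi_{x_1t}+\mathbf{u}\cdot\nabla_x\phi_{x_1}+\frac{-vp'(v)}{2\mu+\lambda}\,\phi_{x_1}=-\frac{\psi_{1t}}{2\mu+\lambda}+\text{(tangential, curl and lower-order terms)}.
\]
Since $-vp'(v)>0$, this gives damping of $\phi_{x_1}$. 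Applying $\nabla_x$ once more gives the same structure for $\nabla_x\phi_{x_1}$. The source then contains $\nabla_x\psi_{1t}$, which is why $\int_0^t\|\nabla_x\psi_t\|^2$ appears on the right-hand side. It also contains $\nabla_x^3\psi$ tangential and curl pieces, which appear with the small factor $(\varepsilon+\delta^2+\chi)$ after Young.

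\textbf{Interior estimates.} First, for the tangential second derivatives $\phi_{zz}$ and $\psi_{zz}$, I would repeat the argument of Lemma~\ref{L:bdesti} with $\partial_z^2$ in place of $\partial_z$. I multiply by $-p'(v)\phi_{zz}$ and by $\psi_{zz}$. Because $\psi_{zz}|_{x_1=0}=0$, the viscous boundary term vanishes. The transport boundary term has the favorable sign $-u_->0$, which yields the traces $\|\nabla_{x'}^2\phi|_{x_1=0}\|^2$ and gives $\|\nabla_x\psi_{zz}\|^2$ as dissipation. Applying the same Minkowski argument as in \eqref{eq:19} to the $z$-differentiated transverse momentum equation then gives $\|\psi'_{x_1x_1z}\|^2$.

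Next, for the normal derivatives $\nabla_x\phi_{x_1}$, I test the damped transport equation above by $\nabla_x\phi_{x_1}$. The term $\mathbf{u}\cdot\nabla_x$ integrates by parts to a boundary term $-\tfrac{u_-}{2}\int_{\mathbb{T}^2}|\nabla_x\phi_{x_1}|^2|_{x_1=0}\,dx'\ge0$. This is dissipative and supplies the trace $\|\nabla_x^2\phi|_{x_1=0}\|^2$, plus $\|\nabla_x\phi|_{x_1=0}\|^2$ from the first-order version. The key point is that no viscous boundary term arises, because the diffusion has been eliminated in favor of $\psi_t$. The damping term gives $\int_0^t\|\nabla_x^2\phi\|^2$. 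The source terms are handled by Young's inequality: $C\|\nabla_x\psi_t\|^2$; $\chi\|\nabla_x^3\psi\|^2$ for the curl and tangential parts; and shock-profile terms of size $\delta^2(\mathcal{G}^s+\dots)$ via Lemmas~\ref{lem:shock-est}, \ref{L: estiF} and~\ref{L:psig}.

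Nonlinear terms such as $\nabla_x^2\phi\,\nabla_x\psi$ are bounded using Lemma~\ref{GNIsevera} together with the a priori smallness, giving $C\varepsilon$ times dissipation. The $\dot X\tilde v_{x_1x_1}$ terms are bounded by $\delta^2|\dot X|^2$.

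\textbf{Assembly.} I then add Lemma~\ref{L:psiH2}, which supplies $\|\psi\|_{H^1}$, $\|\nabla_x\psi\|_{H^1}$, $\mathcal{G}_3$, $\mathcal{G}^s$, $\mathcal{D}$ and $\delta|\dot X|^2$ on the left. The term $\|\psi_{1x_1}|_{x_1=0}\|^2$ is controlled by the trace inequality \eqref{eq:treatbd}. Choosing $\delta$, $\varepsilon$ and $\chi$ small absorbs the remaining terms into the left-hand side.

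\textbf{Main obstacle.} I expect the main difficulty to be the boundary contributions in the $\nabla_x\phi_{x_1}$ estimate, specifically the cross terms coming from the time-derivative source. The transport form avoids viscous traces only if $\psi_{1t}$ is moved to the right without integrating by parts in $x_1$. I would keep it as $\|\nabla_x\psi_t\|\,\|\nabla_x^2\phi\|$ and never transfer derivatives onto it. If a trace of $\psi_t$ does appear, I would bound it by \eqref{eq:convs}, which gives an $e^{-C\delta\beta}$ contribution.
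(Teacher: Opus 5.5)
Your tangential $\partial_z^2$ energy estimate and your damped-transport (normal effective viscous flux) estimates for $\phi_{x_1z}$ and $\phi_{x_1x_1}$ are the same devices the paper uses. Writing the $\phi_{x_1x_1}$ step with the full $(2\mu+\lambda)\partial_{x_1}\operatorname{div}_x\psi$, so that only tangential/curl third derivatives remain in the source, is the right way to organize it.

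There is, however, a genuine gap: nothing in your scheme produces the dissipation $\int_0^t\|\nabla_{x'}^2\phi\|^2\,ds$ of the purely tangential second derivatives.
\begin{itemize}
\item The damping in the transport equation for $\nabla_x\phi_{x_1}$ gives only $\int_0^t\|\nabla_x\phi_{x_1}\|^2$, not $\int_0^t\|\nabla_x^2\phi\|^2$ as you claim.
\item The $\partial_z^2$ energy identity gives $\sup_t\|\phi_{zz}\|^2$ and the boundary trace of $\phi_{zz}$, but no time-integrated $\|\phi_{zz}\|^2$.
\end{itemize}
This quantity is needed beyond the stated left-hand side. The $\partial_z^2$ energy estimate and the transport estimates generate errors $C(\varepsilon+\delta^2)\|\phi_{zz}\|^2$, and only $\int_0^t\|\phi_{zz}\|^2$ can absorb them.

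Your ``Minkowski argument as in \eqref{eq:19}'' does not supply it. The $z$-differentiated transverse momentum equation has the form
\[
vp'(v)\,\phi_{zz}-\mu v\,\psi'_{x_1x_1z}=R ,
\]
with $R$ controlled by $\psi'_{zt}$, $\nabla_x\psi_{zz}$ and lower-order terms. It therefore contains two unknowns with $O(1)$ coefficients, and bounding $\|\psi'_{x_1x_1z}\|$ this way presupposes a bound on $\int_0^t\|\phi_{zz}\|^2$. In \eqref{eq:19} the corresponding pressure term was covered by $\mathcal{D}$ from the zeroth-order estimate. One derivative higher there is no such input, so the argument is circular.

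The missing step, which the paper supplies, is to square this identity and estimate the cross term by integrating by parts in $x_1$ and then in $z$ (up to commutators from the coefficient $v^2p'(v)$):
\[
\int_\Omega\phi_{zz}\,\psi'_{x_1x_1z}\,dx=-\int_{\mathbb{T}^2}\phi_{zz}\,\psi'_{x_1z}\Big|_{x_1=0}dx'+\int_\Omega\phi_{x_1z}\,\psi'_{x_1zz}\,dx .
\]
\begin{itemize}
\item The boundary term is controlled by the $\phi_{zz}$-trace, which comes from the favorable outflow sign in the $\partial_z^2$ energy estimate, together with \eqref{eq:treatbd} for $\psi'_{x_1z}$.
\item The interior term is controlled by the transport bound on $\phi_{x_1z}$ and the dissipation $\|\nabla_x\psi_{zz}\|$.
\end{itemize}
This yields $\int_0^t(\|\phi_{zz}\|^2+\|\psi'_{x_1x_1z}\|^2)$ at once. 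It also fixes the order of the argument: tangential energy, then $\phi_{x_1z}$, then this step, then $\phi_{x_1x_1}$.

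The order matters because, contrary to your remark, the curl/tangential third-order source terms cannot be given a small factor $\chi$. The damping coefficient $-vp'(v)/(2\mu+\lambda)$ is $O(1)$, so Young's inequality must put the small constant on $\nabla_x\phi_{x_1}$. That leaves $C\|\nabla_x\psi_{zz}\|^2+C\|\psi'_{x_1x_1z}\|^2$ with $O(1)$ constants. For $\phi_{x_1x_1}$, the source $\mu v\,\partial_{x_1}^2\operatorname{div}_{x'}\psi'$ is exactly of type $\psi'_{x_1x_1z}$. This is why both $\|\nabla_x\psi_{zz}\|^2$ and $\|\psi'_{x_1x_1z}\|^2$ appear on the left of the lemma.

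So the real obstacle is this tangential step, not the boundary terms of the transport estimate, which have a good sign.
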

\begin{proof}
\noindent $\bullet$ \textbf{Estimate of $\partial_{z}^2 \phi$ :}
After taking $\partial_{z}^2$ to \eqref{eq:peq}, we get the following system.
\begin{equation}
\left\{
\begin{aligned}
& \partial_t\phi_{zz}
+ \textbf{u} \cdot \nabla_x \phi_{zz} 
= v \operatorname{div}_x \psi_{zz} + R
\\[1mm]
&\partial_t \psi_{zz} \label{eq:212}
+ \textbf{u} \nabla_x \psi_{zz}
+ vp'(v)\nabla_x \phi_{zz} = v \big(\mu \Delta_x \psi_{zz}
+ (\mu + \lambda) \nabla_x \operatorname{div}_x \psi_{zz}\big) + T
\end{aligned}
\right.
\end{equation}
where $R$ and $T$ in this lemma are defined by
\begin{equation}
\left\{
\begin{aligned}\nonumber
R \;:=\;&
2\,\phi_{z}\,\operatorname{div}_x\psi_{z}
+\phi_{zz}\,\operatorname{div}_x\psi
-(vF)_{zz}\,\partial_{x_1}\tilde v
-2\,\psi_{z}\cdot\nabla_x\phi_{z}
-\psi_{zz}\cdot\nabla_x\phi
\\[0.4em]
T \;:=\;&
2\mu\,\phi_{z}\,\Delta_x\psi_{z}
+2(\mu+\lambda)\,\phi_{z}\,\nabla_x\operatorname{div}_x\psi_{z}
+\mu\,\phi_{zz}\,\Delta_x\psi
+(\mu+\lambda)\,\phi_{zz}\,\nabla_x\operatorname{div}_x\psi
\\
&\;
-\bigl(vp'(v)\bigr)_{zz}\,\nabla_x (\tilde v)
-\bigl(vF\bigr)_{zz}\,(\tilde{\mathbf u})_{x_1}
-2\bigl(vp'(v)\bigr)_{z}\,\nabla_x\phi_{z}
-\bigl(vp'(v)\bigr)_{zz}\,\nabla_x\phi
\\
&\;
-2\,\psi_{z}\,\nabla_x\psi_{z}
-\psi_{zz}\,\nabla_x\psi.
\end{aligned}
\right.
\end{equation}
Multiplying $\bigl(-p'(v)\phi_{zz}\bigr)$ to $(\refeq{eq:212})_1$, $(\psi_{zz})$ to $(\refeq{eq:212})_2$, adding the resultant equations and integrating with respect to spatial variables, we can get the following.
\begin{equation}
\begin{aligned} \label{eq:divxzzg}
&\frac12 \frac{d}{dt}
\int_{\Omega}
\Bigl( |\psi_{zz}|^2 - p'(v)\,|\phi_{zz}|^2 \Bigr)\,dx
\;-\;\frac{u_{-}}{2}
\int_{\mathbb{T}^2}
\Bigl(
\bigl|\psi_{zz}\bigr|_{x_1=0}\bigr|^2
- p'(v_-)\,\bigl|\phi_{zz}\bigr|_{x_1=0}\bigr|^2
\Bigr)\,dx'
\\
&\;+\;
\int_{\Omega}
v\Bigl(
\mu\,|\nabla_x\psi_{zz}|^2
+(\mu+\lambda)\,|\operatorname{div}_x\psi_{zz}|^2
\Bigr)\,dx
\;=\;\sum_{j=1}^5 J_j .
\end{aligned}
\end{equation}
where the bad terms are defined by
\begin{align}\nonumber
&J_1 := 
\frac12 \int_{\Omega} \operatorname{div}_x \mathbf{u} \big(-vp''(v)|\phi_{zz}|^2 +  p'(v) |\phi_{zz}|^2 - |\psi_{zz}|^2 \big) \, dx \\[1mm]
\nonumber&J_2 :=  \int_{\Omega} \phi_{zz} \nabla_x \big(vp'(v)\big) \psi_{zz} + \nabla_x v \cdot \big((\mu + \lambda) \operatorname{div}_x \psi_{zz} \psi_{zz} + \mu \psi_{zz} \nabla_x \psi_{zz} \big) \, dx
\\[1mm]
\nonumber&J_3 :=
\int_{\Omega}-p'(v) R \phi_{zz} \, dx, \qquad
J_4 := \int_{\Omega} T \cdot \psi_{zz}\, dx
\\[1mm]
\nonumber& J_5 := - \int_{\mathbb{T}^2} v\big((\mu + \lambda) \operatorname{div}_x \psi_{zz} \psi_{1zz} + \mu\psi_{zz} \cdot \nabla_x \psi_{1zz} - p'(v) \phi_{zz} \psi_{1zz}\big) \Big|_{x_1 = 0} \, dx'.
\end{align}
We omit the derivation since it is exactly the same as before. Additionally, note that $$J_5 = 0$$ holds due to outflow boundary condition.\\
For $J_1$, employing exactly the same method as in \eqref{eq:div}, we obtain the following estimate.
$$|J_{1}|
\le \|\operatorname{div}_{x}\mathbf{u}\|_{L^{\infty}}\bigl(\|\psi_{zz}\|^2 + \|\phi_{zz}\|^2\bigr) \le C(\varepsilon + \delta^2) \bigl(\|\nabla_x^2 \operatorname{div}_x \psi\|^2 + \|\psi_{zz}\|^2 + \|\phi_{zz}\|^2\bigr).$$
For $J_2$, using the same manner as in \eqref{eq:addi}, we can get
\begin{align} \nonumber
|J_2| &\le C(\varepsilon + \delta^2) \big(\|\phi_{zz}\|^2 + \|\psi_{zz}\|_{H^1}^2 \big)
\end{align}
To obtain estimates for $J_3$, we first estimate the $L^2$-norms of $R$. 
The $L^2$-norms of $R$ can be obtained in the same manner as in \eqref{eq:estiRT}
\beq
\begin{aligned}\nonumber
\|R\| 
&\le C \big(\|\operatorname{div}_x \psi_z \|_{H^1}\|\phi_{z}\|_{H^1} + \|\operatorname{div}_x \psi \|_{H^2}\|\phi_{zz}\| \\[1mm]
& \hspace{20mm}+ \|\tilde v_{x_1}\|_{L^\infty} \|(vF)_{zz}\|+ \|\psi_z\|_{H^2} \|\nabla_x \phi_z\|  + \|\psi_{zz}\|_{H^1} \|\nabla_x \phi\|_{H^1} \big) \\[2mm]
&\le C(\varepsilon + \delta^2)\big( \|\operatorname{div}_x \psi_z\|_{H^1} + \|\phi_{zz}\| + \|\nabla_x^2 \operatorname{div}_x \psi \| + \|(vF)_{zz}\|\\[1mm]
&\hspace{40mm}  + \|\nabla_x^2 \phi_z\| + \|\nabla_x \phi_z\| + \|\psi_{zz}\|_{H^1}^2 + \|\nabla_x \phi\|_{H^1}^2 \big)
\end{aligned}
\eeq
Hence, we can obtain the following.
\beq
\begin{aligned} \nonumber
|J_3| &\le C\|R\| \|\phi_{zz}\|
\le C(\varepsilon + \delta^2) \big(\|\operatorname{div}_x \psi_z\|_{H^1} + \|\nabla_x^2 \operatorname{div}_x \psi \| + \|\psi_{zz}\|_{H^1}^2 + \|\nabla_x \phi\|_{H^1}^2 \big)
\end{aligned}
\eeq
To estimate $J_4$, first rewrite $T$ as follows.
\beq
\begin{aligned} \nonumber
T &= 2\mu\,\phi_{z}\,\Delta_x\psi_{z}
+2(\mu+\lambda)\,\phi_{z}\,\nabla_x\operatorname{div}_x\psi_{z}
+\mu\,\phi_{zz}\,\Delta_x\psi
+(\mu+\lambda)\,\phi_{zz}\,\nabla_x\operatorname{div}_x\psi
\\[1mm]
&\;
 -\phi_{zz} S\, \nabla_x \tilde v - 2\phi_z S_z \nabla_x \tilde v - v S_{zz} \nabla_x \tilde v
-\bigl(vF\bigr)_{zz}\,\tilde{\mathbf u}_{x_1}
-2\phi_z S \,\nabla_x\phi_{z} -2v S_z \,\nabla_x\phi_{z} \\[1mm]
&\:
-\phi_{zz} S\,\nabla_x \phi - 2\phi_z S_z \nabla_x \phi - v S_{zz} \nabla_x \phi
-2\,\psi_{z}\,\nabla_x\psi_{z}
-\psi_{zz}\,\nabla_x\psi.
\end{aligned}
\eeq
Using the Lemmas~\ref{GNIsevera}, \ref{L: estiF} and \ref{lem:shock-est}, we obtain the following.
\beq
\begin{aligned}\nonumber
|J_4| &\le C\|\psi_{zz}\|_{H^1} \Big(\|\phi_z\|_{H^1}\|\nabla_x^2 \psi_z \| + \|\nabla_x^2 \psi\|_{H^1} \|\phi_{zz}\| + \|\phi_z\|_{H^1} \|\nabla_x \phi_z\| \\
&\hspace{20mm} + \|S_z\|_{H^1} \|\nabla_x \phi_z\| + \|\nabla_x \phi\|_{H^1}\|\phi_{zz}\| + \|\nabla_x \phi\|_{H^1} \|S_{zz}\| \Big) \\[1mm]
&\quad + C\|\psi_{zz}\| \Big(\|S \tilde v_{x_1}\|_{L^\infty} \|\phi_{zz}\| + \|\tilde v_{x_1}\|_{L^\infty} \|\phi_z\|_{H^1} \|S_z\|_{H^1} + \|\tilde v_{x_1} \|_{L^\infty} \|S_{zz}\| \\
&\hspace{20mm} + \|\tilde v_{x_1} \|_{L^\infty} \|(vF)_{zz}\| +\|\psi_z \|_{H^1} \|\nabla_x \psi_z\|_{H^1} +  \|\nabla_x \psi\|_{H^1} \|\psi_{zz}\|_{H^1} \Big) \\[1mm]
&\quad + C\|\psi_{zz}\|_{H^1} \|\phi_z\|_{H^1}\|S_z\|_{H^1}\|\nabla_x \phi\|_{H^1}\\[2mm]
&\le C(\varepsilon + \delta^2) \big(\|\nabla_x \psi\|_{H^2}^2  + \|\nabla_x \phi_z\|^2 + (\mathcal{G}^s + \delta \mathcal{G}^3) + \|\phi_z\|^2 \big)
\end{aligned}
\eeq
Consequently, we get the following
\begin{align}
\sum_{i=1}^5 |J_i| \le C(\varepsilon, \delta^2) \Big(\mathcal{G}^s + \delta \mathcal{G}_3 + \mathcal{D} + \|\nabla_x \psi\|_{H^2}^2 + \|\nabla_x^2 \phi\|^2 \Big) \label{eq:214}
\end{align}

\noindent $\bullet$ \textbf{Estimate of $\partial_{x_1} \partial_{z} \phi$ :} 
We can get below system by taking $\partial_{x_1} \partial_{z}$ to $\eqref{eq:peq}_1$ and $\partial_{z}$ to the first component of $\eqref{eq:peq}_2$.
\begin{equation}
\begin{aligned}
\partial_t \phi_{x_1z}
+ \mathbf{u}\cdot\nabla_x \phi_{x_1z}
&= v\,\psi_{1x_1x_1z} + R
\\[1mm] \label{eq:215}
(2\mu + \lambda)\, v\,\psi_{1x_1x_1z} - v p'(v)\phi_{x_1z}&= T
\end{aligned}
\end{equation}
where $R$ and $T$ in this lemma are defined by
\begin{equation}
\begin{aligned}\nonumber
R &:=
 \phi_{z}\,\operatorname{div}_x \psi_{x_1}
+ v_{x_1}\,\operatorname{div}_x \psi_{z}
+ \phi_{x_1z}\,\operatorname{div}_x \psi
- (vF)_{x_1z}\,(\tilde v)_{x_1} \\[0.5mm]
&\quad - \psi_{z}\cdot\nabla_x \phi_{x_1}
- \mathbf{u}_{x_1}\cdot\nabla_x \phi_{z}
- \psi_{x_1z}\cdot\nabla_x \phi
+ v\, \operatorname{div}_{x'} \psi_{x_1z} \\[2mm]
T &:= v \big(\mu \Delta_{x'} \psi_{1z} + (\mu + \lambda) \operatorname{div}_{x'} \psi_{x_1z}'\big)
+ \phi_{z}\Bigl(\mu \Delta_x \psi_1
+ (\mu+\lambda)\operatorname{div}_x \psi_{x_1}\Bigr) \\[0.5mm]
& \quad - \partial_t \psi_{1z}
- \mathbf{u}\cdot\nabla_x \psi_{1z}
- \Bigl(v\bigl(p'(v)-p'(\tilde v)\bigr)\Bigr)_{z}\,(\tilde v)_{x_1} \\[0.5mm]
&\quad - (vF)_{z}\,\tilde u_{1x_1}
- \bigl(vp'(v)\bigr)_{z}\, \phi_{x_1}
- \psi_{z}\cdot\nabla_x \psi_1
\end{aligned}
\end{equation}
Multiplying $(2\mu + \lambda) \phi_{x_1z}$ to $(\refeq{eq:215})_1$, $\phi_{x_1z}$ to $(\refeq{eq:215})_2$, and adding them, we can get
\begin{equation}
\begin{aligned}\nonumber
&\frac{(2\mu+\lambda)}{2}\,\frac{d}{dt}\int_{\Omega}|\phi_{x_1z}|^2 \, dx
 - \big((2\mu+\lambda)\,\frac{u_{-}}{2}\big)\, \int_{\mathbb{T}^{2}} |\phi_{x_1z}\big|_{\{x_1 = 0\}}|^2\, dx'
- \int_{\Omega} v p'(v)\,|\phi_{x_1z}|^2 \, dx  = \sum_{j=1}^3 K_j .
\end{aligned}
\end{equation}
where each $K_j$ denotes the term whose integrand is given below.
\begin{equation}
\begin{aligned}\nonumber
K_1 &:= -\frac{1}{2}(2\mu+\lambda)\int_{\Omega} \,\operatorname{div}_x\mathbf{u}\,|\phi_{x_1z}|^2 \, dx \\[2mm]
K_2 &:= (2\mu + \lambda) \int_{\Omega} R\, \phi_{x_1z} \, dx, \quad
K_3 :=  \int_{\Omega} T\, \phi_{x_1z} \, dx
\end{aligned}
\end{equation} 
$K_1$ can be treated by exactly same way as in \eqref{eq:div}
\beq
\begin{aligned}\nonumber
|K_1|
&\le C(\varepsilon + \delta^2)\big( \|\phi_{x_1z}\|^2 + \|\nabla_x^2 \operatorname{div} \psi\|^2 \big)
\end{aligned}
\eeq
To estimate $K_2$ and $K_3$, let us first rewrite $R$ and $T$ as follows.
\beq
\begin{aligned} \nonumber
R &=
 \phi_{z}\,\operatorname{div}_x \psi_{x_1}
+ \phi_{x_1}\,\operatorname{div}_x \psi_{z} + \operatorname{div}_x \psi_z \tilde v_{x_1}
+ \phi_{x_1z}\,\operatorname{div}_x \psi
- (vF)_{x_1z}\,(\tilde v)_{x_1} \\[2mm]
&\quad - \psi_{z}\cdot\nabla_x \phi_{x_1}
- \psi_{x_1}\cdot\nabla_x \phi_{z} - \nabla_x \phi_z \cdot \mathbf{\tilde u}_{x_1}
- \psi_{x_1z}\cdot\nabla_x \phi
+ v\, \operatorname{div}_{x'} \psi_{x_1z} \\[2mm]
T &= v \big(\mu \Delta_{x'} \psi_{1z} + (\mu + \lambda) \operatorname{div}_{x'} \psi_{x_1z}'\big)
+ \phi_{z}\Bigl(\mu \Delta_x \psi_1
+ (\mu+\lambda)\operatorname{div}_x \psi_{x_1}\Bigr) \\[0.5mm]
& \quad - \partial_t \psi_{1z}
- \mathbf{u}\cdot\nabla_x \psi_{1z}
- \phi_z S \tilde v_{x_1} - vS_z \tilde v_{x_1} - (vF)_{z}\,\tilde u_{1x_1}
- p'(v) \phi_z \phi_{x_1} - vS_z \phi_{x_1} - \psi_{z}\cdot\nabla_x \psi_1
\end{aligned}
\eeq
Therefore, by Lemmas~\ref{lem:shock-est}, \ref{GNIsevera}, and \ref{L: estiF}, we can bound the $L^2$-norms of $R$ and $S$ as follows.
\beq
\begin{aligned}\nonumber
\|R\| \le & \, C\bigg(\|\phi_z\|_{H^1} \|\operatorname{div}_x \psi_{x_1}\|_{H^1} + \|\phi_{x_1}\|_{H^1} \|\operatorname{div}_x \psi_z\|_{H^1} + \|\tilde v_{x_1}\|_{L^\infty} \|\operatorname{div}_x \psi_z\|\\
&\quad + \|\operatorname{div}_x \psi\|_{H^2} \|\phi_{x_1z}\| 
 + \|\tilde v_{x_1}\|_{L^\infty} \|(vF)_{x_1z}\| + \|\psi_z\|_{H^2} \|\nabla_x \phi_{x_1}\| \\
&\quad+ \|\psi_{x_1}\|_{H^2} \|\nabla_x \phi_z\| + \|\mathbf{\tilde u}_{x_1}\|_{L^\infty} \|\nabla_x \phi_z\| + \|\psi_{x_1z} \|_{H^1} \|\nabla_x \phi\|_{H^1} + \|\operatorname{div}_{x'} \psi_{x_1z}\|\bigg) \\
&\quad+ C\,\|\operatorname{div}_{x'} \psi_{x_1z}\| \\[2mm]
\le& \, C(\varepsilon + \delta^2) \big(\|\nabla_x \phi\|_{H^1} + \|\operatorname{div}_x \psi_z\| 
 + \|\nabla_x^3 \psi\| +  \|\psi_{x_1z} \| \big) + C\,\|\operatorname{div}_{x'} \psi_{x_1z}\| \\[4mm]
\|S\| \le & \, C\bigg(\|\psi_{1zzz}\| + \|\psi'_{x_1zz}\| + \|\phi_z\|_{H^1}\|\nabla_x^2 \psi\|_{H^1} + \|\psi_{zt}\|\\
&\quad + \|\nabla_x \psi_{1z}\| + \|S\tilde v_{x_1}\|_{L^\infty} \|\phi_z\| + \|\tilde v_{x_1}\|_{L^\infty} \|S_z\| \\
&\quad+ \|\phi_z\|_{H^1} \|\phi_{x_1}\|_{H^1} + \|S_z\|_{H^1} \|\phi_{x_1}\|_{H^1} + \|\psi_z\|_{H^1} \|\nabla_x \psi_1 \|_{H^1} \bigg) \\[2mm]
\le & \, C(\varepsilon + \delta^2) \big(\|\nabla_x^2 \psi\|_{H^1}  + \|\phi_z\|_{H^1} + \|S_z\|_{H^1} + \|\psi_z\|_{H^1} \big) \\[2mm]
&\quad + C\, \big(\|\psi_{1zzz}\| + \|\psi'_{x_1zz}\| + \|\psi_{zt}\| + \|\nabla_x \psi_{1z}\| \big)
\end{aligned}
\eeq
The required $L^2$-estimates for $\operatorname{div}_{x'}\psi_{x_1z}$, $\psi_{1zzz}$, $\psi'_{x_1zz}$, $\psi_{zt}$, and $\nabla_x\psi_z$ follow from \eqref{eq:divxzzg}, Lemma~\ref{L:22}, and Lemma~\ref{L:psiH2}. Hence, $K_2$ and $K_3$ can be estimated as before. By the Young's inequality,
\beq
\begin{aligned}\nonumber
|K_2| \le C\|R\|\|\phi_{x_1z}\| &\le C(\varepsilon + \chi + \delta^2) \big(\|\nabla_x \phi\|_{H^1} + \|\operatorname{div}_x \psi_z\| 
 + \|\nabla_x^3 \psi\| +  \|\psi_{x_1z} \| \big) \\[0.5mm]
&\quad+ C_{\chi}\,\|\operatorname{div}_{x'} \psi_{x_1z}\| \\[2mm]
|K_3| \le C\|S\|\|\psi_{x_1z}\| &\le C(\varepsilon + \chi + \delta^2) \big(\|\nabla_x^2 \psi\|_{H^1}  + \|\phi_z\|_{H^1} + \|S_z\|_{H^1} + \|\psi_z\|_{H^1} \big) \\
&\quad + C_{\chi}\, \big(\|\psi_{1zzz}\| + \|\psi'_{x_1zz}\| + \|\psi_{zt}\| + \|\nabla_x \psi_{1z}\| \big)
\end{aligned}
\eeq
Hence, we can obtain the following.
\begin{align}
\sum_{i=1}^3 |K_i| \le C(\varepsilon, \delta^2, \chi) \Big(\|\nabla_x^2 \phi\|^2 + \|\nabla_x \psi\|_{H^2}^2  \Big) + C\Big(\|\phi_{x_1z}\|^2 + \|\nabla_x \psi_{1z}\|^2 + \|\partial_t \psi_{1z}\|^2 + \|\nabla_x \psi_{zz}\|^2 \Big) \label{eq:216}
\end{align}
$\bullet$ \textbf{$L^2$- estimate of $\partial_{z}^2 \phi$ :}
Taking $\partial_{z}$ to the second or third component of $\eqref{eq:peq}_2$, we can get the following.
\begin{equation}
\begin{aligned} \label{eq:313}
v p'(v) \phi_{zz} - \mu v \psi'_{x_1x_1z}
&= \mu v\,\Delta_{x'} \psi'_{z}
+ (\mu+\lambda)\,v\, \operatorname{div}_x \psi_{zz}
+ \phi_{z}\Bigl(\mu \Delta_x \psi'
+ (\mu+\lambda)\operatorname{div}_x \psi_{z}\Bigr) \\
&\quad
- \bigl(vp'(v)\bigr)_{z}\phi_{z}
- \psi_{z}\cdot\nabla_x \psi'
- \partial_t \psi'_{z}
- \mathbf{u}\cdot\nabla_x \psi'_{z} \\[2mm]
& := R
\end{aligned}
\end{equation}
By periodicity in the transverse variables, transverse derivatives can be freely transferred between \(\psi'\) and \(\phi\) in the cross term \(\psi'_{x_1x_1z}\phi_{zz}\). Hence, for any \(\chi>0\), we obtain
\beq
\begin{aligned} \nonumber
\int_{\Omega} \phi_{zz} \psi'_{x_1x_1z} \, dx &= -\int_{\mathbb{T}^2} \phi_{zz} \psi'_{x_1z} \big|_{x_1 = 0} \, dx' - \int_{\Omega} \phi_{x_1zz} \psi'_{x_1z} \, dx \\[2mm]
&= -\int_{\mathbb{T}^2} \phi_{zz} \psi'_{x_1z} \big|_{x_1 = 0} \, dx' + \int_{\Omega} \phi_{x_1z} \psi'_{x_1zz} \, dx \\[2mm]
&\le \|\phi_{zz}\big|_{x_1 = 0}\|^2 + \|\psi'_{x_1z} \big|_{x_1 = 0}\|^2 + \|\phi_{x_1z}\|^2 + \|\psi'_{x_1zz}\|^2 \\[2mm]
&\le \|\phi_{zz}\big|_{x_1 = 0}\|^2 + C_{\chi} \|\psi'_{x_1z}\|^2 + \chi \|\psi'_{x_1x_1z}\|^2 + \|\phi_{x_1z}\|^2 + \|\psi'_{x_1zz}\|^2
\end{aligned}
\eeq
To handle the boundary term in the last line, we use \eqref{eq:treatbd}.

By Lemmas \ref{GNIsevera} and \ref{L: estiF}, and the relation \eqref{eq:baro1}, the right-hand side of \eqref{eq:313} is bounded as follows.
\beq
\begin{aligned} \nonumber
\|R\| &\le C\bigg[\|\psi'_{zt}\| + \|\nabla_x \psi'_{z}\| + \|\psi_{z}\|_{H^1} \|\nabla_x \psi'\|_{H^1} + \|\big(vp'(v)\big)_{z}\|_{H^1} \|\phi_{z}\|_{H^1} \\[1mm]
&\qquad+ \|\Delta_{x'} \psi'_{z}\| + \|\operatorname{div}_x \psi_{zz}\| + \|\phi_{z}\|_{H^1} \|\nabla_x^2 \psi\|_{H^1} \bigg]\\[2mm]
&= C\bigg[\|\psi'_{zt}\| + \|\nabla_x \psi'_{z}\| + \|\psi_{z}\|_{H^1} \|\nabla_x \psi'\|_{H^1} + \|\big(\gamma^2 v^{-\gamma-1} \phi_z)\|_{H^1} \|\phi_{z}\|_{H^1}\\[1mm]
&\qquad + \|\Delta_{x'} \psi'_{z}\| + \|\operatorname{div}_x \psi_{zz}\| + \|\phi_{z}\|_{H^1} \|\nabla_x^2 \psi\|_{H^1} \bigg] \\[2mm]
&\le C\varepsilon\big(\|\psi_z\|_{H^2} + \|\phi_z\|_{H^1}\big) + C\big(\|\psi'_{zt}\| + \|\nabla_x \psi'_{z}\| + \|\Delta_{x'} \psi'_{z}\| + \|\operatorname{div}_x \psi_{zz}\| \big)
\end{aligned}
\eeq
Therefore, by squaring both sides of \eqref{eq:313} and integrating, and using the results obtained above, we obtain the following estimate
\beq
\begin{aligned} \nonumber
\|\phi_{zz}\|^2 + \|\psi'_{x_1x_1z}\|^2 &= 2\mu \int_{\Omega} v^2 p'(v) \phi_{zz} \psi'_{x_1x_1z} \, dx + \int_{\Omega} R^2 \, dx \\[2mm]
&\le C \big(\|\phi_{zz}\big|_{x_1 = 0}\|^2 + C_{\chi} \|\psi'_{x_1z}\|^2 + \chi \|\psi'_{x_1x_1z}\|^2 + \|\phi_{x_1z}\|^2 + \|\psi'_{x_1zz}\|^2 \big)\\
&\quad +C\varepsilon^2\big(\|\psi_z\|_{H^2}^2 + \|\phi_z\|_{H^1}^2\big) + C\big(\|\psi'_{zt}\|^2 + \|\nabla_x \psi'_{z}\|^2 + \|\nabla_x \psi_{zz}\|^2 \big)
\label{eq:218}
\end{aligned}
\eeq
Therefore, choosing $\chi > 0$ sufficiently small, we obtain the following $L^2$-estimates.
\beq
\begin{aligned}\label{eq:316}
\|\phi_{zz}\|^2 + \|\psi'_{x_1x_1z}\|^2 &\le C_{\chi} \big(\|\psi'_{zt}\|^2 + \|\nabla_x^2 \psi\|^2 +  \|\nabla_x \psi_{zz}\|^2 + \|\phi_{zz}\big|_{x_1 = 0}\|_{L^2(\mathbb{T}^2)}^2 \big)\\[1mm]
&\quad + C(\varepsilon + \chi) \big(\|\nabla_x^3 \psi\|^2 + \|\nabla_x^2 \phi\|^2 \big)
\end{aligned}
\eeq

\noindent $\bullet$ \textbf{$L^2$-norm estimate of $\partial_{x_1}^2 \phi$ :}
As before, taking $\partial_{x_1}^2$ to $\eqref{eq:peq}_1$ and $\partial_{x_1}$ to the first component of $\eqref{eq:peq}_2$, we obtain the following.
\begin{equation}
\begin{aligned} \label{eq:219}
\partial_t \phi_{x_1x_1}
+ \mathbf{u}\cdot \nabla_x \phi_{x_1x_1}
&= v\,\operatorname{div}_x \psi_{x_1x_1} + R
\\[1mm]
(\mu+\lambda)\,v\,\operatorname{div}_x \psi_{x_1x_1} - v p'(v)\,\phi_{x_1x_1} &= T
\end{aligned}
\end{equation}
where
\begin{equation}
\begin{aligned}\nonumber
R &:= \dot X(t)\,(\partial_{x_1}^3\tilde v)
- (vF)\,(\partial_{x_1}^3\tilde v)
- 2\mathbf{u}_{x_1}\cdot\nabla_x \phi_{x_1}
- \mathbf{u}_{x_1x_1}\cdot\nabla_x \phi \\[1mm]
&\quad - 2(vF)_{x_1}\,\tilde v_{x_1x_1}
- (vF)_{x_1x_1}\,(\tilde v)_{x_1}
+ 2v_{x_1}\,\operatorname{div}_x \psi_{x_1}
+ v_{x_1x_1}\,\operatorname{div}_x \psi \\[2mm]
T &:= 
\partial_t \psi_{1x_1}
+ \mathbf{u}\cdot \nabla_x \psi_{1x_1}
+ v\bigl( p'(v) - p'(\tilde v) \bigr)\,(\tilde v)_{x_1x_1}
\\[1mm]
&\quad - \dot X(t)\,(\tilde u_1)_{x_1x_1}
+ (vF)\,(\tilde u_1)_{x_1x_1}
-\mu v\,\Delta_x \psi_{1x_1} 
+ \mathbf{u}_{x_1}\cdot \nabla_x \psi_{1}
+ (vF)_{x_1}\,(\tilde u_1)_{x_1}
\\[1mm]
&\quad + (vp'(v))_{x_1}\,\phi_{x_1}
+ \Bigl(v\bigl(p'(v)-p'(\tilde v)\bigr)\Bigr)_{x_1}\,(\tilde v)_{x_1}
 - \mu v_{x_1}\,\Delta_x \psi_1
- (\mu+\lambda)\,v_{x_1}\,\operatorname{div}_x \psi_{x_1}
\end{aligned}
\end{equation}
Similarly, multiplying $(\refeq{eq:219})_1$ by $(\mu + \lambda) \phi_{x_1x_1}$ and $(\refeq{eq:219})_2$ by $\phi_{x_1x_1}$, and then adding them together, we get the following:
\begin{equation}
\begin{aligned}\nonumber
&\frac{(\mu+\lambda)}{2}\frac{d}{dt} \int_{\Omega} |\phi_{x_1x_1}|^2 \, dx 
 -\big((\mu +\lambda)\frac{u_{-}}{2}\big) \int_{\mathbb{T}^{2}} |\phi_{x_1x_1}\big|_{\{x_1 = 0\}}|^2 \, dx'
- \int_{\Omega} v p'(v)\,|\phi_{x_1x_1}|^2 \, dx\\[2mm]
&\quad
=(\mu+\lambda)\int_{\Omega} \phi_{x_1x_1}\, R \, dx
+\int_{\Omega} \phi_{x_1x_1}\, T \, dx
- (\mu + \lambda) \int_{\Omega} \operatorname{div}_x \mathbf{u} |\phi_{x_1x_1}|^2 \, dx \\[2mm]
&\quad := M_1 + M_2 + M_3.
\end{aligned}
\end{equation}
To estimate $M_1$ and $M_2$, let us first estimate $R$ and $T$. Using the relations $\mathbf{u} = \psi + \mathbf{\tilde u}$ and $v = \phi + \tilde v$, we can rewrite $R$ and $T$ as follows:
\begin{equation}
\begin{aligned}\nonumber
R &= \dot X(t)\,(\partial_{x_1}^3\tilde v)
- (vF)\,(\partial_{x_1}^3\tilde v)
- 2\psi_{x_1}\cdot\nabla_x \phi_{x_1} - 2 \mathbf{\tilde u}_{x_1} \cdot \nabla_x \phi_{x_1}
- \psi_{x_1x_1}\cdot\nabla_x \phi - \mathbf{\tilde u}_{x_1x_1} \cdot \nabla_x \phi \\[1mm]
&\quad - 2(vF)_{x_1}\,\tilde v_{x_1x_1}
- (vF)_{x_1x_1}\,\tilde v_{x_1}
+ 2\phi_{x_1}\,\operatorname{div}_x \psi_{x_1} + 2\tilde v_{x_1}\,\operatorname{div}_x \psi_{x_1}
+ \phi_{x_1x_1}\,\operatorname{div}_x \psi + \tilde v_{x_1x_1}\,\operatorname{div}_x \psi
\end{aligned}
\eeq
and
\beq
\begin{aligned}\nonumber
T &:= 
\partial_t \psi_{1x_1}
+ \mathbf{u}\cdot \nabla_x \psi_{1x_1}
+ vS\,\tilde v_{x_1x_1}
 - \dot X\,\tilde u_{1x_1x_1}
+ (vF)\,\tilde u_{1x_1x_1}
-\mu v\,\Delta_x \psi_{1x_1} 
\\[1mm]
&\quad
+ \psi_{x_1}\cdot \nabla_x \psi_{1} + \mathbf{\tilde u}_{x_1}\cdot \nabla_x \psi_{1}
+ (vF)_{x_1}\,\tilde u_{x_1}
 + (vp'(v))_{x_1}\,\phi_{x_1}
+ \phi_{x_1}S\,\tilde v_{x_1} + \tilde v_{x_1}S\,\tilde v_{x_1} + vS_{x_1}\,\tilde v_{x_1}
\\[1mm]
&\quad - \mu \phi_{x_1} \,\Delta_x \psi_1 -\mu \tilde v_{x_1}\,\Delta_x \psi_1
- (\mu+\lambda)\,\phi_{x_1}\,\operatorname{div}_x \psi_{x_1} 
- (\mu+\lambda)\,\tilde v_{x_1}\,\operatorname{div}_x \psi_{x_1}.
\end{aligned}
\eeq
By the Lemmas \ref{infty interpolation inequality lemma} and \ref{L: estiF}, we can estimate this as follows.
\begin{equation}
\begin{aligned}\nonumber
\|R\| &\le C\bigg(|\dot X| \|\partial_{x_1}^3 \tilde v \| + \|(vF)(\partial_{x_1}^3 \tilde v)\| + \|\psi_{x_1}\|_{L^\infty}\|\nabla_x \phi_{x_1}\| + \|\mathbf{\tilde u}_{x_1}\|_{L^\infty}\|\nabla_x \phi_{x_1} \| \\[1mm]
&\qquad + \|\psi_{x_1x_1}\|_{H^1}\|\nabla_x \phi\|_{H^1} + \|\mathbf{\tilde u}_{x_1x_1}\|_{L^\infty} \|\nabla_x \phi_{x_1} \| + \|\tilde v_{x_1x_1}\|_{L^\infty} \|(vF)_{x_1}\| \\[1mm]
&\qquad + \|\tilde v_{x_1}\|_{L^\infty} \|(vF)_{x_1x_1}\| + \|\phi_{x_1}\|_{H^1} \|\operatorname{div}_x \psi_{x_1}\|_{H^1} + \|\tilde v_{x_1}\|_{L^\infty} \|\operatorname{div}_x \psi_{x_1}\|\\[1mm]
&\qquad + \|\phi_{x_1x_1}\| \|\operatorname{div}_x \psi\|_{L^\infty} + \|\tilde v_{x_1x_1}\|_{L^\infty} \|\operatorname{div}_x \psi\| \bigg) \\[2mm]
& \le C \bigg(\delta^{7/2} |\dot X| + \delta^2 \|(vF)\tilde v_{x_1}\| + \varepsilon \|\nabla_x^2 \psi_{x_1}\| + \varepsilon \|\nabla_x \phi_{x_1}\| + \delta^2 \|\nabla_x \phi_{x_1}\|\\[1mm]
&\qquad + \varepsilon\|\nabla_x \phi\|_{H^1} + \delta^3 \|\nabla_x \phi_{x_1}\| + \delta^3 \|(vF)_{x_1}\| + \delta^2 \|(vF)_{x_1x_1}\| + \varepsilon\|\phi_{x_1}\|_{H^1}\\[1mm]
&\qquad + \delta^2 \|\operatorname{div}_x \psi_{x_1}\| + \varepsilon \|\phi_{x_1x_1}\| + \varepsilon \|\operatorname{div}_x \psi\|_{H^2} + \delta^3 \|\operatorname{div}_x \psi\|\bigg) \\[2mm]
& \le C \delta^{7/2} |\dot X| + C(\varepsilon + \delta^2) \big((\mathcal{G}^s + \delta \mathcal{G}_3)^{1/2} + \|\nabla_x \phi\|_{H^1} + \|\nabla_x \psi\|_{H^2} \big)
\end{aligned}
\eeq
and,
\beq
\begin{aligned} \nonumber
\|T\|
&\le C\bigg(\|\partial_t \psi_{1x_1}\| + \|\nabla_x \psi_{1x_1}\| + \delta \|S\,\tilde v_{x_1}\|
 + \delta^{7/2}|\dot X| + \delta \|(vF)\,\tilde u_{x_1}\| + \|\Delta_x \psi_{1x_1}\| 
\\[1mm]
&\qquad
+ \|\psi_{x_1}\|_{H^1} \|\nabla_x \psi_{1}\|_{H^1} + \delta^2 \|\nabla_x \psi_{1}\|
+ \delta^2 \|(vF)_{x_1}\|
 + \|(vp'(v))_{x_1}\|_{H^1}\|\phi_{x_1}\|_{H^1}\\[1mm]
&\qquad
+ \varepsilon \delta^2\|\phi_{x_1}\| + \delta^2 \|S\,\tilde v_{x_1}\| + \delta^2 \|S_{x_1}\|
+ \|\phi_{x_1}\|_{H^1} \|\Delta_x \psi_1\|_{H^1} \\[1mm]
&\qquad + \delta^2 \|\Delta_x \psi_1\|
+\|\phi_{x_1}\|_{H^1}\|\operatorname{div}_x \psi_{x_1}\|_{H^1} + \delta^2 \|\operatorname{div}_x \psi_{x_1} \| \bigg) \\[2mm]
&\le C\delta^{7/2} |\dot X| + C\big(\|\partial_t \psi_{1x_1}\| + \|\nabla_x \psi_{1x_1}\|+ \|\Delta_x \psi_{1x_1}\| \big) \\[1mm]
&\quad + C(\varepsilon + \delta^2) \big((\mathcal{G}^s + \delta \mathcal{G}_3)^{1/2} + \|\nabla_x \phi\|_{H^1} + \|\nabla_x \psi\|_{H^2} \big).
\end{aligned}
\end{equation}
Consequently, we can estimate $M_2$ and $M_3$ as follows:
\beq
\begin{aligned} \nonumber
|M_1| \le C\|\phi_{x_1x_1}\| \|R\| &\le  C \delta^{2} |\dot X|^2 + C(\varepsilon + \delta^2) \big((\mathcal{G}^s + \delta \mathcal{G}_3) + \|\nabla_x \phi\|_{H^1}^2 + \|\nabla_x \psi\|_{H^2}^2 \big), \\[2mm]
|M_2| \le C\|\psi_{x_1x_1}\| \|T\| 
&\le C\delta^{2} |\dot X|^2 + C\big(\|\partial_t \psi_{1x_1}\|^2 + \|\nabla_x \psi_{1x_1}\|^2+ \|\Delta_x \psi_{1x_1}\|^2 \big) \\[1mm]
&\quad + C(\varepsilon + \delta^2) \big((\mathcal{G}^s + \delta \mathcal{G}_3) + \|\nabla_x \phi\|_{H^1}^2 + \|\nabla_x \psi\|_{H^2}^2 \big).
\end{aligned}
\eeq
Estimates of $M_3$ can be derived via standard manner as in \eqref{eq:div}.
$$|M_3| \le C(\varepsilon + \delta^2)\big(\|\phi_{x_1x_1}\|^2 + \|\nabla_x^2 \operatorname{div}_x \psi\|^2 \big)$$
Combining \eqref{eq:214}, \eqref{eq:216}, and \eqref{eq:316} together with the above results, we obtain the Lemma.
\end{proof}
\subsection{$H^2$ estimate of $\mathbf{u} - \tilde{\mathbf{u}}$}Finally, through the computations below, we close the $H^2$ estimate for the $\mathbf{u}$-perturbation.
\begin{lemma}Under the \textit{a priori} assumption, there exists a constant
\(C>0\) independent of \(\varepsilon\), \(\delta\), \(\chi\) and \(T\), such that
for all \(t \in [0,T]\), it holds \label{L:24}
\[
\begin{aligned}
&\sup_{0 \le t \le T} \|(\phi,\psi)(t)\|_{H^2}^2
+ \int_0^t \Bigl(
 \delta |\dot X(s)|^2
+ \mathcal G^s(s) + \mathcal G_3(s) + \mathcal D(s)
+ \|\nabla_x \psi\|_{H^2}^2
+ \|\nabla_x^2 \phi\|^2 \\
&\hspace{45mm} + \|(\phi_t, \psi_t)\|^2
+ \|\nabla_x \phi\big|_{x_1=0}\|^2
+ \|\nabla_x^2 \phi\big|_{x_1=0}\|^2
\Bigr)\,ds \\[1mm]
&\qquad \le C\|(\phi_0,\psi_0)\|_{H^2}^2
+ C e^{-C\delta\beta} + \int_0^t \|\nabla_x \psi_t \|^2 \, ds.
\end{aligned}
\]
\end{lemma}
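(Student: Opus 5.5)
The plan is to recover the missing third-order dissipation of $\psi$, namely $\|\nabla_x^3\psi\|^2$ (equivalently $\|\nabla_x^2\operatorname{div}_x\psi\|^2$ and $\|\partial_{x_1}^3\psi\|^2$), and the missing top-order sup-norm $\|\nabla_x^2\psi\|^2$ from the momentum equation $\eqref{eq:peq}_2$ viewed as an elliptic (Lamé) system in $x$. Then Lemma~\ref{L:psiH2} and Lemma~\ref{L:23} are combined so that every term with the small factor $(\varepsilon+\delta^2+\chi)$ is absorbed. The remaining term $\int_0^t\|\nabla_x\psi_t\|^2$ is the one deliberately left on the right-hand side; it will be closed later with Lemma~\ref{L:22}.

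\textbf{Step 1: pure normal third derivatives.} I would differentiate $\eqref{eq:peq}_2$ once in $x_1$ and solve for the pure normal part. For the first component this gives
\[
(2\mu+\lambda)v\,\psi_{1x_1x_1x_1}=\psi_{1x_1t}+vp'(v)\phi_{x_1x_1}+(\text{mixed terms with at least one }z\text{-derivative})+\text{l.o.t.}
\]
For the transverse components, $\mu v\,\psi'_{x_1x_1x_1}$ is written the same way. Squaring and integrating, then using Lemma~\ref{lem:shock-est} and Lemma~\ref{L: estiF} for the profile and $F,S$ terms, and Lemma~\ref{GNIsevera} for the nonlinear products, I expect
\[
\|\partial_{x_1}^3\psi\|^2\le C\big(\|\nabla_x\psi_t\|^2+\|\nabla_x^2\phi\|^2+\|\nabla_x\psi_{zz}\|^2+\|\psi'_{x_1x_1z}\|^2+\|\nabla_x\psi_{1z}\|_{H^1}^2+\mathcal D+\mathcal G^s+\mathcal G_3+\delta^2|\dot X|^2\big)+C(\varepsilon+\delta^2)\|\nabla_x\psi\|_{H^2}^2 .
\]
All mixed third derivatives containing a $z$ are already controlled by Lemma~\ref{L:23} through $\|\nabla_x\psi_{zz}\|^2$ and $\|\psi'_{x_1x_1z}\|^2$. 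The one exception is $\psi_{1x_1x_1z}$, which I would obtain by differentiating the first-component equation in $z$, exactly as in \eqref{eq:215}, and using the already-estimated $\phi_{x_1z}$. This controls $\|\nabla_x^3\psi\|^2$ pointwise in time, with $\|\nabla_x\psi_t\|^2$ appearing with an $O(1)$ constant. This step uses only the equation, not an energy identity, so no normal boundary traces of third derivatives arise. That is why the normal direction is treated this way.

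\textbf{Step 2: sup-in-time bound on $\nabla_x^2\psi$.} The same elliptic relation without the extra $x_1$-derivative expresses $\partial_{x_1}^2\psi$ through $\psi_t$, $\nabla_x\phi$, tangential second derivatives and lower-order terms. Taking the supremum in time gives
\[
\sup_t\|\nabla_x^2\psi\|^2\le C\sup_t\big(\|\psi_t\|^2+\|\nabla_x\phi\|^2+\|\partial_z\nabla_x\psi\|^2\big)+\text{small}.
\]
Here $\|\partial_z^2\psi\|$ comes from Lemma~\ref{L:23}. The term $\|\psi_{x_1z}\|$ I would bound by interpolation between $\|\psi_z\|$ and $\|\psi_{zz}\|,\|\psi_{x_1x_1}\|$ via periodicity in $z$, with Young's inequality. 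The term $\sup_t\|\psi_t\|^2$ is bounded through the equation by $H^2$ data of the solution. At the initial time it is estimated by $\|(\phi_0,\psi_0)\|_{H^2}$, and at later times it is reabsorbed by the smallness $\varepsilon$.

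\textbf{Step 3: closing.} I would add Lemma~\ref{L:23}, Lemma~\ref{L:psiH2}, and a small multiple $\kappa$ of the time integral of the Step~1 inequality. The term $C(\varepsilon+\delta^2+\chi)\int\|\nabla_x^3\psi\|^2$ is absorbed by choosing $\varepsilon,\delta,\chi$ small after fixing $\kappa$. The $\delta^2|\dot X|^2$ terms are absorbed into $\delta|\dot X|^2$ from Lemma~\ref{L:psiH2}. The dissipation $\|(\phi_t,\psi_t)\|^2$ is supplied by Lemma~\ref{L:psiH2} and \eqref{eq:7}. The boundary traces are inherited from Lemma~\ref{L:23}.

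\textbf{Main obstacle.} The delicate point is bookkeeping: $\int\|\nabla_x\psi_t\|^2$ enters Step~1 and Lemma~\ref{L:23} with an $O(1)$ constant, and must not be absorbed here. It has to stay as the lone right-hand term, with coefficient normalized to one after rescaling. The closure against the $C(\varepsilon+\delta^2)\int\|\nabla_x\psi\|_{H^2}^2$ term in Lemma~\ref{L:22} is postponed to that later combination.
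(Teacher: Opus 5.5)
You have a genuine gap in Step~2, in how you control $\sup_t\|\psi_t\|^2$.

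The rest of your route matches the paper. Steps~1 and~3 are the paper's proof: $\partial_{x_1}^3\psi$ and $\psi_{1x_1x_1z}$ are recovered by solving the $x_1$- and $z$-differentiated momentum equation, and $\partial_{x_1}^2\psi$ is recovered from \eqref{eq:new} and \eqref{eq:19}. Your bound for $\sup_t\|\psi_{x_1z}\|$ via $\|\psi_{x_1z}\|^2=\int_\Omega\psi_{x_1x_1}\cdot\psi_{zz}\,dx$ is a valid substitute for the paper's energy identity (testing the $z$-differentiated momentum equation with $\rho\,\psi_{zt}$). The boundary term vanishes because $\psi_{zz}|_{x_1=0}=0$. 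Since $\int_0^t\|\nabla_x\psi_t\|^2$ is allowed on the right, you also do not need the $\int\|\psi_{zt}\|^2$ dissipation that identity gives.

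Step~2 needs $\sup_t\|\psi_t\|^2$ with an $O(1)$ constant, and your way of bounding it is circular. Solving $\eqref{eq:peq}_2$ for $\psi_t$ gives $\|\psi_t\|\le C\|\nabla_x^2\psi\|+C\|\nabla_x(\phi,\psi)\|+\cdots$. The coefficient of $\|\nabla_x^2\psi\|$ comes from the linear viscous term $v(\mu\Delta_x\psi+(\mu+\lambda)\nabla_x\operatorname{div}_x\psi)$, so it is not small, and feeding this back into Step~2 returns the quantity you are estimating. The a priori assumption only gives the absolute bound $\|\psi_t(t)\|\le C\varepsilon$. That is not a bound by $\|(\phi_0,\psi_0)\|_{H^2}^2+e^{-C\delta\beta}$, and since $\sup_t\|\psi_t\|^2$ enters with an $O(1)$ coefficient, there is nothing to reabsorb it into.

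More basically, Steps~1--2 and your interpolation are fixed-time relations, so none of them carries information forward from $t=0$. The top-order quantity $\sup_t\|\partial_{x_1}^2\psi\|^2$ needs an evolution estimate at that level. A direct energy estimate on $\partial_{x_1}\nabla_x\psi$ produces uncontrolled boundary traces of $\psi_{x_1t}$, which is why the paper goes through Lemma~\ref{L:22}. There, testing the $t$-differentiated system with $(-p'(v)\phi_t,\psi_t)$ gives $\sup_t\|\partial_t(\phi,\psi)\|^2$. The trace $\int_{\mathbb T^2}\psi_{1t}\psi_{1x_1t}|_{x_1=0}$ is handled by integrating by parts in time with \eqref{eq:convs}, and the initial value is controlled by \eqref{eq:226}.

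The paper's own proof of this lemma also leaves $\sup_t\|\psi_t\|^2$ in its bound for $\psi_{x_1x_1}$. It only closes that term when Lemmas~\ref{L:22} and~\ref{L:24} are combined in the final section. You should do the same. Keep $C\sup_t\|\psi_t\|^2$ on the right-hand side, then add a small multiple $\kappa$ of your estimate to Lemma~\ref{L:22}. The terms $C\kappa\sup_t\|\psi_t\|^2$ and $C\kappa\int\|\nabla_x\psi_t\|^2$ are then absorbed by the left side of Lemma~\ref{L:22}. Its $C(\varepsilon+\delta^2)$-weighted dissipation and $C\delta^2\int|\dot X|^2$ are absorbed by $\kappa$ times your left-hand side, once $\varepsilon,\delta\ll\kappa$.
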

\begin{proof}
$\bullet$ \textbf{Estimate of $\nabla_x \psi_{z}$ :}
After taking $\partial_{z}$ to the momentum equation, we can get
\[
\begin{aligned}
&
-v\Big(\mu \Delta_x \psi_{z}
    + (\mu + \lambda) \nabla_x \operatorname{div}_x \psi_z \Big)
+ \partial_t \psi_{z}
= R,
\end{aligned}
\]
where $R$ is defined by
\begin{align}
R &:= 
\phi_{z}\Bigl( \mu \Delta_x \psi
+ (\mu + \lambda) \nabla_x \operatorname{div}_x \psi \Bigr)
- \psi_{z} \nabla_x \psi \nonumber\\[1mm]
&\qquad - \bigl(vp'(v)\bigr)_{z} \nabla_x \phi
 - vp'(v) \nabla_x \phi_{z} 
-\mathbf{u} \, \nabla_x \psi_{z} - (vS)_{z}\nabla_x \tilde v
 - (vF)_{z}\tilde{\mathbf u}_{x_1}.\nonumber
\end{align}
After multiplying both sides by $\rho \, \psi_{zt}$ and integrating, we get this form:
\[
\begin{aligned}
&\frac12 \frac{d}{dt}\int_{\Omega} 
\bigl(\mu|\nabla_x\psi_{z}|^2+(\mu+\lambda)|\operatorname{div}_x\psi_{z}|^2\bigr)
 \, dx
+ \int_{\Omega} \rho|\psi_{zt}|^2 \, dx 
 = J_1 + J_2 + J_3.
\end{aligned}
\]
Each $J_j$ is defined as follows:
\begin{equation}
\begin{aligned}\nonumber
&J_1 := \int_{\Omega} \Bigl(
\rho \phi_{z} \bigl( \mu \Delta_x \psi
+ (\mu + \lambda) \nabla_x \operatorname{div}_x \psi \bigr) \cdot \psi_{zt}
\Bigr) \, dx, \\[1mm]
&J_2 := - \int_{\Omega}  \rho \Bigl(
\psi_{z} \nabla_x \psi + \bigl(vp'(v)\bigr)_{z} \nabla_x \phi
 + vp'(v) \nabla_x \phi_{z} 
+ \mathbf{u} \, \nabla_x \psi_{z}
\Bigr)\cdot \psi_{zt} \,\,\, dx,  \\[1mm]
&J_3 := - \int_{\Omega} \rho\bigl( (vS)_{z}\nabla_x \tilde v
 + (vF)_{z}\tilde{\mathbf u}_{x_1}
\bigr) \cdot \psi_{zt} \,\,\, dx.
\end{aligned}
\end{equation}
Using the Lemma \ref{GNIsevera} and the \textit{a priori} assumption
$\|\phi_z\|_{H^1} \le C\e $, we obtain the following:
\begin{equation}
\begin{aligned} \nonumber
|J_1| 
&\le C\, \|\phi_{z}\|_{H^1}\, \|(\Delta_x \psi + \nabla_x \operatorname{div}_x \psi)\|_{H^1}\, \|\psi_{zt}\| 
\le C\varepsilon \Big(\|\psi_{zt}\|^2 + \|\Delta_x \psi\|_{H^1}^2 + \|\nabla_x \operatorname{div}_x \psi\|_{H^1}^2 \Big).
\end{aligned}
\end{equation}
For $J_2$, using Young's inequality \eqref{eq:baro1}, Lemmas \ref{GNIsevera}, and \ref{L: estiF}, for a sufficiently small \(\chi _{0}\), we can derive the following:
\begin{equation}
\begin{aligned}\nonumber
|J_2| 
&\le C \|\psi_{zt}\| \Big( \|\psi_{z}\|_{H^1} \|\nabla_x \psi\|_{H^1} + \|(vS)_z \|_{H^1} \|\nabla_x \phi\|_{H^1} \\
&\hspace{30mm} + \|(vp'(\tilde v))_z\|_{L^\infty} \|\nabla_x \phi\| + \|\nabla_x \phi_{z}\| + \|\nabla_x \psi_{z}\| \Big) \\[1mm]
&\le C(\varepsilon + \delta^2 + \chi) \Big(\|\nabla_x \psi\|_{H^1}^2 + \|\nabla_x \phi\|_{H^1}^2 \Big) + C_{\chi} \|\psi_{zt}\|^2.
\end{aligned}
\end{equation}
The term $J_3$ can be estimated as follows. Lemmas \ref{lem:shock-est}, \ref{L: estiF}, and \ref{L:psig} yield
\begin{equation}
\begin{aligned}\nonumber
|J_3| 
&\le C\delta^2 \Big(\|\phi_{z}\|^2 + \|\psi_{1z}\|^2 + \|\psi_{zt}\|^2 \Big)
\end{aligned}
\end{equation}
Therefore, we can obtain the following inequality:
\begin{align} \nonumber
\sum_{i=1}^3 |J_i| \le C(\varepsilon+\delta^2+ \chi) \Big(\|\nabla_x \psi\|_{H^2}^2 + \|\nabla_x \phi\|_{H^1}^2 \Big) + C_\chi\, \|\psi_{zt}\|^2. \label{eq:220}
\end{align}
$\bullet$ \textbf{Estimate of $\partial_{x_1}^2 \psi$:}
Recall \eqref{eq:new}:
$$\|\psi_{1x_1x_1}\|^2 \le C\big(\|\psi_{1t}\|^2 + \|\nabla_x \psi_1\|^2 + \|\nabla_x \psi_{z}\|^2 + \mathcal{D} \big) + C\delta \big(\,\mathcal{G}_3 + \mathcal{G}^s\,\big) + C\delta^2 |\dot X|^2,$$
and \eqref{eq:19}:
$$ \|\psi'_{x_1x_1}\|^2 \le C\big(\|\psi'_t\|^2 + \|\nabla_x \psi'\|^2 + \mathcal{D} + \|\nabla_x \psi_{z}\|^2 \big). $$
Combining the above two inequalities with the fact \eqref{dxbound}, we obtain the following:
\beq
\begin{aligned} \nonumber
\sup_{0 \le s \le T}\|\psi_{x_1x_1}(s)\|^2 \,
\le\;& \sup_{0 \le s \le T} C \Big(\|\psi_t(s)\|^2
 + \|\nabla_x \psi(s)\|^2
 + \|\nabla_x \phi(s)\|^2 \\
&\hspace{16mm}
 + \|\nabla_x \psi_{z}(s)\|^2
 + \|\operatorname{div}_x \psi_{z}(s)\|^2 + \delta^2 \|\phi(s)\|^2\Big).
\end{aligned}
\eeq
$\bullet$ \textbf{$L^2$-estimate of $\psi_{1x_1x_1z}$:}
Differentiating $\eqref{eq:peq}_2$ by ${z}$ and rewriting it. We can obtain the following:
\beq
\begin{aligned}
(2\mu + \lambda)v\, \psi_{1x_1x_1z}
&=\partial_t \psi_{1z}
+ \mathbf{u}\cdot\nabla_x \psi_{1z}
+ v p'(v)\phi_{x_1z}
+ (vS)_{z}(\tilde v)_{x_1}
\nonumber\\[1mm]
&\quad + (vF)_{z}\,(\tilde u_{x_1})
- \mu v\,\Delta_{x'} \psi_{1z}
- (\mu+\lambda)\,v\,\operatorname{div}_{x'} \psi_{x_1z}'
\\[1mm]
&\quad
- \phi_{z}\Bigl(\mu \Delta_x \psi_1
+ (\mu+\lambda)\operatorname{div}_x \psi_{x_1}\Bigr)
+ \bigl(vp'(v)\bigr)_{z} \phi_{x_1}
+ \psi_{z}\cdot\nabla_x \psi_1.
\end{aligned}
\eeq
Multiplying the above equation by $\frac{1}{(2\mu+\lambda)v}\psi_{1x_1x_1z}$, integrating over the spatial domain, and applying Young's inequality, Lemma~\ref{lem:shock-est}, and the \textit{a priori} assumption, we obtain
\beq
\begin{aligned}\label{eq:demo}
\int_0^t \|\psi_{1x_1x_1z}\|^2\,ds
&\le\; C \int_0^t \Bigl(
 \|\partial_t \psi_{1z}\|^2
+ \|\nabla_x \psi_{1z}\|^2
+ \|\phi_{x_1z}\|^2
+ (\varepsilon + \delta^2)\, \mathcal{D}(s) 
+ \|\nabla_x \psi_{zz}\|^2 \\
&\hspace{16mm}
+ \varepsilon\bigl(
\|\Delta_x \psi_1\|_{H^1}^2
+ \|\operatorname{div}_x \psi_{x_1}\|_{H^1}^2
+ \|\nabla_x \phi_{x_1}\|^2
+ \|\nabla_x \psi_1\|_{H^1}^2
\bigr)
\Bigr) \, ds. 
\end{aligned}
\eeq
$\bullet$ \textbf{$L^2$-estimate of $\partial_{x_1}^3 \psi$:}
We proceed with a similar argument. Differentiating the momentum equation with respect to $x_1$, we obtain the following:
\begin{equation}
\begin{aligned}\nonumber
\mu v\,\psi'_{x_1x_1x_1}
=&\;\partial_t \psi'_{x_1}
+ \mathbf{u}\cdot\nabla_x \psi'_{x_1}
+ \psi_{x_1}\cdot\nabla_x \psi'
+ \mathbf{\tilde u}_{x_1}\cdot\nabla_x \psi' \\[1.5mm]
&\; + v p'(v)\,\phi_{x_1z}
+ (vS)_{x_1}\phi_{z} 
+ \bigl(\tilde v p'(\tilde v)\bigr)_{x_1}\phi_{z}  \\[1.5mm]
&\; - \mu v\,\Delta_{x'} \psi'_{x_1}
- \mu \phi_{x_1}\bigl(\Delta_x \psi'\bigr)
- \mu \tilde v_{x_1} \,\bigl(\Delta_x \psi'\bigr) \\[1.5mm]
&\; - (\mu+\lambda)v\,\operatorname{div}_x \psi_{x_1z}
- (\mu+\lambda)\phi_{x_1}\operatorname{div}_x \psi_{z} 
- (\mu+\lambda)\tilde v_{x_1} \,\operatorname{div}_x \psi_{z}.
\end{aligned}
\end{equation}
Multiplying both sides by $\frac{1}{\mu v}\psi'_{x_1x_1x_1}$ and arguing as in \eqref{eq:demo}, we obtain
\beq
\begin{aligned}
\|\psi'_{x_1x_1x_1}\|^2 &\le C\Big(\|\partial_t \psi'_{x_1}\|^2 + \|\nabla_x \psi'_{x_1}\|^2 +  \|\phi_{x_1z}\|^2 + \|\Delta_{x'} \psi'_{x_1}\|^2  + \|\operatorname{div}_x \psi_{x_1z}\|^2 \Big) \\
&\quad + C(\varepsilon + \delta^2) \Big(\|\nabla_x \psi'\|_{H^1}^2 + \|\phi_{z}\|_{H^1}^2 + \|\Delta_x \psi'\|_{H^1}^2 + \|\operatorname{div}_x \psi_{z}\|^2\Big). \nonumber
\end{aligned}
\eeq
To obtain a good term for $\psi_{1x_1x_1x_1}$, we use the same system again, but focus on the other component.
\begin{equation}
\begin{aligned}\nonumber
\partial_t \psi_{1x_1}
&+ \mathbf{u}\cdot \nabla_x \psi_{1x_1}
+ v p'(v)\,\phi_{x_1x_1}
+ vS\,\tilde v_{x_1x_1}
- \dot X(t)\,\tilde u_{x_1x_1}
+ (vF)\,\tilde u
_{x_1x_1}
\\[2mm]
&= \mu v\,\Delta_x \psi_{1x_1}
+ (\mu+\lambda)v\,\operatorname{div}_x \psi_{x_1x_1}
- \mathbf{u}_{x_1}\cdot \nabla_x \psi_{1}
- (vF)_{x_1}\,\tilde u_{x_1}
\\
&\quad - (vp'(v))_{x_1}\,\phi_{x_1}
- (vS)_{x_1}\,\tilde v_{x_1}
+ \mu v_{x_1}\,\Delta_x \psi_1
+ (\mu+\lambda)\,v_{x_1}\,\operatorname{div}_x \psi_{x_1}
\end{aligned}
\end{equation}
Multiplying both sides by $\frac{1}{(2\mu + \lambda)v} \psi_{1x_1x_1x_1}$ and using the same method as before, we obtain the following estimate.
\beq
\begin{aligned}
\|\psi_{1x_1x_1x_1}\|^2 &\le C\Big(\|\partial_t \psi_{1x_1}\|^2 + \|\nabla_x \psi_{1x_1}\|^2 +  \|\phi_{x_1x_1}\|^2 + \|\psi'_{x_1x_1x_1}\|^2  + \|\operatorname{div}_x \psi_{x_1x_1}\|^2 \Big) \\
&\quad + C(\varepsilon + \delta^2) \Big(\|\nabla_x \psi_1\|_{H^1}^2 + \|\phi_{x_1}\|_{H^1}^2 + \|\Delta_x \psi_1\|_{H^1}^2 + \|\operatorname{div}_x \psi_{x_1}\|^2 + \mathcal{G}_3 + \mathcal{G}^s + \mathcal{D} \Big)\nonumber
\end{aligned}
\eeq
Combining all the preceding estimates yields the desired result.
\end{proof}

\section{Proof of Proposition 4.1}
Choosing sufficiently small $\chi, \e, \delta > 0$ and combining the results of {Lemma \ref{L:22}} and {Lemma \ref{L:24}}, we obtain the following:
\beq
\begin{aligned} \label{eq:225}
&\sup_{0 \le t \le T}\big( \|(\phi,\psi)(t)\|_{H^2}^2 + \|\partial_t(\phi, \psi)(t) \|^2 \big) \\
&\quad
+ \int_0^T \Bigl(
 \delta |\dot X(t)|^2
+ \mathcal{G}^s(t) + \mathcal{D}(t) + \mathcal D_v(t) + \mathcal D_u(t)
 + \|(\phi_t, \psi_t)\|^2
 + \|\nabla_x \psi_t \|^2 
 + \mathcal{B}(t)
\Bigr)\,dt \\[2mm]
&\qquad \le C\|(\phi_0,\psi_0)\|_{H^2}^2 + C \|(\phi_t , \psi_t) (0)\|^2
+ C e^{-C\delta\beta} + C\delta^2 \varepsilon^2,
\end{aligned}
\eeq
where
\begin{align*} 
&\mathcal{B} = \|\psi_{1x_1} \big|_{x_1 = 0} \|_{L^2({\mathbb{T}^2})}^2
+ \|\phi\big|_{x_1 = 0} \|_{L^2({\mathbb{T}^2})}^2
+ \|\phi_t \big|_{x_1 = 0} \|_{L^2({\mathbb{T}^2})}^2
+ \|\nabla_x \phi\big|_{x_1=0}\|_{L^2({\mathbb{T}^2})}^2
+ \|\nabla_x^2 \phi\big|_{x_1=0}\|_{L^2({\mathbb{T}^2})}^2, \\[1mm]
& \mathcal D_v(t) = \int_\Omega \|\nabla_x^2 \phi\|^2\, dx, \quad \mathcal D_u(t) = \int_\Omega \big(\|\nabla_x \psi\|^2  +\|\nabla_x^2 \psi\| ^2 + \|\nabla_x^3 \psi\|^2 \big)\, dx.
\end{align*} 
We should handle the term $\|\partial_t (\phi, \psi)(0)\|$ on the right-hand side and obtain a good term for $\nabla_x \phi$.\medskip

\noindent$\bullet$ \textbf{Controlling $\|\partial_t (\phi_0, \psi_0)\|^2$:}
By substituting $t=0$ into (\refeq{eq:peq}) and applying the $L^2$-Minkowski inequality to both sides, we obtain the following:
\beq
\left\{
\begin{aligned}
\|\partial_t \phi(0)\|^2 &\le
 \|{u}_- \phi_{x_1}(0)\|^2
 + \|\dot{X}(0)\tilde v_{x_1}(0)\|^2
 + \|(vF)(0)\,\tilde v_{x_1}(0)\|^2
 + \|v_-\psi_{1x_1}(0)\|^2,
\\[2mm]
\|\partial_t \psi(0)\|^2 &\le
  \|{u}_- \,\nabla_x \psi_1(0)\|^2
 + \|v_-p'(v_-) \nabla_x \phi(0)\|^2
 + \|(vS)(0)\nabla_x \tilde v(0)\|^2\\[1mm]
 &\quad
 + \|(vF - \dot X)(0) \,\mathbf{\tilde u}_{x_1}(0)\|^2 + \|v_-\,\big(\mu \Delta_x \psi(0)
    + (\mu + \lambda) \nabla_x \operatorname{div}_x \psi(0)\big)\|^2.
\end{aligned}
\right.
\eeq
By Lemma~\ref{lem:shock-est}, we have
$$ \|(\tv_{x_1}, \mathbf \tu_{x_1} )\|^2 \le C\delta^3 e^{-C\delta \beta} $$
On the other hand, from the definitions of $F$ and $S$ in \eqref{eq:FS}, we obtain
$$ F \sim (\phi + \psi), \quad S \sim \phi $$
Combining these two observations with \eqref{dxbound}, we obtain the following estimate:
\begin{align}
\|\partial_t (\phi, \psi)(0)\|^2 \le C\|(\phi_0, \psi_0)\|_{H^2}^2 + C\delta^3 e^{-C\delta \beta}. \label{eq:226}
\end{align}

\noindent$\bullet$ \textbf{$L^2$-norm estimate of $\nabla_x \phi_t$:}
To investigate the time-asymptotic behavior, this term is essential, and it can be directly derived from the mass equation. By applying $\nabla_x$ to $(\refeq{eq:peq})_1$, we obtain the following:
$$ \nabla_x \phi_t = \operatorname{div}_x \psi \nabla_x v + v \, \nabla_x \operatorname{div}_x \psi - \nabla_x \big(\mathbf{u} \cdot \nabla_x \phi\big) + \dot X \nabla_x \tilde v_{x_1} - \tilde v_{x_1} \nabla_x (vF) - (vF)\nabla_x \tilde v_{x_1}. $$
By multiplying both sides by $\nabla_x \phi_t$, integrating over space, and applying the Lemmas 3.6 and \ref{L: estiF}, we obtain the following:
\beq
\begin{aligned}\label{eq:227}
\|\nabla_x \phi_t \|^2 &\le C \|\nabla_x \phi_t\| \Big(\|\operatorname{div}_x \psi\|_{H^1} \|\nabla_x \phi\|_{H^1} + \|\tilde v_{x_1} \|_{L^\infty} \|\operatorname{div}_x \psi\| + \|\nabla_x \operatorname{div}_x \psi\| \\
&\hspace{23mm}  + \|\nabla_x\big(\psi\cdot\nabla_x \phi \big) \| + \|( \tilde u \phi_{x_1} )_{x_1}\| + |\dot X| \|\nabla_x \tilde v_{x_1}\| \\
&\hspace{23mm} + \|\tilde v_{x_1}\|_{L^\infty} \|\nabla_x (vF)\| + \delta \|(vF)\tilde v_{x_1}^{1/2}\| \|\tilde v_{x_1}^{1/2}\|_{L^\infty} \Big) \\
&\le C\|\nabla_x^2 (\phi, \psi)\|^2 + C(\varepsilon+\delta+\chi) \Big(\|\nabla_x (\phi, \psi)\|^2 + \|\nabla_x \phi_t \|^2 \Big) \\
&\quad +C\delta^2 \Big( |\dot X|^2 + \mathcal{G}_3 + \mathcal{G}^s \Big).
\end{aligned}
\eeq
Choosing sufficiently small $\e,\, \delta,\, \chi > 0$ and by adding \eqref{eq:226} and \eqref{eq:227} to \eqref{eq:225}, we obtain the desired result.
\bibliographystyle{amsplain}
\bibliography{reference} 
\end{document}